\documentclass[11pt,a4paper,leqno]{article}
\usepackage[T1]{fontenc}
\usepackage[utf8]{inputenc}
\usepackage[provide=*,british]{babel}
\usepackage{mlmodern}
\usepackage{microtype}
\usepackage{amsmath,amssymb,amsthm,mathtools}
\usepackage[margin=29mm]{geometry}
\usepackage{booktabs,array}
\usepackage{flafter}
\usepackage{tikz}
\usetikzlibrary{arrows.meta}
\usepackage[unicode,hidelinks]{hyperref}
\hypersetup{
  pdftitle={Sparse Moore-local realisations of binary irreducible polynomials on near-square lattice regions},
  pdfauthor={Lizhong Chen}
}
\newtheoremstyle{vintageplain}
  {0.45\baselineskip}{0.45\baselineskip}
  {\itshape}{\parindent}{\scshape}{.}{0.5em}{}
\newtheoremstyle{vintagedefinition}
  {0.45\baselineskip}{0.45\baselineskip}
  {\normalfont}{\parindent}{\scshape}{.}{0.5em}{}
\newtheoremstyle{vintageremark}
  {0.45\baselineskip}{0.45\baselineskip}
  {\normalfont}{\parindent}{\scshape}{.}{0.5em}{}
\theoremstyle{vintageplain}
\numberwithin{equation}{section}
\newtheorem{theorem}{Theorem}[section]
\newtheorem{proposition}[theorem]{Proposition}
\newtheorem{lemma}[theorem]{Lemma}
\newtheorem{corollary}[theorem]{Corollary}
\theoremstyle{vintagedefinition}
\newtheorem{definition}[theorem]{Definition}
\theoremstyle{vintageremark}

\newcommand{\F}{\mathbb F_2}
\newcommand{\one}{\mathbf1}

\newcommand{\diag}{\operatorname{diag}}
\newcommand{\coeff}{\operatorname{coeff}}
\newcommand{\ord}{\operatorname{ord}}

\newcommand{\degout}{\deg^{+}}
\newcommand{\degin}{\deg^{-}}
\DeclareMathOperator{\rank}{rank}

\allowdisplaybreaks[1]

\title{Sparse Moore-local realisations of binary\\
irreducible polynomials on near-square lattice regions}
\author{\textsc{Lizhong Chen}\\[0.4em]
  \small Department of Mathematics, The Hong Kong University of Science and Technology\\
  \small Clear Water Bay, Kowloon, Hong Kong\\
  \small \texttt{lchendh@connect.ust.hk}\\
  \small ORCID: 0009-0000-3495-7319}
\date{}

\makeatletter
\renewcommand{\maketitle}{%
  \begin{center}
    {\large\bfseries\@title\par}
    \vspace{0.9em}
    {\small\@author\par}
  \end{center}
  \vspace{0.35em}}
\renewcommand{\section}{\@startsection{section}{1}{\z@}%
  {-3.5ex \@plus -1ex \@minus -.2ex}%
  {2.3ex \@plus .2ex}{\normalfont\large\scshape}}
\renewcommand{\subsection}{\@startsection{subsection}{2}{\z@}%
  {-3.25ex \@plus -1ex \@minus -.2ex}%
  {1.5ex \@plus .2ex}{\normalfont\normalsize\itshape}}
\renewenvironment{proof}[1][\proofname]{%
  \par\pushQED{\qed}\normalfont
  \topsep6\p@\@plus6\p@\relax
  \trivlist
  \item[\hskip\labelsep\hskip\parindent
    {\scshape #1\@addpunct{.}}]\ignorespaces
}{\popQED\endtrivlist\@endpefalse}

\makeatother

\begin{document}
\maketitle

\begin{abstract}
For every \(N\ge36\), we realise any prescribed monic irreducible binary
polynomial of degree \(N\) as the characteristic polynomial of a linear
hybrid cellular automaton on a near-square region of exactly \(N\) cells.
The transition matrix is Moore-local with a null boundary and has at most
\(3N-1\) directed nonself dependencies. The dependency graph retains
a bidirectional Hamilton path; its indegree, outdegree and underlying
undirected degree are at most six. The underlying graph contains an
explicit square grid minor of side proportional to \(\sqrt N\).
The deterministic synthesis takes \(O(N^3)\) bit operations.
The construction combines a local similarity transformation with a
transport potential and joint routing across consecutive row gaps.
Exact verification of finite certificates, followed by induction,
proves the required routing inequalities for every admissible width.
We also give an entirely analytic construction
with fewer than \(7N/2\) dependencies and prove a lower bound of
\(5N/2-O(\sqrt N)\) for the retained path and full rectangular grid minor.
This lower bound is sharp when the characteristic polynomial is unrestricted.
\end{abstract}

\medskip
\noindent\textbf{Keywords.}
Linear cellular automata; prescribed characteristic polynomial; finite
fields; primitive polynomials; local similarity transformations; grid minors.

\medskip
\noindent\textbf{Mathematics Subject Classification (2020).}
Primary 68Q80; Secondary 05C50, 05C83.

\section{Introduction}
\label{sec:introduction}

A finite binary linear cellular automaton evolves according to a
transition matrix over \(\F\). We allow the local rules to vary from
cell to cell, as in the hybrid automata of the synthesis literature.
The characteristic polynomial determines much of the automaton's orbit
structure, while the nonzero entries of the transition matrix specify
which cells communicate.
We study the problem of prescribing an irreducible characteristic
polynomial while restricting these dependencies to a two-dimensional
lattice. Each cell may depend only on itself and on the eight
surrounding cells, giving the Moore neighbourhood. The synthesis must
also retain a two-dimensional connection structure as the number of
cells grows. We seek a realisation that meets these algebraic and
geometric requirements with exactly the prescribed number of cells.

To make the geometric requirement precise, we require the underlying
undirected dependency graph to contain a square grid minor of side
proportional to \(\sqrt N\), where \(N\) is the number of cells.
Such a minor consists of disjoint connected groups of cells, joined
according to a square grid. It gives a two-dimensional connection
structure whose scale grows with the physical region. A path cannot
satisfy this condition, irrespective of its placement. The edges used
within the groups and between adjacent groups must all be present in
the underlying undirected dependency graph of the final matrix.
This requirement concerns support in the physical cell coordinates
and is not an invariant of arbitrary linear changes of state coordinates.

For one-dimensional automata, the synthesis problem has a classical
solution. Mesirov and Sweet~\cite{MS87} proved that every irreducible
binary polynomial is the denominator of a continued fraction whose
partial quotients all have degree one. In matrix terms, this gives a
symmetric tridiagonal realisation with ones on both adjacent diagonals.
Serra and Slater~\cite{SS90} use a finite-field variant of Lanczos
tridiagonalisation to construct a similarity between an irreducible
companion matrix and a rule-\(90/150\) matrix.
Cattell's thesis~\cite{C95} and the work of Cattell and
Muzio~\cite{CM96} develop the synthesis of linear hybrid cellular
automata from irreducible polynomials. The resulting one-dimensional
automata use rules \(90\) and \(150\). Rule \(90\) adds the states of
the two neighbouring cells; rule \(150\) also adds the cell's own state.

Structured companion matrices provide another setting for realising a
prescribed characteristic polynomial. Studies of pentadiagonal forms and
nonsparse companion matrices~\cite{DFGVM19,EV16} use paired elementary
row and column operations to change matrix support through similarity.
Here we require a binary matrix whose dependencies are local in the
physical lattice and whose underlying undirected dependency graph
contains a grid minor of growing size.

A one-dimensional path can be placed on a rectangular grid in serpentine
order. The update then remains local, and its characteristic polynomial
does not change. This observation already appears in Janoowalla's
study of maximum-length two-dimensional automata~\mbox{\cite[pp.~36--37]{J92}}.
The same discussion asks whether every primitive polynomial has a
non-snake realisation, meaning one that is not obtained merely by placing
a one-dimensional automaton in serpentine order. Janoowalla's
nearest-neighbour model and restricted-rule searches use different
constraints from the Moore-local realisations considered here.
Subsequent work includes the
regular \(2\times n\) configurations of Cattell, Zhang, Serra and
Muzio~\cite{CZSM99}, and the characterisation and synthesis of a particular
hybrid two-dimensional transformation by Chattopadhyay, Choudhury and
Dihidar~\cite{CCD99}.
Zhu~\cite{Z03} studies regular rectangular configurations, derives
characteristic-polynomial formulas for \(3\times m\) automata, and
searches for maximum-period examples.

Matsumoto~\cite{M98} studies a two-dimensional family with
orthogonal-neighbour dependencies and gives arithmetic necessary conditions
for maximum period, together with examples. The family has a fixed update
pattern and restrictions on the rectangle dimensions, including coprimality.
For the
null-boundary Moore model, Choudhury, Nayak, Sahoo and
Rath~\cite{CNSR08} give matrix descriptions of uniform and hybrid
linear rules.

Recent work on linear non-uniform cellular automata studies systems
whose local maps depend on the cell. Phung~\cite{P24} develops their
duality and invertibility theory on group-indexed configuration spaces.
Another algebraic direction, surveyed by Manzoni, Mariot and
Menara~\cite{MMM26}, uses cellular automata to construct combinatorial
designs. In the linear constructions reviewed there, orthogonality of
the associated Latin squares is characterised by coprimality of the
rule polynomials. Mariot~\cite{M23} also studies maximal cycles in
systems formed from pairs of orthogonal cellular automata, using the
associated Sylvester matrix in the linear case. Our synthesis problem
prescribes the characteristic polynomial of a finite global update
while restricting its dependencies to a two-dimensional Moore
neighbourhood.

We prove that every monic irreducible binary polynomial of every degree
\(N\ge36\) has such a realisation. The cells occupy a rectangle of
complete rows, followed, when necessary, by one partial row. This gives
exactly \(N\) cells for every degree, including prime degrees, and a
bounding rectangle whose height-to-width ratio is at most \(4/3\).
The transition matrix is Moore-local with a null exterior boundary.
Its directed dependency graph retains a bidirectional spanning path,
and its underlying undirected graph contains a square grid minor
of side \(\lfloor\sqrt N/3\rfloor\). Each cell has at most six
neighbours, with indegree and outdegree at most six, and the total number
of directed nonself dependencies is at most \(3N-1\). The deterministic
construction takes \(O(N^3)\) bit operations. Its proof uses fixed finite
certificates, verified by exact computation, together with an induction
that applies to every width.

The constructions share a common starting point: a classical tridiagonal
realisation placed in serpentine order. A partial final row is attached on the side at which
the complete part of the path ends. We conjugate the path matrix \(B\)
by an involution \(I+R\), where \(R\) sends selected odd-row coordinates
to neighbouring even-row coordinates within the complete rows.
The turn into the partial row remains in an outer column, outside the
support of \(R\). Hence \(R^2=RBR=0\), and the conjugate is
\(B+RB+BR\). For the analytic construction, we select the shear
columns using two complementary masks. Within each pair of columns used for a grid-minor branch set, the
mask bits are opposite; between successive pairs they are equal.
The coefficient of each added cross-column dependency is the sum of
the adjacent mask bits. Among the paired columns, these dependencies
remain within each pair and cancel between consecutive pairs. This gives
an analytic construction with fewer than \(7N/2\) dependencies and all
three degree bounds at most six. Its grid-minor witnesses are independent
of the diagonal entries of \(B\).

To obtain the stronger bound, we let the branch sets vary between rows
and choose the masks from the diagonal differences. A transport potential measures the total displacement of the incoming
columns of the branch sets from fixed alternating columns. A uniform row lemma bounds the added connections
plus the change in this potential. Its proof uses a finite-state dynamic
program and a fixed certificate that bounds the minimum cost for every
input word. At even width, we choose the routes across two consecutive
gaps jointly, so that the outgoing interface of the first route is
the incoming interface of the second. This reduces the combined cost
while allowing either route to use more than its separate allowance.
The intermediate potentials cancel when these inequalities are summed.
A sharper estimate at the standard initial interface handles the
remaining boundary family and gives at most \(3N-1\) dependencies
for every size. Throughout, each gap contributes at most two new
neighbours per cell and supplies every link required by the full grid
minor.

The full rectangular grid minor also gives a lower bound. Any directed
graph with a bidirectional spanning path and an \(h\times k\) grid
minor in its underlying graph has at least
\(2N-2+(h-1)(k-1)\) directed edges. For our near-square parameters,
this is \(5N/2-O(\sqrt N)\). We attain this bound at the level of
support by an orthogonal-neighbour construction on the same region.
Its spectrum is not prescribed, so the optimal connection bound for
irreducible-polynomial realisation remains undetermined.

For physical widths six through eleven, a separate refinement gives
a smaller explicit bound on the connection count. It uses three fixed finite tables
whose entries are verified exhaustively. A potential inequality then
extends the finite verification to every height, every permitted partial
row and every binary path diagonal. In particular, it covers all
prescribed irreducible inputs of every degree \(36\le N\le143\).

When the prescribed polynomial is primitive, this construction gives a
maximum-period automaton without adding cells or changing the time
step. We also obtain a synthesis algorithm whose input is \(N\) and
a complete certified prime factorisation of \(2^N-1\). The arithmetic part is
a consequence of Shoup's deterministic algorithms for constructing
irreducible polynomials and searching for primitive
elements~\cite{S90,S92}; the contribution here is their combination with
the local two-dimensional realisation. We specify a recursive
primality-certificate format and include its verification in a
polynomial bound in \(N+L\), where \(L\) is the total length of the
supplied factor data. Discovering the factors and producing their
certificates are costs of preparing that input.
Statistical properties of a chosen output stream are a separate question
from the state-period result proved below.

\section{Preliminaries and main results}
\label{sec:results}

For positive integers \(h,w\), let
\[
  V_{h,w}=\{0,\ldots,h-1\}\times\{0,\ldots,w-1\}.
\]
A finite lattice region \(\Gamma\subset\mathbb Z^2\) contains one
state bit at each of its cells. A state is a column vector
\(X\in\F^\Gamma\), and a transition matrix
\(A=(A_{uv})_{u,v\in\Gamma}\) acts by \(X(t+1)=AX(t)\).
Thus \(A_{uv}=1\) represents a directed dependency \(v\to u\).
The rules are allowed to vary from cell to cell. Logarithms in the
arithmetic estimates are to base two.

\begin{definition}
A matrix \(A\) on \(\Gamma\) is \emph{Moore-local} if
\[
 A_{(r,c),(r',c')}=0
 \quad\text{whenever}\quad
 \max\{|r-r'|,|c-c'|\}>1.
\]
The boundary is \emph{null}: coordinates outside \(\Gamma\) have value
zero and are not states of the automaton. Opposite sides are not identified.
\end{definition}

Let \(D(A)\) be the directed graph of nonzero off-diagonal entries of
\(A\), with the direction convention above. Let \(U(A)\) be its simple
underlying undirected graph. Write
\[
 m(A)=|\{(u,v):u\ne v,\ A_{uv}=1\}|,\qquad
 \ell(A)=|\{v:A_{vv}=1\}|.
\]
In particular, a bidirectional connection contributes two to \(m(A)\)
and one to \(|E(U(A))|\). Degrees in \(D(A)\) exclude self-dependence.

We use the usual branch-set description of a graph minor. An
\(a\times b\) grid minor in \(U(A)\) consists of \(ab\) pairwise
disjoint connected vertex sets, indexed by an \(a\times b\) grid,
with an edge between the sets at every pair of adjacent grid positions.
All connecting edges must belong to \(U(A)\).

The characteristic polynomial of \(A\) is
\(\chi_A(x)=\det(xI-A)\). A monic irreducible polynomial \(p\in\F[x]\)
of degree \(n\) is \emph{primitive} if the residue of \(x\) in
\(\F[x]/(p)\) has multiplicative order \(2^n-1\).

We now specify the regions used in the construction. For
\(h\ge2\), \(w\ge4\) and \(0\le r<w\), put
\begin{equation}
 \Gamma_{h,w,r}=V_{h,w}\cup
 \begin{cases}
  \{(h,j):0\le j<r\},&h\text{ even},\\
  \{(h,j):w-r\le j<w\},&h\text{ odd}.
 \end{cases}
 \label{eq:region}
\end{equation}
Thus the first \(h\) rows are complete, and the possible partial row
contains \(r\) cells on the left or right according to the parity of \(h\).
For \(N\ge36\), we use the region
\begin{equation}
 \Gamma_N=\Gamma_{h,w,r},\qquad
 w=\lfloor\sqrt N\rfloor,\quad h=\lfloor N/w\rfloor,\quad r=N-hw.
 \label{eq:canonical-region}
\end{equation}
This region has exactly \(N\) cells. The constructions below bound its
aspect ratio and give explicit grid-minor witnesses.

\begin{theorem}
\label{thm:main}
Let \(N\ge36\), and let \(p\in\F[x]\) be any monic irreducible
polynomial of degree \(N\). Use \(h,w,r\) from
\eqref{eq:canonical-region}, and put \(k=\lfloor(w-2)/2\rfloor\).
There is a deterministic algorithm that constructs a transition matrix
\(A\) on \(\Gamma_N\) with the following properties:
\begin{enumerate}
  \item \(\chi_A=p\), and \(A\) is Moore-local with a null boundary;
  \item \(D(A)\) contains a bidirectional spanning path and is therefore
        strongly connected;
  \item \(U(A)\) contains an explicitly specified \(h\times k\) grid
        minor, and hence a \(t\times t\) grid minor for
        \(t=\lfloor\sqrt N/3\rfloor\);
  \item every vertex has indegree and outdegree at most six,
        \(U(A)\) has maximum degree at most six, and
        \[
          m(A)\le3N-1<3N;
        \]
  \item the algorithm uses \(O(N^3)\) bit operations and
        \(O(N^2)\) bits of space, including the output of the local
        rules and the grid-minor witnesses.
\end{enumerate}
\end{theorem}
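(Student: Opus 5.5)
The construction starts from the classical one-dimensional realisation. By Mesirov--Sweet~\cite{MS87}, or constructively by the finite-field Lanczos method of Serra--Slater~\cite{SS90} and Cattell--Muzio~\cite{CM96}, there is a symmetric tridiagonal matrix \(T\) over \(\F\) with ones on both off-diagonals, some diagonal \(d\in\F^N\), and \(\chi_T=p\). Computing \(T\) costs \(O(N^3)\) bit operations, since we do \(N\) Lanczos steps, each of which is a matrix--vector product with a companion matrix together with a few inner products. We then place \(T\) along the serpentine order of \(\Gamma_N\) to obtain the path matrix \(B\). Row \(i\) is traversed left to right when \(i\) is even and right to left when \(i\) is odd. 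The complete part of the path therefore ends at column \(0\) or \(w-1\) according to the parity of \(h\), and this is exactly the side on which \eqref{eq:region} attaches the partial row. So \(B\) is Moore-local and has a bidirectional Hamilton path, with \(m(B)=2N-2\).

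Next we conjugate by \(I+R\). Here \(R\) has entries only of the form \(R_{(2i,c'),(2i+1,c)}\) with \(|c-c'|\le1\). It sends chosen odd-row cells to neighbouring even-row cells and avoids the outer column used by the turn into the partial row. Since the images lie in even rows and the sources in odd rows, we get \(R^2=0\). The serpentine turns between rows \(2i\) and \(2i+1\) lie in outer columns excluded from the support of \(R\), so \(B\) sends odd-row coordinates to odd rows or to other pair blocks in a controlled way, and one checks \(RBR=0\). Then \(A=(I+R)B(I+R)=B+RB+BR\) is similar to \(B\), so \(\chi_A=p\). The new entries \(RB+BR\) couple rows \(2i\) and \(2i+1\) within horizontal distance at most \(1\), so \(A\) stays Moore-local. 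The path edges of \(B\) survive wherever the added terms do not cancel them, and we choose \(R\) so that they never land on a path edge. This gives items 1 and 2.

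For item 3, the branch sets are vertical dominoes or short routes in column pairs \(\{2j+1,2j+2\}\), \(j<k\), taken row by row. Horizontal adjacency comes from the path edges inside each row. Vertical adjacency between rows \(2i\) and \(2i+1\) comes from the new cross entries of \(RB+BR\). Between rows \(2i+1\) and \(2i+2\) it comes either from path turns or, when a branch set switches columns, from joint routing. The \(t\times t\) minor follows because \(h\ge w\ge 3t\) and \(k\ge t\) for \(N\ge36\).

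The count in item 4 is the main obstacle. The analytic mask choice, with opposite bits within a pair and equal bits across pairs, gives only about \(7N/2\). To reach \(3N-1\) I would choose the masks row by row from the diagonal differences \(d_{(r,c)}+d_{(r,c+1)}\), which are the cancellation terms, and let the branch-set columns drift. I would define a transport potential \(\Phi_i\) measuring the displacement of the branch-set interface columns from the alternating reference, and prove a per-gap lemma of the form \[\text{added edges in gap }i+\Phi_{i+1}-\Phi_i\le w-c\] for every input diagonal word. This lemma is a finite-state shortest-path problem over the row, since the state space is the local interface and the mask bits. I would certify it by exact computation of the optimal cost per transition together with an induction on \(w\) that shows periodic extension. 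At even width I would pair consecutive gaps and optimise them jointly. Telescoping then cancels the potentials and leaves a boundary term, which is bounded separately at the initial interface to get \(\le 3N-1\). Since each gap adds at most two neighbours per cell on top of the path's two, the degrees are at most six. Item 5 follows because the dynamic program is linear in \(N\), so the \(O(N^3)\) Lanczos step dominates the running time and the \(O(N^2)\) storage of the companion matrix dominates the space.
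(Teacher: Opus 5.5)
Your approach has a genuine gap, and it is in the shear itself. You take \(R\) supported only on entries \(R_{(2i,c'),(2i+1,c)}\), so every entry of \(RB+BR\) joins row \(2i+1\) to row \(2i\). Each gap between rows \(2i+1\) and \(2i+2\) is then crossed by a single edge of \(U(A)\), the path turn in column \(0\). Neither path turns nor ``joint routing'' can supply the \(k\) vertical adjacencies your minor needs there, because routing can only select support that some shear has created.

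Worse, these single edges are bridges. Every \(2\)-connected minor of \(U(A)\) is therefore a minor of one two-row block, a subgraph of the \(2\times w\) king graph, which has treewidth at most three. So \(U(A)\) has no \(4\times4\) grid minor, and item~3 fails once \(t\ge4\), i.e.\ for \(N\ge144\).

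The missing idea is to shear every odd complete row in both directions: to row \(i-1\) with one mask and to row \(i+1\) with another. These are the paper's \(a^{+}\) and \(a^{-}\), or independent per-gap masks in the transport construction. All shears still go from odd to even rows in internal columns, so \(R^2=RHR=RDR=RV=VR=0\) is unaffected. Every complete-row gap then acquires cross entries, and the branch sets can be horizontal intervals within single rows, which gives all \(h\) rows of the minor. In the paper, joint routing only means choosing the masks of two consecutive gaps, both already sheared, together in order to save cost.

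You must also restrict to vertical shears \(c'=c\). An entry \((2i+1,c)\to(2i,c+1)\) of \(R\) produces entries of \(RH\) and \(HR\) at horizontal distance two, contradicting your locality claim. With vertical shears, the cross entries \((o,j)\to(v,j\pm1)\) have coefficient \(a_j\oplus a_{j+1}\), and the vertical entry has coefficient \(a_j(d_{o,j}\oplus d_{v,j})\). The relevant diagonal differences are therefore taken across the gap, not \(d_{(r,c)}+d_{(r,c+1)}\).

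Once this is repaired, your counting plan has the paper's architecture, but the unspecified constant in ``\(\le w-c\)'' is exactly what decides whether \(3N-1\) is reached. The paper needs three inequalities:
\begin{enumerate}
 \item \(c(a,q)+D(z)\le2k+4+D(x)\) for every single gap;
 \item the joint bound \(4k+6\) for two consecutive gaps at even width;
 \item the sharper bound \(2k+3\) when the input is the fixed word \(s_w\).
\end{enumerate}
The incidence bound \(\tau_{j-1}+\tau_j+a_jq_j\le2\) must be imposed inside the routing. With arbitrary masks, a cell can receive three new neighbours in one gap, so degree six is not automatic.

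The telescoped sum has no boundary term to estimate: the initial potential is zero and the nonnegative final potential is dropped. The saving at the start is a better inequality for the first gap. It is needed precisely when \(w\) is even, \(h=w+2\) and \(r=0\); there gap \(0\) is treated alone, the gaps \((1,2),\ldots,(w-1,w)\) are paired, and the budget is exactly exhausted.

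Finally, ``periodic extension'' is not an argument valid for all widths. The paper proves these inequalities with closed covers. A closed cover is a finite set of dominating min-plus cost vectors, indexed by contexts recording the balance and a capped column count, closed under every input letter and checked at the terminal column. Monotonicity then gives every width by induction on the columns.
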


More precisely, the construction gives \(m(A)\le3N-r-1\) when
\(w\) is odd and \(m(A)\le3N-r-2\) when \(w\) is even and
\(h\le w+1\). The remaining case is \(h=w+2\), \(r=0\),
with even \(w\), for which the bound is \(3N-1\).
The proof is computer-assisted: finite certificates establish the
single-gap, joint two-gap and standard-start inequalities in
Section~\ref{sec:transport}. Their induction proofs appear in
Appendix~\ref{app:transport}. The algorithm takes the coefficients of
\(p\) as input and assumes that \(p\) is irreducible.

We first establish a separate analytic construction, which also gives
explicit formulas for its connection count.

\begin{theorem}
\label{thm:paired}
Under the hypotheses of Theorem~\ref{thm:main}, a deterministic
paired-shear construction on \(\Gamma_N\) realises \(p\), retains the
bidirectional spanning path and full \(h\times k\) grid minor, is
Moore-local with a null boundary, and has all three nonself degree bounds
at most six. It satisfies \(m(A)<7N/2\) and uses \(O(N^3)\) bit
operations and \(O(N^2)\) bits of space.
\end{theorem}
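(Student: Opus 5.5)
We start from the classical one-dimensional solution. By Mesirov--Sweet~\cite{MS87} (equivalently, the Lanczos-type procedure of~\cite{SS90,CM96}), every irreducible \(p\) of degree \(N\) is the characteristic polynomial of a symmetric tridiagonal matrix \(T\) with ones on both off-diagonals and some diagonal \(d\in\F^N\). This matrix can be computed deterministically in \(O(N^3)\) bit operations, for instance by running Lanczos/Euclid on the companion matrix. We then place the indices \(0,\dots,N-1\) on \(\Gamma_N\) in serpentine order. Row \(i\) is traversed left to right for even \(i\) and right to left for odd \(i\), and the partial row of \eqref{eq:region} is attached on the side where row \(h-1\) ends. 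Call the resulting matrix \(B\). It is Moore-local, its support is exactly the bidirectional spanning path, and \(\chi_B=p\). The path already gives item~2, and \(m(B)=2N-2\).

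The next step is the shear. Choose a set \(S\) of shear positions in odd rows \(2i+1\), columns \(j\) away from the outer columns, and let \(R\) have \(R_{(2i,j),(2i+1,j)}=1\) for \((2i+1,j)\in S\), so that \(R\) sends an odd-row coordinate to the cell directly above it. Sources of \(R\) lie in odd rows and targets in even rows. Any \(B\)-neighbour of a target is either in the same even row or is the vertically adjacent cell at an outer column, where the serpentine turns. Since outer columns are excluded from \(S\), no \(B\)-neighbour of a target is a source. Hence \(R^2=0\) and \(RBR=0\), so \(A=(I+R)B(I+R)=B+RB+BR\) and \(\chi_A=p\).

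The entries of \(RB+BR\) stay within distance one of the column pair involved, so \(A\) is Moore-local. We split the columns \(1,\dots,w-2\) into \(k\) pairs \(\{2s+1,2s+2\}\). For each row we pick a mask in \(\{0,1\}^{w}\) on the shear columns of the pair \(i\). Within a pair the two bits are opposite, and consecutive pairs carry equal adjacent bits. The coefficient of a horizontal-diagonal cross dependency created between columns \(j\) and \(j+1\) is the sum of the adjacent mask bits. It therefore vanishes between pairs and equals \(1\) inside each pair. The diagonal entries of \(B\) contribute only to the vertical terms \(RB\) and \(BR\) at a shear cell, \(d_u+d_v\), which may cancel or not. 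For this reason we take the witnesses from the path edges and the cancellation-free terms only, so that they are independent of \(d\).

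The grid minor is then explicit. The branch set at grid position \((i,s)\) is the horizontal path edge \(\{(i,2s+1),(i,2s+2)\}\), which is a path edge and hence always present. Horizontal adjacency between consecutive pairs in row \(i\) uses the path edge \((i,2s+2)\)–\((i,2s+3)\). Vertical adjacency between rows \(2i\) and \(2i+1\), and between rows \(2i+1\) and \(2i+2\), must come from \(RB+BR\). The term \(BR\) gives \(A_{u,(2i+1,j)}\ni B_{u,(2i,j)}\), which links the odd cell \((2i+1,j)\) to the path-neighbours of \((2i,j)\). The term \(RB\) links \((2i,j)\) to the path-neighbours of \((2i+1,j)\). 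In particular, these terms give diagonal edges from \((2i+1,j)\) to \((2i,j\pm1)\) that do not involve \(d\). The opposite-bit rule guarantees that every pair contains a shear in every odd row. For the gaps from odd row \(2i+1\) to even row \(2i+2\), the shear is placed in the next odd row \(2i+3\), pushing into \(2i+2\) in the same way. Equivalently, one alternates the shear direction by using \(R\) from odd rows both upward and downward, with upward and downward shears in different columns so that \(R^2=RBR=0\) still holds.

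The counting comes last. Each shear adds at most a bounded number of entries, roughly four new directed nonself dependencies per pair per gap. With \(k\approx w/2\) pairs and \(h-1\) gaps, this gives \(m(A)\le 2N-2+c\,hk\). Bounding this by \(<7N/2\) requires the per-pair increment to be at most \(3/2\) per cell on average, so some of the within-pair cross terms must cancel. The degree bound follows because each cell gains at most two new neighbours from each of the two adjacent gaps, on top of its two path neighbours. This local bookkeeping is the main obstacle: the \(RBR=0\) condition at the turns, the partial row, and the exact count \(<7N/2\) must all be checked by a case analysis of the local \(3\times 4\) patterns. The \(O(N^3)\) cost is dominated by the tridiagonal synthesis. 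The shear and the witnesses take only \(O(N)\) local operations, with \(O(N^2)\) space for the dense intermediate similarity.
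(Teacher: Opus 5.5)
Your construction is the paper's: a serpentine seed; shears from each odd row into both neighbouring even rows, with opposite bits inside each pair \(\{2b+1,2b+2\}\), equal bits across pairs, and complementary masks in the two gaps; and the column pairs as branch sets. There are two slips in the set-up. First, placing shears in row \(2i+3\) serves the gap between rows \(2i+2\) and \(2i+3\), not the gap between \(2i+1\) and \(2i+2\), so only your ``alternating direction'' variant works. Second, \(R^2=RBR=0\) holds for any internal shears pointing from odd to even rows, so complementarity is needed for the degree and connection counts, not for nilpotency.

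The genuine gap is the bound \(m(A)<7N/2\). You defer it to an unspecified case analysis, and your heuristic points the wrong way. The within-pair cross entries have coefficient \(a_{2b+1}+a_{2b+2}=1\) and are exactly your vertical minor links, so they cannot cancel; what cancels are the between-pair entries, as you had already observed. The missing count is short. By Lemma~\ref{lem:support}, a gap contributes exactly two directed entries per mask transition and at most one vertical entry per shear. Each gap has one shear per pair, hence at most \(k\) vertical entries. Its mask has \(k\) transitions inside pairs, none between pairs, and possibly one more at each of the column boundaries \(0,1\) and \(2k,2k+1\). Each gap therefore adds at most \(2(k+2)+k=3k+4\) entries. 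With \(N\ge h(2k+2)\), this gives \(m(A)\le2N-2+(h-1)(3k+4)<7N/2\) whenever \(h<3k+6\), which holds because \(h\le w+2\le2k+5\). The paper sharpens this using complementarity: \(a^+\) and \(a^-\) together have exactly \(2k+2\) transitions. This yields the exact formula \eqref{eq:edgecount} and a bound below \(7N/2\) on every region.

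Your degree argument is also false as stated, because the paired shear can give a cell three new neighbours from a single gap. If \(a_1=1\) in a gap, the column-\(1\) cell in each row of that gap receives three possible new neighbours. One cross entry comes from the outer transition between columns \(0\) and \(1\), one from the within-pair transition, and a vertical entry may come from \(a_1q_1\). The same happens at column \(2k\) when \(a_{2k}=1\). A per-gap bound of two is exactly what the transport construction enforces through the third condition of Lemma~\ref{lem:transport-row}, but the paired masks do not satisfy it. The bound six instead comes from complementarity. The masks in the two gaps meeting a row are complementary on columns \(1,\ldots,2k\). Hence the outer transition at column \(1\) or \(2k\) occurs in only one of those gaps, and each cell meets at most one shear arrow. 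A paired-column cell therefore has at most three cross-column neighbours and at most one vertical one, for a total of \(2+3+1=6\), as in Proposition~\ref{prop:counts}.
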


The proof of Theorem~\ref{thm:paired} is entirely analytic. Its geometric
construction works on every region in \eqref{eq:region}, for any binary
path diagonal. The transport construction proving Theorem~\ref{thm:main}
also accepts every path diagonal, with \(w\ge6\). We develop the common
local support calculation first, then prove the two connection bounds.

\begin{corollary}
\label{cor:primitive}
If \(p\) in Theorem~\ref{thm:main} is primitive, the resulting
automaton has at most \(3N-1\) directed nonself dependencies, the
zero state is fixed, and all \(2^N-1\) nonzero states belong to a
single cycle.
\end{corollary}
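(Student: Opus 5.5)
The plan is to take the matrix \(A\) supplied by Theorem~\ref{thm:main} for the primitive polynomial \(p\). The dependency bound \(m(A)\le3N-1\) is then just item~4 of that theorem, so only the orbit structure needs an argument. Since \(X(t+1)=AX(t)\) is linear, \(A\cdot0=0\) and the zero state is fixed. Because \(\chi_A=p\) is irreducible, it has nonzero constant term, so \(\det A\ne0\). The map \(X\mapsto AX\) is therefore a permutation of \(\F^N\), and every state lies on a cycle.

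For the cycle structure, I will use irreducibility of \(\chi_A=p\) to identify the \(\F[x]\)-module \(\F^N\), with \(x\) acting as \(A\), with \(\F[x]/(p)\). This module is the field \(\mathbb F_{2^N}\). To see it, pick any nonzero \(v\) and consider its annihilator ideal \((q)\), where \(q\mid p\) by Cayley--Hamilton. Since \(v\neq0\), \(q\ne1\), so \(q=p\). Hence \(v,Av,\dots,A^{N-1}v\) are linearly independent, and \(v\) is a cyclic vector. This gives an isomorphism \(\F[x]/(p)\to\F^N\), \(f\mapsto f(A)v\), which intertwines multiplication by \(x\) with \(A\).

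Under this isomorphism, the orbit of \(v\) under \(A\) corresponds to the powers \(x^t\) in \(\F[x]/(p)\). By the definition of primitivity, the residue of \(x\) has multiplicative order \(2^N-1\). The orbit of \(v\) therefore has exactly \(2^N-1\) distinct elements, all nonzero since \(A\) is invertible. As there are only \(2^N-1\) nonzero states, every nonzero state lies on this one cycle.

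No step is a genuine obstacle, since all the construction work sits in Theorem~\ref{thm:main}. The only point needing care is the identification of the automaton's dynamics with multiplication by \(x\). This requires \(\chi_A=p\) exactly, not merely up to similarity, and that is guaranteed by item~1.
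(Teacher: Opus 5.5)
Your proposal is correct and follows essentially the paper's argument: the paper also identifies \(\F^N\) with the field \(\F[x]/(p)\), with \(A\) acting as multiplication by \(x\). It does so via the minimal polynomial and a one-dimensional \(K\)-vector-space structure, where you use a cyclic vector. Both then read off from \(\ord(x)=2^N-1\) that the nonzero states form a single cycle, with the dependency bound inherited from Theorem~\ref{thm:main}.
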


\begin{corollary}
\label{cor:factors}
Let \(N\ge36\). Given a complete prime factorisation of \(2^N-1\)
with certificates in the format of Appendix~\ref{app:certificates},
a deterministic algorithm verifies the supplied data and constructs a
matrix satisfying all the geometric conclusions and the connection
bound \(m(A)\le3N-1\) of Theorem~\ref{thm:main}, and the orbit conclusion of
Corollary~\ref{cor:primitive} using at most \(K(N+L)^{16}\) bit operations,
where \(L\) is the total encoding length of the factor data and certificates
and \(K\) is an absolute constant. Invalid data are rejected within
the same bound.
\end{corollary}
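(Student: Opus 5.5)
The statement to prove is Corollary~\ref{cor:factors}. The plan has three stages: validate the supplied factor data, construct a primitive polynomial of degree \(N\) deterministically from it, and feed that polynomial to the algorithm of Theorem~\ref{thm:main}. The orbit conclusion then follows from Corollary~\ref{cor:primitive}. The geometric conclusions and the bound \(m(A)\le3N-1\) are inherited unchanged, since Theorem~\ref{thm:main} accepts any monic irreducible \(p\), and primitive polynomials are irreducible.

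\emph{Verification.} First check that the listed primes \(q_1,\dots,q_s\) with exponents \(e_i\) multiply to \(2^N-1\). This is integer arithmetic on numbers of at most \(N\) bits with at most \(L\) input bits, so it costs polynomial time in \(N+L\). Next verify each primality certificate recursively in the format of Appendix~\ref{app:certificates}. I expect this format to be Pratt-style: a witness \(g\) with \(g^{q-1}\equiv1\) and \(g^{(q-1)/\ell}\not\equiv1\) modulo \(q\) for every prime \(\ell\mid q-1\), together with a factorisation of \(q-1\) certified recursively. Each node costs polynomially many bit operations in its size. The total number of nodes and their sizes are bounded by the encoding length \(L\), so verification costs \(O((N+L)^c)\) for a small \(c\). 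Any failure leads to rejection within the same bound.

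\emph{Arithmetic core.} Use Shoup's deterministic algorithm~\cite{S90} to construct a monic irreducible \(f\) of degree \(N\) over \(\F\) in time polynomial in \(N\). Then apply Shoup's deterministic search for primitive elements~\cite{S92} in \(\mathbb F_{2^N}=\F[x]/(f)\). This search enumerates a polynomially sized candidate set, which is guaranteed to contain a generator, conditional on known factorisation data. For each candidate \(\alpha\), test whether \(\alpha^{(2^N-1)/q_i}\ne1\) for all \(i\), using the verified factorisation. This step costs \(O(s\cdot N\cdot M(N))\) per candidate, with \(s\le N\). Once a generator \(\alpha\) is found, compute its minimal polynomial \(p\) over \(\F\). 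This can be done as \(\prod_{j<N}(x-\alpha^{2^j})\), or by linear algebra on the powers \(1,\alpha,\dots,\alpha^N\) in \(O(N^3)\). The resulting \(p\) has degree \(N\), since \(\alpha\) generates the full multiplicative group, and it is primitive by construction.

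\emph{Assembly and main obstacle.} Run the algorithm of Theorem~\ref{thm:main} on \(p\); this costs a further \(O(N^3)\) operations. Corollary~\ref{cor:primitive} then gives that the zero state is fixed and that all nonzero states lie on a single cycle. The step I expect to be hardest is the explicit exponent \(16\) in the bound \(K(N+L)^{16}\). Obtaining it requires tracking the cost of each component against a common measure \(N+L\). These components are Shoup's irreducible-polynomial construction (roughly \(N^{4+\epsilon}\) bit operations in the deterministic version), the primitive-element search (the candidate count times the cost of an order test), and the recursive certificate check. The obstacle is that the candidate count in~\cite{S92} is itself a polynomial of moderate degree, say \(N^{c}\) with \(c\) around \(4\) to \(6\) depending on the version. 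I would bound each factor crudely by powers of \(N+L\), using \(s\le L\) and \(N\le N+L\), and multiply the bounds. Provided the resulting exponents sum to at most \(16\), the claimed bound follows with some absolute \(K\).
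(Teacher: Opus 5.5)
Your route is the paper's: check the Pratt-style certificates and \(\prod q_j^{e_j}=2^N-1\), build \(f\) by Shoup's irreducible-polynomial algorithm, run the order test \(a^{M/q_j}\ne1\) over Shoup's candidates, recover the minimal polynomial from \(1,a,\dots,a^N\), and feed it to Theorem~\ref{thm:main} and Corollary~\ref{cor:primitive}.

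\textbf{The exponent.} The proposal ends in a conditional (``provided the resulting exponents sum to at most 16''). That deferred accounting is exactly what the stated bound requires. It closes as in Proposition~\ref{prop:primitive-acquisition}:
\begin{itemize}
\item From \(2^\rho\le\prod q_j<2^N\) you get \(\rho<N\), and also \(1+\log_2\rho\le N\). Hence Shoup's threshold \(B=c\rho^4(1+\log_2\rho)^4N^2\) is at most \(cN^{10}\).
\item Enumerate monic polynomials of degrees \(0,\dots,N-1\) in degree order and stop at the first that passes the order test. Fewer than \(4B\) candidates are tried, and the algorithm never needs to know \(c\) or \(\lambda\). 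Your phrase ``enumerates a polynomially sized candidate set'' hides this point, since Shoup's set is defined through an unspecified constant.
\item If \(\lambda_*\ge N\), cyclicity of \(K^\times\) and \(2^N\le2B\) give the same count.
\item Each test is \(\rho<N\) exponentiations with \(N\)-bit exponents, so \(O(N^4)\) as you say. The search therefore costs \(O(N^{14})\).
\item The irreducible construction is \(O(N^5)\). The minimal polynomial and Theorem~\ref{thm:main} are \(O(N^3)\).
\end{itemize}
With verification at \(O((N+L)^4)\), the total is \(O((N+L)^{14})\), and \(16\) leaves room for logarithmic addressing costs. Your sharper candidate estimate, about \(N^6\log^4N\), leaves even more slack. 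Also, the existence of a generator in Shoup's set is unconditional in characteristic two. The factorisation is needed only to recognise it, not to guarantee it.

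\textbf{Rejection of invalid data.} The claim that ``any failure leads to rejection within the same bound'' does not follow from node sizes being bounded by \(L\). A malformed certificate can encode an exponent \(b_j\) in \(O(\log b_j)\) bits. Computing \(s_j^{b_j}\) naively then takes time exponential in \(L\). The format of Appendix~\ref{app:certificates} prevents this in three ways:
\begin{itemize}
\item it rejects exponents outside \([1,N]\) before any power is formed;
\item it aborts any product that exceeds its target \(m-1\) (or \(M\) at the top level);
\item it writes certificates as trees, so there are \(O(L)\) nodes.
\end{itemize}
Together these keep every operand at \(O(N)\) bits and give \(O((N+L)^4)\) for acceptance and rejection alike. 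You need these safeguards, or an equivalent cap, to prove the rejection clause of the statement.
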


Corollary~\ref{cor:factors} includes reading and checking the input,
as well as synthesising the automaton. The arithmetic arguments for
both corollaries are given in Section~\ref{sec:period}. The cubic
bound in Theorem~\ref{thm:main} starts with the supplied polynomial.

\section{The local shear construction}
\label{sec:shear}

\subsection{A path realisation}

For a binary word \(d=(d_0,\ldots,d_{n-1})\), let \(T(d)\) be the
symmetric tridiagonal matrix with diagonal \(d\) and ones on both
adjacent diagonals. It describes a bidirectional path with a choice of
self-dependence at each vertex.

\begin{proposition}
\label{prop:seed}
Given a monic irreducible polynomial \(p\in\F[x]\) of degree \(n\ge2\),
one can deterministically construct a word \(d\in\F^n\) such that
\(\chi_{T(d)}=p\), using \(O(n^3)\) bit operations and \(O(n^2)\)
bits of space.
\end{proposition}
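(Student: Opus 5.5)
We will follow the Mesirov--Sweet continued-fraction route and make it deterministic and cubic. Write \(P_k(x)=\chi_{T(d_0,\ldots,d_{k-1})}(x)\). Expanding the determinant along the last row gives the three-term recurrence
\[
  P_{k+1}=(x+d_k)P_k+P_{k-1},\qquad P_0=1,\ P_{-1}=0,
\]
since signs are irrelevant over \(\F\). Reading it backwards, \(P_{k-1}\) is the remainder of \(P_{k+1}\) modulo \(P_k\), and the quotient is \(x+d_k\). Thus \(\chi_{T(d)}=p\) holds if and only if there is a polynomial \(q\) of degree \(n-1\) such that the Euclidean algorithm on \((p,q)\) produces only linear quotients, all of the form \(x+d_k\), and terminates at a constant \(1\). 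Equivalently, \(q/p\) has a continued fraction with all partial quotients of degree one, and then \(q=P_{n-1}\) is the characteristic polynomial of the leading \((n-1)\times(n-1)\) block. The plan is to exhibit such a \(q\) deterministically and then read off \(d\) from the quotients.

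To find \(q\) we use the standard characterisation behind \cite{MS87}: the Euclidean algorithm on \((p,q)\) has all quotients linear if and only if the Hankel forms built from the Laurent expansion \(q/p=\sum_{i\ge1}s_ix^{-i}\) are nondegenerate at every order. Equivalently, the sequence \((s_i)\) has linear complexity profile increasing by exactly one at every second step, so it is a perfect profile. For irreducible \(p\) the classical choice is \(q=(p')^{-1}\cdot x\)-type, or more robustly \(q\equiv p'\cdot g \pmod p\) for a suitable element \(g\). Concretely, we would take the Mesirov--Sweet witness. Since \(p\) is irreducible of degree \(n\), the Hankel matrix of the trace sequence \(s_i=\mathrm{Tr}(\beta\alpha^{i})\) is nondegenerate for suitable \(\beta\). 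The perfect-profile condition is equivalent to \(s_{2i}=s_{2i-1}+s_i\) (Wang--Massey), so it suffices to take \(s_i=\mathrm{Tr}(\beta\alpha^i)\) with \(\beta\) chosen so that the sequence satisfies this relation. Using \(\mathrm{Tr}(z^2)=\mathrm{Tr}(z)\), this linear condition on \(\beta\) reduces to a linear system over \(\F\) in the coordinates of \(\beta\). Irreducibility of \(p\) guarantees a nonzero solution, and this is the existence content of \cite{MS87}.

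The key steps, in order, are the following.
\begin{enumerate}
  \item Set up the \(n\times n\) linear system over \(\F\) for \(\beta\) in the basis \(1,\alpha,\ldots,\alpha^{n-1}\) of \(\F[x]/(p)\). This takes computing \(\alpha^i\) and traces for \(i\le 2n\), which costs \(O(n^2)\) bit operations per row with shift-and-reduce.
  \item Solve the system by Gaussian elimination in \(O(n^3)\) bit operations and \(O(n^2)\) space, and pick the nonzero solution.
  \item Form \(q=\sum s_i\,\cdot\) (that is, \(q=\bigl(p\sum_{i=1}^{n} s_ix^{-i}\bigr)\) truncated to its polynomial part), which costs \(O(n^2)\).
  \item Run the Euclidean algorithm on \((p,q)\), recording \(d_{n-1},\ldots,d_0\) from the linear quotients. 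Each step costs \(O(n)\), so this part costs \(O(n^2)\) in total.
  \item Verify that every quotient is linear and the final remainder is \(1\). The recurrence then confirms \(\chi_{T(d)}=p\).
\end{enumerate}

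The main obstacle is step 1's correctness: proving that the perfect-profile linear condition has a solution \(\beta\) giving a sequence with minimal polynomial exactly \(p\), rather than a degenerate one. For this we will rely on the Mesirov--Sweet theorem \cite{MS87}, together with the Serra--Slater/Lanczos reformulation \cite{SS90}, both taken as known. Irreducibility makes every nonzero \(\beta\) yield a sequence of full linear complexity \(n\), so any nonzero solution works. If the reduction via Wang--Massey is awkward to cite, we would instead run the \(\F\)-Lanczos tridiagonalisation of \cite{SS90} on the companion matrix with a start vector obtained from the same linear system. That alternative has the same \(O(n^3)\) cost.
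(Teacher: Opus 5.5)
Your second half is sound and matches the paper's final step: the three-term recurrence identifies the Euclidean quotients of \((p,q)\) with the diagonal entries, and the cost accounting is fine. The gap is in producing \(q\), which is where all the content lies. As written, your linear system for \(\beta\) can return a degenerate solution, for two separate reasons.

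First, the relation is misstated. The Wang--Massey characterisation reads \(s_1=1\) and \(s_{2i+1}=s_{2i}+s_i\). Your relation \(s_{2i}=s_{2i-1}+s_i\) forces \(s_2=0\) and is a different condition. Write \(s_i=L(\alpha^{i-1})\) with \(L(z)=\mathrm{Tr}(\beta\alpha z)\). Your relations for \(1\le i\le n\) then say that \(L\) vanishes on the image of \(u\mapsto u+(1+\alpha)u^2\), which is the hyperplane \(\{b:\mathrm{Tr}((1+\alpha)b)=0\}\). Hence \(s_1=\mathrm{Tr}(1+\alpha)\), which is zero for \(p=x^3+x^2+1\).

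Second, and independently of this slip, the claim that any nonzero solution works is false. Irreducibility gives full linear complexity, but linear complexity \(n\) says nothing about the leading Hankel minors. Put \(\phi(\rho)=\rho+(\alpha+\alpha^2)\rho^2\). Imposing the correct relations homogeneously for \(1\le i\le n\) leaves exactly one nonzero solution, the functional vanishing on \(\operatorname{im}\phi\). Since \(1=\phi(\alpha^{-1})\), this solution has \(s_1=0\). Then \(\deg q\le n-2\), and the first quotient is not linear. Imposing the relations only for \(i\le n-1\) gives a two-dimensional solution space that still contains this bad vector. So ``pick the nonzero solution'' is ill-defined, and your verification step has no recourse when it fails.

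Three things are missing. You need the normalisation \(s_1=1\), which is the paper's row \(M_0\). You need the correct range: the relation must hold for \(i\le n-1\) and necessarily fails at \(i=n\), which is why the paper's right-hand side is \(e_n\). And you need a proof that this inhomogeneous system is consistent.

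Consistency is not a dimension count. It holds because \(\ker\phi=\{0,a^{-1}\}\) with \(a=\alpha+\alpha^2\), and \(a^{-1}=\alpha^{-1}+(\alpha+1)^{-1}\) has zero coefficient at \(\alpha^{n-1}\) (Lemma~\ref{lem:rank}). If that coefficient were one, \(\phi\) would map \(\langle1,\alpha,\ldots,\alpha^{n-2}\rangle\) onto the hyperplane \(\operatorname{im}\phi\ni1\), and every solution would have \(s_1=0\).

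You then still need every solution to make \(H_1,\ldots,H_n\) nonsingular. The paper proves this directly in Lemma~\ref{lem:hankel}; a finite-length form of the Wang--Massey theorem would also do. Citing Mesirov--Sweet covers existence only if your algorithm solves exactly this system.
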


We use the linear system of Mesirov and Sweet~\cite{MS87}, with a fixed
pivot order, setting all free variables to zero. Appendix~\ref{app:seed} gives the
algorithm and a complete proof, including the passage from its linear
system to a tridiagonal matrix. We now apply a local similarity
transformation that preserves the characteristic polynomial and adds
dependencies so that the underlying undirected graph contains a large
grid minor.

\subsection{Serpentine placement and the shear}

Throughout Sections~\ref{sec:shear} and~\ref{sec:minor}, let
\(h\ge2\), \(w\ge4\), \(0\le r<w\) and \(N=hw+r\).
Place a word \(d\in\F^N\) on \(\Gamma_{h,w,r}\) in serpentine order:
traverse each even complete row from left to right and each odd
complete row from right to left. When \(r>0\), continue into row \(h\)
in the same parity-dependent direction. The partial row starts in
column zero if \(h\) is even, and in column \(w-1\) if \(h\) is odd.
It is therefore attached at the end of the complete-row path.

This order visits every occupied cell exactly once. All horizontal
connections join consecutive cells in a row, and every turn between
rows lies in column zero or column \(w-1\), including the turn into
the partial row. The cases \(r=0\) and \(r=1\) simply give, respectively,
no partial row and a final endpoint with no horizontal connection.
Write \(d_{i,j}\) for the diagonal bit at the physical cell \((i,j)\).
\par\noindent\begin{minipage}{\linewidth}
The placed path matrix \(B\) satisfies
\begin{equation}
  B=H+V+D,
  \label{eq:decomp}
\end{equation}
where \(H\) contains all bidirectional horizontal edges,
\(V\) contains the bidirectional turns between successive occupied
rows, and \(D=\diag(d_{i,j})\). The possible horizontal connections
within the partial row belong to \(H\). The matrix \(B\) is a permutation
conjugate of \(T(d)\).
\end{minipage}\par

\begin{figure}[htbp]
\centering
\begin{tikzpicture}[x=0.55cm,y=0.43cm,
 cell/.style={circle,fill=black,inner sep=1.4pt},
 path/.style={black!85,thick}]
 \begin{scope}
  \foreach \i in {0,...,5} {
   \draw[path] (0,-\i)--(5,-\i);
   \foreach \j in {0,...,5} \node[cell] at (\j,-\i) {};
  }
  \foreach \i in {0,...,4} {
   \pgfmathtruncatemacro{\ii}{\i+1}
   \pgfmathtruncatemacro{\turn}{5*(1-mod(\i,2))}
   \draw[path] (\turn,-\i)--(\turn,-\ii);
  }
  \fill[black!15] (-0.3,-6.3) rectangle (0.3,-5.7);
  \draw[path] (0,-5)--(0,-6);
  \node[cell] at (0,-6) {};
  \node at (2.5,-8.3) {\(N=37,\ h=6,\ w=6,\ r=1\)};
 \end{scope}
 \begin{scope}[xshift=8cm]
  \foreach \i in {0,...,6} {
   \draw[path] (0,-\i)--(5,-\i);
   \foreach \j in {0,...,5} \node[cell] at (\j,-\i) {};
  }
  \foreach \i in {0,...,5} {
   \pgfmathtruncatemacro{\ii}{\i+1}
   \pgfmathtruncatemacro{\turn}{5*(1-mod(\i,2))}
   \draw[path] (\turn,-\i)--(\turn,-\ii);
  }
  \fill[black!15] (3.7,-7.3) rectangle (5.3,-6.7);
  \draw[path] (5,-6)--(5,-7)--(4,-7);
  \node[cell] at (4,-7) {};
  \node[cell] at (5,-7) {};
  \node at (2.5,-8.3) {\(N=44,\ h=7,\ w=6,\ r=2\)};
 \end{scope}
\end{tikzpicture}
\caption{The partial row is attached on the side at which the
complete-row path ends. Dark lines denote bidirectional path
connections; shaded cells form the partial row. The shear acts only
between complete rows. The \(N=44\) diagram illustrates the geometry,
independently of any choice of input polynomial.}
\label{fig:fringe}
\end{figure}

Put \(k=\lfloor(w-2)/2\rfloor\). Define masks
\(a^+,a^-\in\F^w\), zero outside columns \(1,\ldots,2k\), by
\[
 \begin{aligned}
 (a^+_{2b+1},a^+_{2b+2})&=(b\bmod2,1-(b\bmod2)),\\
 (a^-_{2b+1},a^-_{2b+2})&=(1-(b\bmod2),b\bmod2)
 \qquad(0\le b<k).
 \end{aligned}
\]
The columns are paired as \(\{2b+1,2b+2\}\), \(0\le b<k\).
In each complete row, the two cells in such a pair will form one branch
set of the grid minor. The upward mask starts \(0,1,1,0,0,1,1,0,\ldots\) on the paired
columns; the downward mask is its complement there. Both vanish at
the outer columns and at any unpaired internal column.
For each odd complete row \(0\le i<h\), define \(R\) by
\begin{align}
 (i,j)&\longrightarrow(i-1,j)
 &&\text{if \(a^+_j=1\)},\notag\\
 (i,j)&\longrightarrow(i+1,j)
 &&\text{if \(a^-_j=1\) and \(i+1<h\)}.
 \label{eq:shear}
\end{align}
Every arrow has coefficient one, and all other entries of \(R\) are zero.
Since \(2k\le w-2\), both endpoints lie in complete rows and internal
columns. No arrow of \(R\) has an endpoint in the partial row.
Let \(\mathcal R\) be the set of these arrows, written as ordered pairs
\((u,v)\) with source \(u\) and destination \(v\).

\begin{lemma}
\label{lem:nil}
The matrices in \eqref{eq:decomp} and \eqref{eq:shear} satisfy
\[
 R^2=0,\qquad RV=VR=0,\qquad RHR=RDR=0.
\]
Consequently, \(S=I+R\) is an involution, and
\begin{equation}
 A=SBS=B+RH+HR+RD+DR
 \quad\text{satisfies}\quad \chi_A=\chi_B.
 \label{eq:similarity}
\end{equation}
\end{lemma}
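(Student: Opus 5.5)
The plan is to read off each identity from the supports. I will write \(R\) in the paper's convention: an arrow \((u,v)\in\mathcal R\) with source \(u\) and destination \(v\) contributes the entry \(R_{vu}=1\), and all other entries are zero. By \eqref{eq:shear}, every source lies in an odd complete row and an internal column \(j\in\{1,\ldots,2k\}\). Every destination lies in an even complete row (\(i\pm1\) with \(i\) odd) and in the same internal column. For any matrix \(M\), a product entry \((RMR)_{zy}=\sum R_{zx}M_{xx'}R_{x'y}\) needs an arrow \(y\to x'\), a support entry \(M_{xx'}\), and an arrow \(x\to z\). Hence \(x'\) is a destination, so it lies in an even row, and \(x\) is a source, so it lies in an odd row. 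Similarly, \(RM\ne0\) forces \(M_{xx'}\ne0\) with \(x\) a source, and \(MR\ne0\) forces \(M_{xx'}\ne0\) with \(x'\) a destination.

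The identities then follow as single-sentence checks.

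\(R^2=0\) holds because \(R^2\ne0\) would need a cell that is both a destination and a source. That is impossible, since destinations lie in even rows and sources in odd rows.

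\(RDR=0\) holds by the same reasoning, since \(D\) is diagonal and so forces \(x=x'\).

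\(RHR=0\) holds because every nonzero entry \(H_{xx'}\) joins two cells in the same row, whereas \(x\) must be in an odd row and \(x'\) in an even row. The horizontal edges of the partial row are harmless here, since no arrow of \(R\) touches row \(h\).

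\(RV=VR=0\) holds because every nonzero entry of \(V\) has both endpoints in column \(0\) or column \(w-1\). This includes the turn into the partial row, as noted after the serpentine placement. Both endpoints of every arrow, by contrast, lie in internal columns \(1,\ldots,2k\le w-2\).

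For the consequences, I will expand over \(\F\). First, \(S^2=I+2R+R^2=I\), so \(S\) is an involution with \(S^{-1}=S\). This makes \(SBS=S^{-1}BS\) a similarity, and therefore \(\chi_A=\chi_B\).

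Next,
\[
SBS=B+RB+BR+RBR.
\]
Using \eqref{eq:decomp}, \(RBR=RHR+RVR+RDR\). The first and third terms vanish by the checks above, and \(RVR=(RV)R=0\). Also \(RB=RH+RD\) and \(BR=HR+DR\), since \(RV=VR=0\). Substituting these gives exactly \eqref{eq:similarity}.

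I expect no real obstacle here. The only care needed is to keep the direction convention \(R_{vu}\) consistent, since the parity argument depends on which end of an arrow is the odd-row end. The other point to confirm is the geometric fact that every row turn, including the turn into the partial row, lies in an outer column; this was established in the placement discussion.
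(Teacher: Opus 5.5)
Your proof is correct and follows the paper's own argument. The paper likewise gets \(R^2=0\) and \(RHR=RDR=0\) from the odd-row-to-even-row direction of every shear arrow, gets \(RV=VR=0\) from the outer-column support of the turns against the internal columns of \(R\), and then expands \(SBS\) over \(\F\); your entrywise bookkeeping of sources and destinations simply spells out the paper's statement that \(R\) maps odd-row coordinates to even-row ones and vanishes on the latter.
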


\begin{proof}
The matrix \(R\) maps odd-row coordinates to even-row coordinates
and vanishes on the latter. This proves \(R^2=0\).
Both \(H\) and \(D\) preserve row parity, including on the partial
row, so \(RHR=RDR=0\).
The matrix \(V\) is supported on columns zero and \(w-1\), whereas every
source and destination of \(R\) is in an internal column.
Hence \(RV=VR=0\), and therefore \(RBR=0\).
Over \(\F\), we have \(S^2=I\); expanding \(SBS\) now gives
\eqref{eq:similarity}.
\end{proof}

\subsection{The nonzero entries}

We next determine the support of \(A\). This also verifies locality:
similarity alone would not prevent nonlocal entries.

\begin{lemma}
\label{lem:support}
Consider a gap between consecutive complete rows, with odd source row
\(o\), even destination row \(v\), and mask \(a=a^+\) or \(a^-\)
according to whether \(v=o-1\) or \(v=o+1\).
For \(0\le j<w-1\), both entries
\begin{equation}
 (o,j)\longrightarrow(v,j+1),\qquad
 (o,j+1)\longrightarrow(v,j)
 \label{eq:four}
\end{equation}
have coefficient \(a_j+a_{j+1}\) in \(A\). The only other possible
added entry at column \(j\) is
\begin{equation}
 (o,j)\longrightarrow(v,j)
 \quad\text{with coefficient}\quad
 a_j(d_{o,j}+d_{v,j}).
 \label{eq:optional}
\end{equation}
After these additions in \(\F\), all listed nonzero entries are
distinct. They are precisely the entries added to \(B\), and no entry
of \(B\) is removed.
\end{lemma}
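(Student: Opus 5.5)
The plan is to expand the four correction terms in \eqref{eq:similarity} entry by entry, using the convention that \(A_{xy}=1\) encodes \(y\to x\). An arrow \((u,v)\in\mathcal R\) therefore means \(R_{vu}=1\). By Lemma~\ref{lem:nil} we already have \(A=B+RH+HR+RD+DR\), so only these four products need to be computed.

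First I will record that every nonzero column of \(R\) is indexed by an odd complete row \(o\) and an internal column \(j\) with \(a_j=1\). Every nonzero row of \(R\) is indexed by the even row \(v=o\mp1\) in the same column. Here \(a\) is \(a^+\) or \(a^-\) according to the gap. Both \(H\) and \(D\) preserve rows. Hence each product only produces entries from row \(o\) to row \(v\) for a single gap, and different gaps give different (source row, destination row) pairs.

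Next I compute the four terms within one gap. We have \((RH)_{x,y}=\sum_z R_{xz}H_{zy}\). Here \(z=(o,j)\) has \(a_j=1\), \(x=(v,j)\), and \(y=(o,j\pm1)\) is a horizontal neighbour of \(z\). This neighbour exists because \(j\) is internal and row \(o\) is complete. So \(RH\) contributes \(a_j\) to \((o,j\pm1)\to(v,j)\). Symmetrically, \(HR\) contributes \(a_j\) to \((o,j)\to(v,j\pm1)\), since \(H\) is full along the complete row \(v\). Collecting terms for the entry \((o,j)\to(v,j+1)\), the \(HR\) term gives \(a_j\) and the \(RH\) term gives \(a_{j+1}\). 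The entry \((o,j+1)\to(v,j)\) receives the same two contributions. Both therefore have coefficient \(a_j+a_{j+1}\), and for \(j\) outside \(1,\ldots,2k\) the vanishing of the masks is automatically built in. The diagonal terms give \(RD\): \(a_jd_{o,j}\) and \(DR\): \(d_{v,j}a_j\) on \((o,j)\to(v,j)\), which is \eqref{eq:optional}. No other pattern of indices can occur, because \(H\) moves only one column and \(D\) none.

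Finally I check distinctness and that nothing in \(B\) is cancelled; this is the only step needing some care.

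\begin{itemize}
\item Within a gap, the listed entries are indexed by distinct ordered pairs of cells. Across gaps they differ in their row pair, as noted above.
\item The entries of \(B\) linking rows \(o\) and \(v\) are the turns in \(V\). These lie in columns \(0\) or \(w-1\) and are vertical.
\item The entries in \eqref{eq:four} are diagonal offsets, so they never coincide with an entry of \(B\).
\item The entry in \eqref{eq:optional} needs \(a_j=1\), hence an internal column, so it also never coincides with an entry of \(B\).
\item All remaining entries of \(B\) are within a row, or in the partial row, which \(R\) does not touch.
\end{itemize}

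Hence the added entries fall on zero positions of \(B\). They are exactly the nonzero listed coefficients, and no entry of \(B\) is removed.
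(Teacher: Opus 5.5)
Your proposal is correct and follows essentially the same route as the paper. You expand \(RH\), \(HR\), \(RD\) and \(DR\) to obtain the contributions \(a_j\) and \(a_{j+1}\) to each cross-column entry and \(a_j(d_{o,j}+d_{v,j})\) to the vertical one; you then exclude overlaps with \(B\) because gaps join distinct row pairs, the turns of \(B\) lie in outer columns while new vertical entries are internal, and cross-column entries change the column. Your index bookkeeping is more explicit than the paper's terse version, but the argument is the same.
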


\begin{proof}
For the first entry in \eqref{eq:four}, the product \(RH\) contributes
\(a_{j+1}\) and \(HR\) contributes \(a_j\). The second entry has
the same two contributions in reverse order. Thus these arrows are
present exactly where the mask changes between adjacent columns.
In particular, the contributions cancel when the adjacent mask bits
are equal. The terms \(RD\) and \(DR\) give the coefficient in
\eqref{eq:optional}. Lemma~\ref{lem:nil} excludes any further terms.

Different gaps join different pairs of rows. Within one gap, distinct
transitions give distinct cross-column entries, and none is vertical.
The horizontal and diagonal entries of \(B\) stay within a row, while
its inter-row entries lie in outer columns. Every new vertical entry
is internal, and every new cross-column entry changes the column.
Hence no added entry coincides with an entry of \(B\).
\end{proof}

Every added entry joins adjacent complete rows and changes the column
by at most one. Its endpoints therefore exist in the region. The partial
row and its boundary turn are unchanged, including when \(r=1\).
Thus \(A\) is Moore-local with a null boundary, and its retained
bidirectional spanning path makes \(D(A)\) strongly connected.
The coefficient calculation also applies to an arbitrary mask in each
complete-row gap, provided its outer bits are zero and every shear
points from an odd row to an even row. We will use this observation
for both the transport construction and the finite-width refinement.

\begin{figure}[htbp]
\centering
\begin{tikzpicture}[x=0.85cm,y=1.0cm,>=Stealth,
  cell/.style={circle,fill=black,inner sep=2pt},
  base/.style={gray,<->,thin},
  added/.style={->,thick,black!85}]
  \begin{scope}
    \foreach \y in {0,1.5} {
      \foreach \x in {0,...,3} \node[cell] at (\x,\y) {};
      \foreach \x in {0,...,2}
        \draw[base] (\x+0.08,\y)--(\x+0.92,\y);
    }
    \foreach \x in {1,2}
      \draw[added] (\x,0.10)--(\x,1.40);
    \foreach \x/\bit in {0/0,1/1,2/1,3/0}
      \node at (\x,-0.35) {\(\bit\)};
    \node[left] at (-0.18,1.5) {even row};
    \node[left] at (-0.18,0) {odd row};
    \node at (1.5,-0.85) {(a) Adjacent upward shears};
  \end{scope}
  \begin{scope}[xshift=6.5cm]
    \foreach \y in {0,1.5} {
      \foreach \x in {0,...,3} \node[cell] at (\x,\y) {};
      \foreach \x in {0,...,2}
        \draw[base] (\x+0.08,\y)--(\x+0.92,\y);
    }
    \foreach \x in {0,2} {
      \draw[added] (\x+0.04,0.08)--(\x+0.96,1.42);
      \draw[added] (\x+0.96,0.08)--(\x+0.04,1.42);
    }
    \foreach \x in {1,2}
      \draw[->,dashed,thick] (\x,0.10)--(\x,1.40);
    \node at (1.5,-0.85) {(b) Dependencies after cancellation};
  \end{scope}
\end{tikzpicture}
\caption{The mask \(0,1,1,0\) on two consecutive pairs of columns.
Grey edges belong to the bidirectional path. The dark cross-column
arrows remain at mask transitions; the cross-column entries between the
middle columns cancel.
Each dashed vertical arrow occurs only when the diagonal bits at its
endpoints differ. A downward gap has the reflected pattern.}
\label{fig:shear}
\end{figure}

\section{Grid minors and synthesis bounds}
\label{sec:minor}

\subsection{A grid minor independent of the diagonal}

Put \(k=\lfloor(w-2)/2\rfloor\). For \(0\le i<h\) and \(0\le b<k\), define
\begin{equation}
 \mathcal B_{i,b}=\{(i,2b+1),(i,2b+2)\}.
 \label{eq:branch}
\end{equation}
These sets lie in the complete rows and avoid the two outer columns.

\begin{proposition}
\label{prop:minor}
For every diagonal word \(d\), the sets in \eqref{eq:branch} form
the branch sets of an \(h\times k\) grid minor in \(U(A)\).
\end{proposition}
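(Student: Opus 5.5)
We verify the three defining properties of a grid minor directly, using only edges of \(B\) that survive in \(A\) together with the cross-column entries of Lemma~\ref{lem:support}. Whether the vertical entries \eqref{eq:optional} are present depends on \(d\), so we do not use them. First, the sets \(\mathcal B_{i,b}\) are pairwise disjoint: they occupy distinct rows or distinct column pairs \(\{2b+1,2b+2\}\). Each set is connected, because \((i,2b+1)\) and \((i,2b+2)\) are consecutive in row \(i\) and are joined by an entry of \(H\). Lemma~\ref{lem:support} states that no entry of \(B\) is removed, so this edge lies in \(U(A)\) for every \(d\).

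Horizontal adjacency of grid positions, between \(\mathcal B_{i,b}\) and \(\mathcal B_{i,b+1}\), is supplied by the path edge joining \((i,2b+2)\) and \((i,2b+3)\). This is again an entry of \(H\) within the complete row \(i\), and it is retained in \(A\).

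Vertical adjacency, between \(\mathcal B_{i,b}\) and \(\mathcal B_{i+1,b}\) for \(0\le i<h-1\), is the main step. Exactly one of \(i,i+1\) is odd; call it \(o\) and the other \(v\). The gap between them is governed by \(a^+\) if \(v=o-1\) and by \(a^-\) if \(v=o+1\). When \(o\) is odd and \(o+1<h\), the downward arrows of \eqref{eq:shear} are defined, so every gap between consecutive complete rows carries one of the two masks. Apply Lemma~\ref{lem:support} at \(j=2b+1\). The coefficient of the cross-column entries \((o,2b+1)\to(v,2b+2)\) and \((o,2b+2)\to(v,2b+1)\) is \(a_{2b+1}+a_{2b+2}\). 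For both masks the pair \((a_{2b+1},a_{2b+2})\) equals \((b\bmod 2,1-(b\bmod2))\) or its complement, so the sum is \(1\) in \(\F\). Hence \(A\) has a nonzero entry between a cell of \(\mathcal B_{o,b}\) and a cell of \(\mathcal B_{v,b}\), and this gives an edge of \(U(A)\) joining the two branch sets. The coefficient does not involve \(d\), so the witnesses are independent of the diagonal.

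The only delicate point is confirming that these cross-column entries are genuinely present and not cancelled by coincident entries. Lemma~\ref{lem:support} already asserts that the listed nonzero entries are distinct and do not coincide with entries of \(B\), so no further cancellation is possible. It remains to note that the branch sets lie in complete rows, because \(2b+2\le 2k\le w-2\). Collecting the three properties gives the \(h\times k\) grid minor.
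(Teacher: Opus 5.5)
Your proof is correct and follows essentially the same route as the paper. Retained horizontal path edges give connectivity within each branch set and between neighbouring sets in a row. The cross-column entries of Lemma~\ref{lem:support} at \(j=2b+1\), with \(d\)-independent coefficient \(a_{2b+1}+a_{2b+2}=1\), supply the vertical adjacencies; the paper fixes the single witness \(\{(i,2b+2),(i+1,2b+1)\}\), whereas you allow either of the two entries.

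One small wording point: \(2b+2\le w-2\) shows that the pairs avoid the outer columns. Membership in complete rows comes simply from \(i<h\).
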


\begin{proof}
The sets are pairwise disjoint, and each is connected by a retained
horizontal edge. Consecutive sets in the same row are joined by
\[
 \{(i,2b+2),(i,2b+3)\}.
\]
For consecutive rows, use the edge
\begin{equation}
 \{(i,2b+2),(i+1,2b+1)\}.
 \label{eq:crossbranch}
\end{equation}
Both masks change between columns \(2b+1\) and \(2b+2\).
Hence \eqref{eq:four} gives the arrow
\((i+1,2b+1)\to(i,2b+2)\) if \(i\) is even, and
\((i,2b+2)\to(i+1,2b+1)\) if \(i\) is odd.
Each has coefficient one after cancellation. The inequality
\(2b+2\le w-2\) ensures that the whole pair lies in internal
columns, for odd as well as even widths.
The required edges therefore exist independently of \(d\).
Contracting each branch set and deleting surplus edges gives the
claimed grid.
\end{proof}

This minor is specified by \(2hk\) vertices, \(hk\) edges within
the branch sets, and \(h(k-1)+(h-1)k\) edges between branch sets.
The partial row is outside these branch sets but belongs to the
retained spanning path, and hence to the same strongly connected system.

\begin{figure}[htbp]
\centering
\begin{tikzpicture}[x=0.70cm,y=0.60cm,
  cell/.style={circle,fill=black,inner sep=1.7pt},
  witness/.style={thick,black!85}]
  \foreach \r in {0,...,5} {
    \foreach \j in {0,1} {
      \pgfmathtruncatemacro{\ca}{2*\j+1}
      \pgfmathtruncatemacro{\cb}{2*\j+2}
      \fill[black!9,rounded corners=3pt]
        (\ca-0.20,-\r-0.18) rectangle (\cb+0.20,-\r+0.18);
      \draw[witness] (\ca,-\r)--(\cb,-\r);
    }
    \draw[witness] (2,-\r)--(3,-\r);
    \foreach \c in {0,...,5} \node[cell] at (\c,-\r) {};
  }
  \foreach \r in {0,...,4} {
    \pgfmathtruncatemacro{\rr}{\r+1}
    \draw[witness] (2,-\r)--(1,-\rr);
    \draw[witness] (4,-\r)--(3,-\rr);
  }
  \draw[->,thick] (6,-2.5)--(7.5,-2.5);
  \begin{scope}[xshift=6.2cm]
    \foreach \r in {0,...,5} {
      \draw[witness] (0,-\r)--(1.5,-\r);
      \node[cell] at (0,-\r) {};
      \node[cell] at (1.5,-\r) {};
    }
    \draw[witness] (0,0)--(0,-5);
    \draw[witness] (1.5,0)--(1.5,-5);
  \end{scope}
\end{tikzpicture}
\caption{The \(6\times2\) minor in a \(6\times6\) physical grid.
Each shaded pair is one branch set. Only the witness edges in the
underlying undirected graph are shown. Their presence is independent
of the diagonal word and of the optional vertical edges.}
\label{fig:minor}
\end{figure}

\subsection{Edges and degrees}

Each mask selects one column in every branch pair, so
\begin{equation}
 e=|\mathcal R|=(h-1)k.
 \label{eq:shearcount}
\end{equation}
Define the integer
\[
 \nu(d)=\sum_{(u,v)\in\mathcal R}(d_u\mathbin{\oplus}d_v),
 \qquad
 \delta=\one_{\{h\text{ even and }k\text{ even}\}},
\]
where \(\oplus\) denotes binary addition, interpreted as zero or one
before summation.

\begin{proposition}
\label{prop:counts}
The construction satisfies
\begin{align}
 m(A)&=2N-2+2(h-1)(k+1)-2\delta+\nu(d),\label{eq:edgecount}\\
 \ell(A)&=\sum_{u\in\Gamma_{h,w,r}}d_u,\notag\\
 |E(U(A))|&=N-1+2(h-1)(k+1)-2\delta+\nu(d),\notag
\end{align}
with \(0\le\nu(d)\le(h-1)k\). In particular,
\[
 m(A)\le2N-2+(h-1)(3k+2)-2\delta<\frac72N.
\]
Every vertex has indegree and outdegree at most six, and \(U(A)\)
also has maximum degree at most six.
\end{proposition}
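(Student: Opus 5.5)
The plan is to compute the support of \(A\) exactly from Lemma~\ref{lem:support}, gap by gap, and then read off all four quantities. First, \(B\) is a permutation conjugate of \(T(d)\), so it contributes \(2(N-1)\) directed nonself entries, \(N-1\) undirected edges and diagonal \(\ell=\sum_u d_u\). By Lemma~\ref{lem:nil} the diagonal of \(A\) equals that of \(B\): \(RD\) and \(DR\) are supported on pairs of cells in rows of opposite parity, and so are \(RH\) and \(HR\). This gives the formula for \(\ell(A)\). Lemma~\ref{lem:support} says that every added entry is new and no entry of \(B\) is removed. Moreover, every added entry points from an odd row to an even row, so no added entry is the reverse of another or of an entry of \(B\). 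Hence each added directed entry contributes exactly one new edge of \(U(A)\), and \(|E(U(A))|=m(A)-(N-1)\). It therefore suffices to prove \eqref{eq:edgecount}.

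For \eqref{eq:edgecount}, I count transitions of each mask along columns \(0,\ldots,w-1\); each transition \(a_j\ne a_{j+1}\) yields exactly two directed cross-column entries by \eqref{eq:four}. On the paired columns, \(a^+\) reads \(0,1,1,0,0,1,\ldots\), so it has one transition inside each pair, none between pairs, none at the left end (since \(a^+_1=0\)), and one at the right end iff \(a^+_{2k}=1\), i.e.\ iff \(k\) is odd. Thus \(a^+\) has \(k+1-[k\text{ even}]\) transitions. Since \(a^-_1=1\) and \(a^-_{2k}=1\) iff \(k\) is even, \(a^-\) has \(k+1+[k\text{ even}]\) transitions. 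Upward gaps correspond to odd rows below \(h\), giving \(\lfloor h/2\rfloor\) of them, and downward gaps number \(\lfloor (h-1)/2\rfloor\). Summing gives
\[
2(h-1)(k+1)+2[k\text{ even}]\bigl(\lfloor (h-1)/2\rfloor-\lfloor h/2\rfloor\bigr)=2(h-1)(k+1)-2\delta
\]
cross entries. By \eqref{eq:optional}, the vertical entries number exactly \(\nu(d)\), one per arrow of \(\mathcal R\) whose endpoint bits differ. Each mask selects exactly one column of every pair, so \(0\le\nu\le e=(h-1)k\). The strict bound follows from \(k\le (w-2)/2\):
\[
(h-1)(3k+2)\le (h-1)\bigl(\tfrac32w-1\bigr)<\tfrac32hw\le\tfrac32N .
\]

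The degree bounds are the main obstacle; I expect a case analysis on the column. A cell has at most two path neighbours. It lies in at most two gaps (above and below), and these use complementary masks on the paired columns, one \(a^+\) and one \(a^-\). Within one gap, a cell in column \(j\) gains one cross neighbour for each transition at \(j-1/j\) or \(j/j+1\). It gains a vertical neighbour only if \(a_j=1\) in that gap, and by complementarity this happens in at most one of its two gaps. The worst case I anticipate is column \(1\), where \(a^-=0,1,0\) around it gives two cross neighbours plus a possible vertical one in that gap. There, however, \(a^+_1=0\), so the other gap contributes only the single transition at \(1/2\), giving \(2+3+1=6\). In general a column has at most one transition on each side, and the masks alternate in pairs. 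So I would check the residues of \(j\) modulo \(4\), the outer columns \(0,w-1\) (where \(a^-\) may create a transition at \(0/1\) and a turn edge may also be present), the first and last complete rows (only one gap), and unpaired column \(2k+1\). In each case I would verify that in-, out- and undirected neighbours number at most six. Because added entries are odd\(\to\)even, the indegree and outdegree are each at most the undirected degree, so bounding the undirected degree by six suffices.
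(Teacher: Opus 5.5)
Your counts of \(m(A)\), \(\ell(A)\) and \(|E(U(A))|\), and your \(7N/2\) estimate, are correct and follow the paper's proof. The paper uses the same decomposition: \(k+a_1+a_{2k}\) transitions per mask, \(\lfloor h/2\rfloor\) upward and \(\lfloor (h-1)/2\rfloor\) downward gaps, \(\nu(d)\) vertical entries, and the odd-to-even orientation for the undirected count. One justification should be corrected. That no added entry reverses an entry of \(B\) follows from the symmetry of \(B\) together with Lemma~\ref{lem:support}. It does not follow from the orientation alone, since the path turns join odd and even rows in both directions.

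The gap is in the degree bound, which you leave as a case analysis still to be carried out. The general principle you offer for it does not suffice: at most one transition on each side of a column in each gap allows four cross-column neighbours over the two gaps, giving only \(2+4+1=7\).

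The missing step is the one the paper uses, and you already derived its ingredients in your counting paragraph. Each mask is constant across the boundary between consecutive pairs. So in each gap a cell of a paired column gets exactly one cross-column neighbour, from the internal transition of its own pair. The only other transitions lie at \(0/1\) and \(2k/(2k+1)\). Each of these occurs in at most one of the (at most two) gaps meeting a given cell, because the two masks are complementary there: \(a^-_1=1\ne a^+_1\) and \(a^+_{2k}+a^-_{2k}=1\).

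Hence every cell acquires at most three new cross-column neighbours in total, and at most one in an unpaired column. It has at most one vertical neighbour by complementarity and at most two path neighbours, which gives six. No residue-by-residue check modulo \(4\) is needed.
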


\begin{proof}
Within the paired interval, a mask changes between the two columns of
each pair and stays constant between consecutive pairs. Its only further
possible transitions are between columns zero and one, and between
columns \(2k\) and \(2k+1\). Their indicators are \(a_1\) and \(a_{2k}\).
Thus each mask has \(k+a_1+a_{2k}\) transitions.
If \(k\) is odd, exactly one of \(a_1,a_{2k}\) is one for each mask.
If \(k\) is even, both are zero in the upward mask and both are one
in the downward mask. There are
\(\lfloor h/2\rfloor\) upward gaps and
\(\lfloor(h-1)/2\rfloor\) downward gaps. The total number of
transitions is therefore
\[
 T=(h-1)(k+1)-\delta.
\]
The path contributes \(2N-2\) directed nonself entries. Each transition
adds two entries in \eqref{eq:four}, and exactly \(\nu(d)\) entries
in \eqref{eq:optional} are present. Lemma~\ref{lem:support} shows that
they are distinct and that the diagonal is unchanged.
All added arrows point from odd rows to even rows, so none is the
reverse of another. This proves the three formulas.

Using \(N=hw+r\) and \(w\ge2k+2\), the difference between
\(7N/2\) and the stated upper bound is at least
\[
 \frac32N+2-(h-1)(3k+2)+2\delta
 \ge h+3k+4+\frac32r+2\delta>0.
\]
This gives the strict connection bound.

A cell in a paired column has one cross-column neighbour across the
internal edge of its pair in each adjacent complete-row gap. There
are at most two such gaps. At columns one and \(2k\), an outer
transition may add one further neighbour. The masks in the two
adjacent gaps are complementary at that column, so this happens in
at most one gap. Thus a paired-column cell has at most three new
cross-column neighbours. An unpaired column is adjacent to at most
one end of the paired interval and has at most one such neighbour.

For a row with two adjacent complete-row gaps, the masks in those gaps
are complementary on columns \(1,\ldots,2k\) and both vanish elsewhere.
Hence each cell is incident with at
most one shear arrow, so the shear arrows form a matching when their
directions are ignored. Each cell is therefore incident with at most
one optional vertical edge. Together with at most two path neighbours, this gives
maximum degree at most \(2+3+1=6\) in \(U(A)\). Each directed degree is bounded by the
undirected degree. Cells in the partial row have only their path
connections.
\end{proof}

The bounds on \(\nu(d)\) are attained within the class of all diagonal
words, since the shear pairs form a matching. These attainment
statements impose no irreducibility assumption. Exact per-cell degree
formulas are recorded in Appendix~\ref{app:degrees}.

\subsection{The synthesis algorithm}

\begin{proof}[Proof of Theorem~\ref{thm:paired}]
Choose \(h,w,r\) as in \eqref{eq:canonical-region}.
Since \(w^2\le N<(w+1)^2\), we have
\[
 w\ge6,\qquad h\ge w,\qquad
 H_N=h+\one_{\{r>0\}}=\lceil N/w\rceil\le w+2.
\]
The bounding rectangle therefore has aspect ratio
\[
 1\le H_N/w\le1+2/w\le4/3.
\]
Moreover, with \(k=\lfloor(w-2)/2\rfloor\),
\[
 2\le t=\lfloor\sqrt N/3\rfloor=\lfloor w/3\rfloor\le k\le h.
\]
The equality of the two floors follows from
\(w\le\sqrt N<w+1\), and the inequality follows from
\(w/3\le(w-2)/2\) for \(w\ge6\).

Apply Proposition~\ref{prop:seed} to obtain \(T(d)\), and place it
in serpentine order on \(\Gamma_{h,w,r}\). Define \(R\) by
\eqref{eq:shear} and output the support given in
Lemma~\ref{lem:support}. Lemma~\ref{lem:nil} proves that the
resulting matrix has characteristic polynomial \(p\).
Locality and strong connectivity follow from
Lemma~\ref{lem:support}; the minor and degree conclusions follow
from Propositions~\ref{prop:minor} and~\ref{prop:counts}.
Restrict the rectangular minor to its first \(t\) rows and first
\(t\) columns to obtain the square witness.

The seed algorithm uses \(O(N^3)\) bit operations and \(O(N^2)\)
bits of space. The remaining construction lists \(O(N)\)
matrix entries and \(O(N)\) minor-witness entries.
With binary coordinate indices, the output has
\(O(N\log N)\) bits. Sorting these entries costs at most
\(O(N\log^2N)\) bit operations using a comparison sort.
Even a direct implementation that searches an \(O(N)\)-entry
list at each insertion costs \(O(N^2\log N)\), within the total
cubic bound. The same bounds cover writing a dense matrix if
desired. Each local rule is a nine-bit vector.
\end{proof}

For squares of even side length, the edge bound can be sharpened.

\begin{corollary}
\label{cor:squares}
If \(N=s^2\) with even \(s\ge6\), the paired-shear algorithm in
Theorem~\ref{thm:paired} uses the complete \(s\times s\) square and
satisfies
\begin{equation}
 m(A)\le\frac72N-\frac52\sqrt N-1-2\delta,
 \qquad \delta=\one_{\{s\equiv2\pmod4\}}.
 \label{eq:squarecount}
\end{equation}
\end{corollary}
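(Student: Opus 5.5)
The plan is to specialise Proposition~\ref{prop:counts} to the square case and then simplify. The first step is to identify the region. For \(N=s^2\), the canonical parameters \eqref{eq:canonical-region} give \(w=\lfloor\sqrt N\rfloor=s\), \(h=\lfloor N/s\rfloor=s\) and \(r=0\). So \(\Gamma_N=V_{s,s}\) is the complete \(s\times s\) square with no partial row, and the paired-shear algorithm of Theorem~\ref{thm:paired} runs on it. Since \(s\) is even, \(k=\lfloor(s-2)/2\rfloor=(s-2)/2\) exactly; this is the point where evenness of \(s\) matters.

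Next I would determine \(\delta\). By definition \(\delta=\one_{\{h\text{ even and }k\text{ even}\}}\). Here \(h=s\) is even, and \(k=(s-2)/2\) is even exactly when \(s\equiv2\pmod4\). This agrees with the \(\delta\) in \eqref{eq:squarecount}.

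Then I would substitute into the bound \(m(A)\le 2N-2+(h-1)(3k+2)-2\delta\) from Proposition~\ref{prop:counts}. That bound comes from \eqref{eq:edgecount} together with \(\nu(d)\le(h-1)k\). With \(k=(s-2)/2\) we get \(3k+2=(3s-2)/2\), and hence
\[
(s-1)(3k+2)=\tfrac12(3s^2-5s+2).
\]
Adding \(2s^2-2\) gives
\[
\tfrac72s^2-\tfrac52s-1-2\delta=\tfrac72N-\tfrac52\sqrt N-1-2\delta,
\]
which is \eqref{eq:squarecount}.

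There is no real obstacle. The only care needed is in checking that the canonical region really is the full square with \(r=0\) and that the parity conventions for \(\delta\) match. The substance of the count was already carried out in Proposition~\ref{prop:counts}, so the remaining work is arithmetic.
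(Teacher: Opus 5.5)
Your proposal is correct and follows the paper's proof: you identify \(h=w=s\), \(r=0\) and \(k=(s-2)/2\), check that the parity condition defining \(\delta\) reduces to \(s\equiv2\pmod4\), and specialise Proposition~\ref{prop:counts}. The only cosmetic difference is the order of substitution. The paper first writes the exact count \(m(A)=3N-\sqrt N-2-2\delta+\nu(d)\) and then applies \(\nu(d)\le(s-1)(s-2)/2\), whereas you substitute directly into the combined bound \(2N-2+(h-1)(3k+2)-2\delta\); the arithmetic agrees.
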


\begin{proof}
Here \(h=w=s\), \(r=0\), \(k=(s-2)/2\) and
\(e=(s-1)(s-2)/2\). Since \(s\) is even, \(k\) is even exactly
when \(s\equiv2\pmod4\). Equation~\eqref{eq:edgecount} gives
\[
 m(A)=3N-\sqrt N-2-2\delta+\nu(d)
 \le\frac72N-\frac52\sqrt N-1-2\delta.
\]
\end{proof}

The similarity transformation is used to synthesise the matrix.
Once the local rules have been written, each time step applies
\(A\) directly in the physical cell coordinates. The automaton
has exactly \(N\) state bits and uses no intermediate global
coordinate conversion.

\subsection{A sharp lower bound for the support}

The retained path and grid minor impose a connection cost independently
of the characteristic polynomial.

\begin{theorem}
\label{thm:lower}
Let \(D\) be a simple directed graph without loops on \(N\) vertices.
Suppose that \(D\) contains a bidirectional spanning path and that
its underlying simple graph contains an \(h\times k\) grid minor.
Then
\begin{equation}
 |E(D)|\ge2N-2+(h-1)(k-1).
 \label{eq:lower}
\end{equation}
For the parameters of Theorem~\ref{thm:main}, equality is attained by
an orthogonal-neighbour support on \(\Gamma_N\) with indegree and
outdegree at most three and underlying maximum degree at most four.
\end{theorem}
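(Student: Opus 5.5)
The lower bound comes from two counts on the underlying graph \(U=U(D)\): the bidirectional spanning path supplies \(2N-2\) directed edges, and the grid minor forces at least \((h-1)(k-1)\) further undirected edges beyond a spanning tree. For the first, the spanning path gives \(N-1\) edges of \(U\), each realised by two arcs of \(D\). Every other edge of \(U\) is realised by at least one arc. Hence
\[
 |E(D)|\ge 2(N-1)+\bigl(|E(U)|-(N-1)\bigr).
\]
It therefore suffices to show \(|E(U)|-N+1\ge(h-1)(k-1)\). Since \(U\) is connected, the left side is the cycle rank of \(U\).

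I would prove that the cycle rank cannot increase when passing to a minor. Deleting an edge lowers \(|E|\) by one and lowers the number of components by at most one, so it does not increase \(|E|-|V|+c\). Deleting a vertex together with its incident edges is handled the same way. Contracting a non-loop edge in the multigraph sense leaves \(|E|-|V|\) and \(c\) unchanged. Removing the resulting loops and parallel edges only lowers the rank. For the \(h\times k\) grid, the rank is
\[
 h(k-1)+k(h-1)-hk+1=(h-1)(k-1).
\]
Combining these gives \eqref{eq:lower}. This monotonicity step is the only conceptual point; the rest is bookkeeping.

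For equality, I would start from the serpentine bidirectional path on \(\Gamma_N\) and take as branch sets a partition of each complete row \(i<h\) into \(k\) consecutive horizontal intervals \(I_{i,0},\dots,I_{i,k-1}\), using the same column partition in every row. The first interval should contain column \(0\), the last column \(w-1\), and every interval should have length at least two; this is possible since \(w\ge 2k+2\).
\begin{itemize}
\item Each interval is connected by path edges, and consecutive intervals in a row are joined by a path edge.
\item Between rows \(i\) and \(i+1\), the path turn lies in column \(0\) or \(w-1\). It therefore joins one of the two outer branch-set pairs.
\item For the remaining \(k-1\) vertical pairs in that gap, I add one single arc, directed from row \(i\) to row \(i+1\), in a chosen internal column of that interval. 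The chosen column must avoid columns \(0\) and \(w-1\), which is possible because each interval has length at least two.
\end{itemize}
This adds exactly \((h-1)(k-1)\) arcs. They are vertical, hence orthogonal-neighbour, and distinct from path edges, because path turns use only outer columns.

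It remains to check that equality holds and that the degree bounds are met. Each added arc gives a new undirected edge realised by one arc, so the count is \(2N-2+(h-1)(k-1)\) exactly. The partial row is untouched. Every cell has at most two path neighbours and at most one added vertical neighbour above and one below, so the underlying degree is at most four. Since all added arcs point downward, each cell receives at most one added in-arc and sends at most one added out-arc. Together with the path, indegree and outdegree are therefore at most three. The only care needed is choosing the added columns so that no cell also carries a path turn, which the internal-column choice ensures.
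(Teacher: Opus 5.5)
Your proposal is correct and matches the paper's proof. The lower bound uses the same two ingredients: cycle-rank monotonicity under minors and the count \(|E(D)|\ge|E(U)|+(N-1)\). Your equality construction differs only cosmetically: you use intervals of length at least two with the added downward arcs in internal columns, whereas the paper uses singleton intervals \(\{b\}\) and allows one added arc in column \(0\).

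One wording slip: deleting an edge \emph{raises} the number of components by at most one (it never lowers it). It is this fact that keeps \(|E|-|V|+c\) from increasing.
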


\begin{proof}
For a finite simple graph \(G\), write
\(\beta(G)=|E(G)|-|V(G)|+c(G)\), where \(c(G)\) is its number
of components. This cycle rank cannot increase under taking a minor.
Deleting a bridge preserves it, while deleting another edge decreases
it by one. Removing an isolated vertex also preserves it.
Contracting an edge with \(t\) common neighbours removes \(1+t\)
edges and one vertex after parallel edges are merged, and hence
decreases the cycle rank by \(t\). These operations also cover
vertex deletion. The \(h\times k\) grid has cycle rank
\((h-1)(k-1)\).

Let \(U\) be the underlying graph of \(D\), and let \(b\) count
the unordered pairs carrying both directions. Then
\(|E(D)|=|E(U)|+b\). The spanning path makes \(U\) connected and
gives \(b\ge N-1\), while the minor gives
\(|E(U)|\ge N-1+(h-1)(k-1)\). This proves \eqref{eq:lower}.

For equality on \(\Gamma_N\), retain the bidirectional serpentine
path. Here \(k\ge2\). Partition each complete row into the intervals
\[
 I_b=\{b\}\quad(0\le b<k-1),\qquad
 I_{k-1}=\{k-1,\ldots,w-1\},
\]
and choose representatives \(c_b=b\) for \(b<k-1\) and
\(c_{k-1}=w-1\). Across each complete-row gap, join the cells in
each representative column. Both outer columns are represented, so
exactly one of these \(k\) edges is already the path turn.
Orient the remaining \(k-1\) new edges downwards. The resulting
directed count is exactly \(2N-2+(h-1)(k-1)\).

The sets \(\{(i,j):j\in I_b\}\) form connected, disjoint branch
sets. Their horizontal adjacencies are path edges, and their vertical
adjacencies are the representative-column edges. Thus they certify the
full \(h\times k\) grid minor. An internal-column cell has at most
two horizontal path neighbours and one added incoming and one added
outgoing edge. An outer-column cell has at most one horizontal neighbour
and one vertical neighbour on each side. Its path turn is on at most
one side, so each directed degree is at most three. The partial row
retains only its path edges. All edges are orthogonal-neighbour edges,
giving underlying maximum degree at most four.
\end{proof}

For the near-square parameters, \(w\le h\le w+2\), \(r<w\)
and \(k=w/2+O(1)\). Consequently the lower bound is
\(5N/2-O(\sqrt N)\). Under these path and grid-minor requirements, the leading coefficient
in a uniform linear upper bound on the number of directed dependencies
is therefore at least \(5/2\).
The equality construction specifies the support, but does not show how
to choose the diagonal to realise every supplied irreducible polynomial.
Thus attaining this edge bound without prescribing the characteristic
polynomial does not determine the optimal bound when that polynomial
is prescribed.

\subsection{A uniform bound from a transport potential}
\label{sec:transport}

We now allow the branch sets to vary from row to row and choose the
shear mask from the diagonal differences in each gap. The local support
formulas remain those of Lemma~\ref{lem:support}. To control the total
number of connections, we measure the displacement of the columns at
which the branch sets meet the preceding gap.

For \(w\ge6\), put \(k=\lfloor(w-2)/2\rfloor\). For a binary word
\(u\) of length \(w\), define its prefix balance by
\[
 B_j(u)=2\sum_{t<j}u_t-j\qquad(0\le j\le w).
\]
Let \(\mathcal L_w\) be the set of words with zero first and last
bits, exactly \(k\) ones, and \(-3\le B_j(u)\le3\) at every prefix.
If the one positions are \(u_0^*<\cdots<u_{k-1}^*\), define the
\emph{transport potential}
\begin{equation}
 D(u)=\sum_{b=0}^{k-1}|u_b^*-(2b+1)|.
 \label{eq:transport-potential}
\end{equation}
Thus \(D(u)\) is the total displacement from the reference columns
\(1,3,\ldots,2k-1\). It is nonnegative and can be computed one
column at a time, since
\begin{equation}
 D(u)=\sum_{j=1}^{w-1}u_j|B_j(u)+1|.
 \label{eq:transport-local}
\end{equation}
Indeed, immediately before the one at \(u_b^*\), the balance is
\(2b-u_b^*\). Write \(s_w\) for the reference word. It belongs to
\(\mathcal L_w\) and has potential zero for either parity of \(w\).
The definition permits runs of three equal bits; at odd width the
reference word ends in three zeros.

The following row lemma supplies the masks and the connections between
successive branch sets. Its finite-state proof is given in
Appendix~\ref{app:transport}; the finite inequalities in that proof
are verified by exact integer computation.

\begin{lemma}
\label{lem:transport-row}
Let \(w\ge6\), \(x\in\mathcal L_w\), and \(q\in\F^w\).
There are binary words \(a,y,z\) of length \(w\), with
\(a_0=a_{w-1}=0\), \(z\in\mathcal L_w\), and exactly \(k\)
ones in \(y\), having the following properties. Put
\[
 \tau_j=a_j\mathbin\oplus a_{j+1}\quad(0\le j<w-1),\qquad
 \tau_{-1}=\tau_{w-1}=0,
 \qquad
 c(a,q)=2\sum_{j=0}^{w-2}\tau_j+\sum_{j=0}^{w-1}a_jq_j,
\]
where the sums are ordinary integer sums. Write
\(\xi_b,\eta_b,\zeta_b\) for the ordered one positions of
\(x,y,z\), respectively. Then
\begin{enumerate}
 \item \(\max(\xi_b,\eta_b)<\min(\xi_{b+1},\eta_{b+1})\)
       for \(0\le b<k-1\);
 \item each link \((\eta_b,\zeta_b)\) has displacement at most one;
       a same-column link has \(a_{\eta_b}q_{\eta_b}=1\), and a
       displacement-one link has
       \(\tau_{\min(\eta_b,\zeta_b)}=1\);
 \item \(\tau_{j-1}+\tau_j+a_jq_j\le2\) at every column, and
       \begin{equation}
        c(a,q)+D(z)\le2k+4+D(x).
        \label{eq:transport-row}
       \end{equation}
\end{enumerate}
The endpoints of \(y\) are unrestricted. A uniform deterministic
algorithm computes these words in \(O(w\log w)\) bit operations
and \(O(w\log w)\) bits of space.
\end{lemma}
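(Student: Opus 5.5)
The plan is to turn the row lemma into a finite-state optimisation over columns, scanned from left to right, and then bound its value uniformly in \(w\) by a potential (certificate) argument. The idea is to build all three words \(a,y,z\) column by column. The input at column \(j\) is the pair \((x_j,q_j)\), and the choices at column \(j\) are \((a_j,y_j,z_j)\). All constraints of the lemma are local once a bounded amount of memory is carried:
\begin{itemize}
\item the prefix balances \(B_j(x)\) and \(B_j(z)\), both confined to \([-3,3]\) by the definition of \(\mathcal L_w\);
\item the difference between the number of ones already placed in \(x\) and in \(y\), and between \(y\) and \(z\);
\item the previous mask bit \(a_{j-1}\) and the previous transition \(\tau_{j-1}\);
\item a flag recording whether a link \((\eta_b,\zeta_b)\) is still open, waiting for its partner one column later.
\end{itemize}
Condition~1 (interleaving of \(\xi\) and \(\eta\)) forces the count difference between \(x\) and \(y\) into \(\{-1,0,1\}\) with a restricted pattern. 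Condition~2 forces the count difference between \(y\) and \(z\) into \(\{-1,0,1\}\), with the displacement-one case requiring \(\tau=1\) at the left column of the link. Hence the state space \(\Sigma\) is a fixed finite set, independent of \(w\).

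By \eqref{eq:transport-local}, both \(D(x)\) and \(D(z)\) split into per-column contributions \(x_j|B_j(x)+1|\) and \(z_j|B_j(z)+1|\), which depend only on the state and the current bits. I would therefore assign to each transition the integer weight
\[
 2\tau_{j-1}+a_jq_j+z_j|B_j(z)+1|-x_j|B_j(x)+1|-1 .
\]
With this weight, inequality \eqref{eq:transport-row} becomes the claim that some admissible path through the column automaton has total weight at most \(2k+4-(w-1)\). Since \(2k=w-2\) or \(w-3\) according to parity, this is a constant bound of \(3\) or \(2\), and it holds uniformly in \(w\). The local degree condition \(\tau_{j-1}+\tau_j+a_jq_j\le2\) is imposed simply by deleting the offending transitions. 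The endpoint constraints \(a_0=a_{w-1}=0\), \(z_0=z_{w-1}=0\), and exactly \(k\) ones in \(y\) and \(z\), become initial and accepting state sets.

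The uniform bound would come from a finite certificate: a function \(\Phi:\Sigma\to\mathbb Z\) together with constants \(\alpha,\beta\) with three properties.
\begin{itemize}
\item From every reachable state \(s\) and every input letter \((x_j,q_j)\) compatible with \(x\in\mathcal L_w\), there is an admissible transition \(s\to s'\) whose weight is at most \(\Phi(s)-\Phi(s')\).
\item Every initial state has \(\Phi\le\alpha\).
\item From every reachable state there is an admissible closing sequence of bounded length into an accepting state with weight at most \(\beta+\Phi(s)\), where \(\alpha+\beta\) is at most the required constant.
\end{itemize}
Summing the step inequality along the greedy path telescopes, by induction on the number of columns, to the inequality \eqref{eq:transport-row} for every \(w\ge6\). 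Both parities are handled by separate closing sets, since the accepting conditions differ by one in the count of ones. I would first obtain \(\Phi\) by running value iteration (a min-plus Bellman recursion) on the automaton for small widths until the relative values stabilise. I would then check the one-step inequalities exhaustively in exact integer arithmetic. The case \(w=6,7\), where the closing sequence could overlap the opening, would be checked directly.

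The algorithmic statement follows from the same automaton. A forward dynamic program over \(w\) columns with \(O(1)\) states stores costs of size \(O(\log w)\) and back-pointers, which gives \(O(w\log w)\) time and space; alternatively, the greedy certified transitions can be followed directly.

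I expect the main obstacle to be the certificate itself: finding \(\Phi\) with the slack exactly \(4\), and no more. The \(-1\) per column gives amortised budget two per pair of columns. The cancelling cross-column pattern of Lemma~\ref{lem:support} uses exactly this budget, so adversarial \(q\) (forcing optional vertical edges) combined with drifting \(x\) leaves essentially zero slack. If a single-gap \(\Phi\) fails to exist, I would enlarge the state to remember one further column, or weaken the target to the joint two-gap inequality mentioned in the introduction.
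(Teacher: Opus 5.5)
\paragraph{The certificate demands an online router.}
Your column automaton, the per-column weights from \eqref{eq:transport-local} and the $O(w\log w)$ dynamic program are essentially those of the paper. The gap is the form of your certificate.

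You require that, from every reachable state and for every input letter, some transition has weight at most $\Phi(s)-\Phi(s')$. This is the same as requiring an \emph{online} router to stay within budget. Such a router fixes $(a_j,y_j,z_j)$ after reading only columns $0,\ldots,j$, against an adversary who reveals $x$ and $q$ column by column. A potential on a finite state set would bound the loss of such a router by a constant. The lemma only claims that a good route exists once the whole input is known, and here lookahead matters.

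\paragraph{A trap for online routing.}
Take $x=s_w$, and suppose column $2b+1$ is read with $a_{2b}=0$, nothing pending and $q_{2b+1}=0$.
\begin{itemize}
\item Every admissible link for label $b$ then costs at least one more than its allowance of two, except $(\eta_b,\zeta_b)=(2b+2,2b+1)$ with $a_{2b+1}=0$.
\item That link needs $\tau_{2b+1}=1$, so it fixes $a_{2b+2}=1$ before $q_{2b+2}$ is read.
\item If $q_{2b+2}=1$, this vertical entry can serve no label and costs one, and the degree bound forces $a_{2b+3}=1$.
\item If then $q_{2b+3}=0$, the cheapest link for label $b+1$ is $(2b+4,2b+3)$. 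It switches the mask off and returns you to the same situation at column $2b+5$.
\end{itemize}
The resulting $q$ has its ones exactly at the columns $\equiv2\pmod4$. Offline it is routed within budget: you pay once for an unused $\tau_0$, so that the masked pairs avoid those columns.

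A check of the alternatives indicates that an adaptive $q$ costs any online router about one unit per two labels. So you should not expect a $\Phi$ of your form to exist. Remembering more past columns cannot supply the missing lookahead.

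\paragraph{The missing idea.}
Certify the whole vector of minimum costs instead of a potential on single states; in effect, a finite cover of the weighted subset construction.

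In the paper, each interior letter acts by the monotone min-plus operator $T_{b,x,q}$ on $36$-coordinate vectors, with $+\infty$ for unreachable states. The certificate is a finite graph whose nodes carry a context $(b,\ell)$ and an explicit vector, with three properties:
\begin{itemize}
\item the root dominates the vector after the first column;
\item every edge satisfies \eqref{eq:transport-cover};
\item every node with $\ell=4$ and $b\in\{-1,-2\}$ has $\mathcal T_b(V)\le4$.
\end{itemize}
Induction over the interior letters then bounds the true minimum for every $w\ge6$ and both parities. The route itself is found only after the whole input is read, by the dynamic program with backtracking.

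The value iteration you describe already produces such vectors. The point is to certify a closed finite family of them, not to compress them into one function on states. This also settles your closing step: the last column is the actual final zero of $x$, applied by $\mathcal T_b$, not a closing sequence of your choosing.
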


Here \(x\) and \(y\) specify the incoming and outgoing columns of
the branch sets in one row, and \(z\) specifies the incoming columns
in the next row. Incoming and outgoing refer to the order of the rows
in the minor construction; the links between branch sets are undirected. The first condition lets us join the two columns
of each branch set by a horizontal interval, without intersecting the
other branch sets. The second condition uses precisely the links supplied
by Lemma~\ref{lem:support}. The cost \(c(a,q)\) counts all added directed dependencies, including
those not used by the grid-minor links.

We call \((a,y,z)\) a \emph{legal route} for \((x,q)\) if it
satisfies these endpoint, cardinality, interval, link and incidence
conditions, without requiring the cost inequality
\eqref{eq:transport-row}. This distinction will allow us to minimise
the combined cost of two consecutive routes.
Each mask transition supplies two directed diagonal dependencies,
although the ordered links can use at most one of them: using both
would exchange two consecutive labels. If \(K_v\) vertical links
are selected, \(U_v\) vertical dependencies are unused, and \(U_d\)
mask transitions supply no selected link, then
\[
 c(a,q)-2k=2U_d+U_v-K_v.
\]
Thus a vertical link costs one connection less than a selected
diagonal link, whereas unused support is still counted in full.
The potential records how the chosen columns affect the next gap.

\begin{proposition}
\label{prop:transport-lift}
Let \(h\ge2\), \(w\ge6\), \(0\le r<w\), and \(N=hw+r\).
For every binary path diagonal \(d\), there is a deterministic
construction on \(\Gamma_{h,w,r}\) similar to \(T(d)\), with
Moore locality, a null boundary, the retained bidirectional spanning
path, all three nonself degree bounds at most six, and an explicit
\(h\times k\) grid minor. It satisfies
\begin{equation}
 m(A)\le2N-2+(h-1)(2k+4).
 \label{eq:transport-shape}
\end{equation}
The construction uses Lemma~\ref{lem:transport-row} and, after the
path diagonal is supplied, takes \(O(N^2)\) bit operations and
\(O(N^2)\) bits of space.
\end{proposition}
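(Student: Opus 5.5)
We build the matrix gap by gap, iterating Lemma~\ref{lem:transport-row} down the complete rows, and then sum the cost inequality so that the transport potential telescopes.

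\textbf{Step 1: the iteration.} Start with \(x^{(0)}=s_w\), whose potential is zero. For each gap \(g=0,\ldots,h-2\) between rows \(g\) and \(g+1\), set the odd row \(o\) and the even row \(v\) to be the two rows of the gap. Take \(q_j=d_{o,j}\oplus d_{v,j}\). Apply Lemma~\ref{lem:transport-row} to \((x^{(g)},q)\) to obtain \((a^{(g)},y^{(g)},z^{(g)})\), and set \(x^{(g+1)}=z^{(g)}\). For the last row \(h-1\), choose any \(y^{(h-1)}\) with \(k\) ones satisfying the interval condition relative to \(x^{(h-1)}\); for example, \(y^{(h-1)}=x^{(h-1)}\).

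\textbf{Step 2: the matrix.} Define \(R\) in gap \(g\) by the mask \(a^{(g)}\), with every arrow pointing from the odd row to the even row. Since \(a_0=a_{w-1}=0\), the remark after Lemma~\ref{lem:support} applies, so Lemmas~\ref{lem:nil} and~\ref{lem:support} hold verbatim. They give similarity to \(B\), and hence to \(T(d)\), Moore locality with a null boundary, and the retained path. They also show that the entries added in gap \(g\) number exactly \(2\sum_j\tau_j+\sum_j a_jq_j=c(a^{(g)},q)\).

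\textbf{Step 3: the grid minor.} In row \(i\), take the branch set \(\mathcal B_{i,b}\) to be the horizontal interval between \(\xi_b\) and \(\eta_b\). Condition~1 makes these intervals disjoint and ordered, and each is connected by path edges. Horizontal adjacency between consecutive intervals comes from the path edge joining them. The vertical link \((\eta_b,\zeta_b)\) from row \(i\) to row \(i+1\) is present by condition~2 and Lemma~\ref{lem:support}:
\begin{itemize}
\item a same-column link needs \(a_jq_j=1\), which is exactly the optional vertical entry;
\item a diagonal link needs \(\tau_{\min}=1\), which is one of the entries in \eqref{eq:four}.
\end{itemize}

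\textbf{Step 4: degrees.} Condition~3 gives \(\tau_{j-1}+\tau_j+a_jq_j\le2\). Hence each cell gains at most two new neighbours from each adjacent gap, at most four in total. Together with at most two path neighbours, this gives underlying degree at most six, and the directed degrees are bounded by it. Cells in the partial row and cells whose turns lie in the outer columns must be checked separately. This is unproblematic because every mask vanishes at columns \(0\) and \(w-1\), so those cells only lose shear neighbours there.

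\textbf{Step 5: summation.} Summing \eqref{eq:transport-row} over the \(h-1\) gaps telescopes the potential:
\[
 \sum_g c(a^{(g)},q^{(g)})\le(h-1)(2k+4)+D(x^{(0)})-D(x^{(h-1)})\le(h-1)(2k+4),
\]
using \(D(s_w)=0\) and \(D\ge0\). Adding the \(2N-2\) path entries gives \eqref{eq:transport-shape}.

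\textbf{Step 6: complexity.} There are \(h-1=O(\sqrt N)\) calls to the lemma, each costing \(O(w\log w)\). Writing out the output is within \(O(N^2)\).

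The main obstacle is bookkeeping rather than a new idea. First, the lemma's \(q\) must encode exactly the optional-vertical coefficient \(d_{o,j}+d_{v,j}\) in the correct orientation, since \(x\) refers to the upper row of the minor while the shear always runs odd to even. Second, we must confirm that the degree bound survives at rows with two adjacent gaps whose masks are now unrelated. I would handle the second point by counting the two gaps' contributions separately, using the per-column bound of condition~3 for each.
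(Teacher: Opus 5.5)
Your plan is the paper's own proof. You iterate Lemma~\ref{lem:transport-row} from \(x^{(0)}=s_w\) and feed each \(z^{(g)}\) forward. You then put odd-to-even shears at the mask columns, apply the support calculation gap by gap, bound degrees gap by gap through condition~3, and telescope \eqref{eq:transport-row}. Steps~1, 2, 4, 5 and~6 are sound. One aside in Step~4 is inaccurate: outer-column cells do not only lose neighbours, since they acquire a cross-column neighbour when \(a_1=1\) or \(a_{w-2}=1\). Condition~3 holds at every column, however, so your per-gap count already covers them.

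The gap is in Step~3, where you say consecutive branch sets in a row are joined by ``the path edge joining them''. Condition~1 gives only strict separation, \(\max(\xi_b,\eta_b)<\min(\xi_{b+1},\eta_{b+1})\), not contiguity. Your intervals can therefore be separated by unused columns.

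For example, \(s_w\) has its ones at \(1,3,\ldots,2k-1\). If the first route has \(y^{(0)}=x^{(0)}\), which condition~1 permits, your branch sets in row~\(0\) are the singletons \(\{(0,1)\},\{(0,3)\},\ldots\). Every added entry joins two different rows, so the only edges of \(U(A)\) within a row are path edges. Nothing joins \((0,1)\) to \((0,3)\). The same problem arises in the last row with \(y^{(h-1)}=x^{(h-1)}\). So the horizontal grid adjacencies are not certified.

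The repair, which is what the paper does, is to enlarge the intervals in each row to a partition of the whole row. With \(l_b=\min(\xi_b,\eta_b)\), take \([0,l_1-1],[l_1,l_2-1],\ldots,[l_{k-1},w-1]\). Each part is a nonempty interval connected by path edges, and it contains the original \(b\)-th interval because \(\max(\xi_b,\eta_b)<l_{b+1}\). Hence the vertical link from \(\eta_b\) in row \(i\) to \(\zeta_b\) in row \(i+1\) still joins the \(b\)-th branch sets, and consecutive parts now share a horizontal path edge. With this change your argument gives the full \(h\times k\) minor.
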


\begin{proof}
Place the path as in Section~\ref{sec:shear}. Start with the word
\(x^{(0)}\) whose ones are at \(1,3,\ldots,2k-1\).
For gap \(i\), put
\(q^{(i)}_j=d_{i,j}\mathbin\oplus d_{i+1,j}\), apply
Lemma~\ref{lem:transport-row} to \((x^{(i)},q^{(i)})\), and write
the result as \((a^{(i)},y^{(i)},z^{(i)})\). Set
\(x^{(i+1)}=z^{(i)}\). All these words are read in increasing
physical column order, even in rows traversed from right to left by
the spanning path. Thus the output word in one gap is the identical
input word in the next, with the same labels and potential.

Put a shear at each selected column of each complete-row gap, directed
from its odd row to its even row. The shears need not form a matching.
Nevertheless, row parity and the zero outer mask bits give
\(R^2=RHR=RDR=RV=VR=0\), exactly as in Lemma~\ref{lem:nil}.
Consequently \(A=(I+R)B(I+R)\) is similar to \(B\).
The support calculation in Lemma~\ref{lem:support} applies separately
in each gap. It preserves the path and the diagonal, and gives only
Moore-local additions. No shear meets the partial row.

Each gap contributes at most two new neighbours to a cell, by the
third condition of Lemma~\ref{lem:transport-row}. A cell meets at
most two complete-row gaps and has at most two path neighbours.
Its undirected degree, and hence each directed degree, is at most six.
Since the added entries are distinct and do not overlap the path entries,
the total number of directed nonself dependencies is
\begin{equation}
 m(A)=2N-2+\sum_{i=0}^{h-2}c(a^{(i)},q^{(i)}).
 \label{eq:route-count}
\end{equation}
Summing \eqref{eq:transport-row} cancels the intermediate potentials.
The initial potential is zero and the final potential is nonnegative,
so
\[
 \sum_{i=0}^{h-2}c(a^{(i)},q^{(i)})
 \le(h-1)(2k+4),
\]
which proves \eqref{eq:transport-shape}.

For the minor, join the \(b\)-th one of \(x^{(i)}\) to the
\(b\)-th one of \(y^{(i)}\) by their horizontal interval in row
\(i<h-1\). In the last row use the singleton at the \(b\)-th
one of \(x^{(h-1)}\). These intervals are disjoint and ordered.
If their left endpoints in row \(i\) are
\(l_{i,0}<\cdots<l_{i,k-1}\), extend them to the partition
\[
 [0,l_{i,1}-1],\ [l_{i,1},l_{i,2}-1],\ \ldots,\
 [l_{i,k-1},w-1].
\]
Each part is nonempty and contains its original interval. Horizontal
path edges connect the cells within each part and also join consecutive
parts. The link from the \(b\)-th one of \(y^{(i)}\) to the
\(b\)-th one of \(z^{(i)}=x^{(i+1)}\) supplies the vertical grid
edge with that label. Its coefficient is one by
Lemma~\ref{lem:support}. These parts and links therefore give the
full \(h\times k\) minor, including when an outgoing column is an
outer column. The partial row lies outside the branch sets.

The row computations cost \(O(hw\log w)=O(N\log N)\) bit
operations. Each row of \(I+R\) has at most three ones, and each
row of \(B\) has at most three. Using length-\(N\) bit rows,
the two products defining \(A\) therefore use \(O(N^2)\) bit
operations and \(O(N^2)\) bits of space. Listing the support, rules and minor witnesses
fits within the same bounds.
\end{proof}

The geometric part of this proof applies to any sequence of legal
routes whose output words are the next input words. Only the estimate
of the sum in \eqref{eq:route-count} used the single-gap cost bound.
We now improve that estimate at even width.

\begin{lemma}
\label{lem:pair6}
Let \(w\ge6\) be even, \(x\in\mathcal L_w\), and
\(q^0,q^1\in\F^w\). There are legal routes
\((a^0,y^0,u)\) for \((x,q^0)\) and \((a^1,y^1,z)\) for
\((u,q^1)\), with the same intermediate word \(u\), such that
\begin{equation}
 c(a^0,q^0)+c(a^1,q^1)-4k+D(z)-D(x)\le6.
 \label{eq:pair6}
\end{equation}
A deterministic algorithm chooses both routes from the complete input
in \(O(w\log w)\) bit time and space.
\end{lemma}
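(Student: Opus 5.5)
Lemma~\ref{lem:pair6} says that two consecutive gaps at even width can be routed jointly with total excess at most \(6\). Applying Lemma~\ref{lem:transport-row} twice only gives \(8\), so the joint choice has to save \(2\). I would prove it the same way the paper proves the single-gap lemma: a finite-state dynamic program over columns, followed by a finite certificate and an induction on \(w\).

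\emph{State space.} Process the columns \(j=0,1,\ldots,w-1\) from left to right. At each column, one transition guesses the bits \(a^0_j,y^0_j,u_j,a^1_j,y^1_j,z_j\) given \(x_j,q^0_j,q^1_j\). The state carries only bounded data:
\begin{itemize}
\item the previous mask bits \(a^0_{j-1},a^1_{j-1}\), which determine \(\tau\);
\item the prefix balances of \(x\), \(u\) and \(z\), which lie in \([-3,3]\) by membership in \(\mathcal L_w\);
\item the bounded offsets between the label counts of \(x\) and \(y^0\), of \(y^0\) and \(u\), of \(u\) and \(y^1\), and of \(y^1\) and \(z\); the interval condition and displacement-at-most-one links keep these within a constant;
\item a flag for a pending diagonal link, and the incidence budget \(\tau_{j-1}+\tau_j+a_jq_j\le2\) for each gap.
\end{itemize}
The per-column weight is the cost increment of both gaps,
\[
2\tau^0_{j-1}+a^0_jq^0_j+2\tau^1_{j-1}+a^1_jq^1_j,
\]
plus the local potential terms \(z_j|B_j(z)+1|-x_j|B_j(x)+1|\) from \eqref{eq:transport-local}, minus the per-label allowance \(4\) charged each time a label is completed. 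With this accounting, the left side of \eqref{eq:pair6} is the total path weight. The intermediate potential \(D(u)\) never appears, which is exactly why the two routes may be chosen jointly.

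\emph{Reducing to a finite certificate.} Because the potential terms \(|B_j+1|\) are bounded, the weighted automaton has finitely many states and bounded weights. I would use a min-plus matrix or tropical argument. Let \(M\) be the transfer for a two-column block, since even width advances by pairs. I would exhibit a potential function \(\phi\) on states satisfying \(\min_{\text{inputs}}\) transfer \(+\phi(\text{next})\le\phi(\text{current})\) for all inputs, where the minimum is over routing choices, checked column-pair by column-pair. The steps are then:
\begin{enumerate}
\item Verify by exact integer computation that this invariant holds on every block of two columns for every input pair \(q^0_{j},q^1_j\) and every \(x\)-bit pattern.
\item Separately verify a boundary certificate for the left end (starting states with balance \(0\) and zero mask bits).
\item Verify the right end (zero last bits, balance \(0\) for \(w\) even, all labels closed), showing that start plus end contribute at most \(6\).
\item Conclude by induction on \(w/2\) that the optimum is at most \(6\) for every even \(w\ge6\), including the base case \(w=6\) directly.
\end{enumerate}
The algorithm is the Viterbi pass over this constant-size automaton, with \(O(w)\) steps on \(O(\log w)\)-bit counters and back-pointers. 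This gives \(O(w\log w)\) time and space.

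\emph{Main obstacle.} The hard part is finding the invariant \(\phi\) and confirming the closure check. The claimed saving of \(2\) must come from trading a costly diagonal link in one gap for a vertical link where \(q=1\) in the other. That trade requires enough slack in the balance bounds \(\pm3\). If a naive \(\phi\) fails, my first step would be to compute the exact minimal values by value iteration on the block automaton until it stabilises, and take \(\phi\) as the stabilised relative values. If the stabilised growth rate per pair exceeds zero, I would fall back to enlarging the state with one more column of lookahead.
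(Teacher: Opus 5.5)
The DP and weight accounting match the paper's, but there is a gap in the certificate: the scalar potential \(\phi\) certifies online routing, which is stronger than the lemma and not known to hold.

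\textbf{What matches.} You use two synchronised copies of the row program that share the middle word \(u\) and its balance. The weight is both gaps' costs, minus \(4\) per label, plus only the \(x\)- and \(z\)-potential terms, so \(D(u)\) never appears. The Viterbi pass with backtracking is also the paper's algorithm.

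\textbf{Where it breaks.} Your \(\phi\) on automaton states satisfies \(\min_{\text{choices}}[\text{weight}+\phi(\text{next})]\le\phi(\text{current})\) for every letter, or every two-column block. Such a \(\phi\) certifies a \emph{causal} router. That router fixes \(a^0_j,y^0_j,u_j,a^1_j,y^1_j,z_j\) after reading only the input so far. It must also sit in a closable state whenever the prefix balance of \(x\) is \(-1\), because the column about to be read may be the final zero.

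If such a \(\phi\) existed your argument would be sound, but it proves more than the lemma. The bound \eqref{eq:pair6} concerns the offline minimum over legal routes for a fixed complete input; the quantifiers are \(\forall\) input \(\exists\) routes. Your \(\phi\) instead bounds the alternating value \(\max_{a_1}\min_{s_1}\max_{a_2}\min_{s_2}\cdots\). This is at least \(\max_{a_1,a_2,\ldots}\min_{s_1,s_2,\ldots}\) and may be strictly larger.

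So nothing guarantees a \(\phi\) with constant \(6\), and your fallback does not address this. Value iteration of a max-over-letters, min-over-choices recursion computes that online game value, not the quantity in the lemma. A fixed extra column of lookahead is not known to close the difference. The paper states explicitly that its certificate asserts no online choice of routes.

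\textbf{The missing idea.} Certify the forward min-plus cost \emph{vector}, not a scalar potential. After each prefix, let \(V\) hold, for each of the \(1296\) joint states, the least accumulated weight of a partial route ending there. Each letter \((x_j,q^0_j,q^1_j)\) acts on \(V\) by a monotone min-plus operator \(P\).

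The paper exhibits a finite graph whose nodes carry a context \((b,\ell)\), with \(\ell\) capped at four, and a literal extended-integer vector. There is one edge per allowed letter, and the checks are:
\begin{enumerate}
\item the root dominates the exact post-first-column vector;
\item \(V_{v'}\ge P(V_v)\) componentwise along every edge, with \(P\) the operator of that edge's letter;
\item appending the last zero at every node of context \((-1,4)\) gives a finite cost at most \(6\).
\end{enumerate}
Monotonicity then carries the bound along every input of every even width. Only afterwards does the dynamic program choose both routes, with the whole input known.

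\textbf{A small slip.} The input balance just before the last column is \(-1\) (and \(-2\) after it), not \(0\), since \(x\) has only \(k=(w-2)/2\) ones.
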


Applying Lemma~\ref{lem:transport-row} twice would give eight in
place of six. Lemma~\ref{lem:pair6}, called PAIR6 in the
supplement, chooses the intermediate word together with both routes.
Neither \(c(a^0,q^0)-2k+D(u)-D(x)\) nor
\(c(a^1,q^1)-2k+D(z)-D(u)\) is required to be at most four.
The first route may use more when the second uses less, so their
columns are chosen together. The common word is retained exactly when the two routes are
assembled into the minor. The proof in Appendix~\ref{app:pair6}
synchronises two copies of the row dynamic program and verifies a
finite upper bound on their combined cost.

One more improvement is available at the beginning of the construction.
For even width, the standard word has the fixed form
\(s_w=0(10)^k0\).

\begin{lemma}
\label{lem:first3}
Let \(w\ge6\) be even and \(q\in\F^w\). There is a legal route
\((a,y,z)\) for \((s_w,q)\) satisfying
\begin{equation}
 c(a,q)-2k+D(z)\le3.
 \label{eq:first3}
\end{equation}
Such a route can be computed in \(O(w\log w)\) bit time and space.
\end{lemma}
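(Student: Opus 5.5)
We prove Lemma~\ref{lem:first3} by running the same finite-state dynamic program that underlies Lemma~\ref{lem:transport-row}, specialised to the fixed input \(x=s_w=0(10)^k0\) with \(D(s_w)=0\). The inequality \eqref{eq:transport-row} alone only gives \(c(a,q)-2k+D(z)\le4\), so we must gain one unit at this input. The plan has three steps.

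First, fix the state space. Scan columns \(j=0,1,\ldots,w-1\) from left to right. A state records the current mask bit \(a_j\), the prefix balances \(B_j(y)\) and \(B_j(z)\), and the local pattern of the incoming word. Because the input is periodic, the incoming pattern reduces to the parity of \(j\). Since \(z\in\mathcal L_w\), the balance of \(z\) lies in \([-3,3]\). The interval condition forces \(\eta_b\) to lie between \(\xi_{b-1}\) and \(\xi_{b+1}\), so \(y\) is confined to a bounded window around \(2b+1\). Hence the state set is finite and independent of \(w\). The column cost is \(2\tau_{j-1}+a_jq_j\) plus the local potential increment \(z_j|B_j(z)+1|\) from \eqref{eq:transport-local}, subtracted against the reference charge of one per branch. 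The legality constraints of links, incidence and \(\tau_{j-1}+\tau_j+a_jq_j\le2\) are local transitions on this automaton.

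Second, for each input word \(q\), the minimum of \(c(a,q)-2k+D(z)\) is a min-plus product of transfer matrices \(M_{q_j}\), one for each column. The claim is that this product is at most \(3\) for every \(q\in\F^w\), uniformly in even \(w\ge6\). Since the input is periodic with period two, we group columns in pairs, giving one min-plus matrix \(P_{q_{2b+1}q_{2b+2}}\) for each two-bit block. We then seek a certificate: a potential vector \(\phi\) on states, finite on reachable states, satisfying \(\min(P_\sigma\phi)\le\phi\) componentwise for all four blocks \(\sigma\). Such a vector shows that the worst-case cost does not grow with \(k\). Separate boundary vectors handle the prefix \(0\) and suffix \(0\) of \(s_w\), together with the zero outer mask bits. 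We verify the certificate inequalities, together with the base cases \(w=6\) and \(w=8\), by exact integer computation. The induction on \(k\) then follows the pattern of the appendix proof of Lemma~\ref{lem:transport-row}: appending a block preserves the bound \(\text{start}+\phi\le 3\).

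Third, the algorithm runs the dynamic program with back-pointers over \(O(w)\) columns and a constant-size state set. The costs are integers of \(O(\log w)\) bits, which gives \(O(w\log w)\) time and space. Tracing back the optimal path yields \(a\), \(y\) and \(z\).

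The main obstacle is the second step: finding a min-plus potential that certifies the constant \(3\) rather than \(4\). Near the right boundary, when \(w\) is even, \(z\) is forced back to balance zero with a zero final bit, and this may cost extra. To address this, we first compute the exact worst case over all \(q\) for \(w\) up to about \(20\) and check that it stabilises at \(3\). We then extract \(\phi\) as the stabilised value function of the suffix dynamic program, which by construction satisfies the fixed-point inequality and absorbs the right-boundary cost.
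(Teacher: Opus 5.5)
Your framework matches the paper's: the row dynamic program restricted to \(x=s_w\), interior columns grouped into \(10\) blocks, an exactly checked finite certificate, induction on the number of blocks, and backtracking for the output. The gap is that the certificate you propose is of the wrong kind, and your recipe for producing it is unsound.

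\emph{What your certificate certifies.} A single vector \(\phi\) with \(\min_{s'}(P_\sigma(s,s')+\phi(s'))\le\phi(s)\) for all states \(s\) and all four blocks \(\sigma\), together with \(\min_s(V^0_s+\phi(s))\le3\), certifies an \emph{online} strategy. One post-first-column state must be fixed in advance. At each block, a successor chosen from the current state and current block alone must then keep the cost at most three against every continuation.

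\emph{What the lemma needs.} Only the offline minimum, with the route chosen after all of \(q\) is known. Since \(\sup_q\min_s\) can be strictly smaller than \(\min_s\sup_q\), nothing guarantees that such a \(\phi\) exists with constant \(3\). The paper's certificates deliberately bound minimum-cost vectors and do not assert an online choice.

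\emph{Why your extraction recipe fails.} Let \(g_{q'}(s)\) be the minimum accepting cost from \(s\) on a suffix \(q'\). The exact worst-case suffix value \(\Phi_n(s)=\max_{|q'|=n}g_{q'}(s)\) satisfies
\[
 \Phi_{n+1}(s)\le\max_\sigma\min_{s'}\bigl(P_\sigma(s,s')+\Phi_n(s')\bigr),
\]
which is the wrong direction. So a stabilised \(\Phi\) need not satisfy your fixed-point inequality. If you instead iterate the max--min recursion, any stabilised limit does satisfy the inequality. But it is the value of the online game, which may exceed the offline worst case and need not stabilise at all. Your numerical check up to \(w\approx20\) concerns only the offline quantity, so it supports neither version.

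Two smaller points. The phrase ``appending a block preserves start\(+\phi\le3\)'' mixes directions, since a suffix potential is propagated by prepending blocks. Also, the reference charge must be two per branch, because the target is \(c(a,q)-2k\).

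The missing idea is a forward closed cover with several nodes. Work with the forward cost vectors of the row model, starting from the exact vector \(V^0\) after the first zero column. In the paper's \(36\)-state model, \(V^0\) has finite entries \(2,0,2\) at states \(9,16,17\). Use the block operators \(U_{q_1,q_0}=T_{0,0,q_0}\circ T_{-1,1,q_1}\). A certificate is a finite graph with the following data:
\begin{itemize}
\item nodes carrying upper-bound vectors \(V\) and a block count capped at two;
\item a root dominating \(V^0\);
\item exactly one edge per label \((q_1,q_0)\), whose target dominates \(U_{q_1,q_0}(V)\);
\item the terminal check \(\mathcal T_{-1}(V)\le3\) at every count-two node.
\end{itemize}
Monotonicity then bounds the true forward vector along the path determined by \(q\), for every \(k\ge2\), and the terminal check gives the offline bound directly.

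Several nodes are needed because the forward vector depends on the history of \(q\). The componentwise maximum of the reachable vectors need not be closed, since \(T(\sup_i v_i)\ge\sup_i T(v_i)\), and its terminal value may exceed three. The paper's cover has \(43\) nodes, \(172\) edges and \(38\) terminal checks. Given this bound, your third step is exactly the paper's algorithm.
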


This standard-start inequality, called FIRST3 in the supplement,
restricts only the input interface. Its output remains in the full
class \(\mathcal L_w\), so it can be followed by
Lemma~\ref{lem:pair6}. In Appendix~\ref{app:first3}, we group the
interior columns into their fixed \(10\) blocks and prove the
inequality by a finite certificate and induction on the number of
blocks. The original row optimiser then attains the improved bound.

\begin{proof}[Proof of Theorem~\ref{thm:main}]
Construct the path diagonal by Proposition~\ref{prop:seed}, place
it on \(\Gamma_N\), and set
\(q^{(i)}_j=d_{i,j}\mathbin\oplus d_{i+1,j}\) for every full-row
gap. Start with \(x^{(0)}=s_w\), and always pass the output interface
of a route to the next route. The assembly in
Proposition~\ref{prop:transport-lift} preserves similarity, locality,
the path, all three degree bounds and the full minor for every legal
route sequence. It does not depend on how the gaps are grouped.
In particular, all routing words are ordered by increasing physical
column, and the direction of each shear is determined by the parity
of its actual rows.

Put \(t=h-1\), write \(c_i=c(a^{(i)},q^{(i)})\), and define
\[
 \delta_i=c_i-2k+D(x^{(i+1)})-D(x^{(i)}).
\]
Since \(D(x^{(0)})=0\), the exact count becomes
\begin{equation}
 m(A)=2N-2+2kt+\sum_{i=0}^{t-1}\delta_i-D(x^{(t)}).
 \label{eq:strict-telescope}
\end{equation}
The canonical parameters satisfy \(w\le h\le w+2\). If
\(h=w+2\), the inequality \(N<(w+1)^2\) forces \(r=0\).
We use the following three cases.

If \(w\) is odd, apply Lemma~\ref{lem:transport-row} at every
gap. Since \(2k=w-3\) and \(\delta_i\le4\),
\[
 m(A)\le2N-2+(h-1)(w+1)
       =3N+h-w-r-3\le3N-r-1.
\]

If \(w\) is even and \(h\le w+1\), apply
Lemma~\ref{lem:pair6} to gaps \((0,1),(2,3),\ldots\), followed
by Lemma~\ref{lem:transport-row} if a final gap remains. Each pair
starts at the output word of the preceding pair. Then
\[
 \sum_i\delta_i\le6\lfloor t/2\rfloor+4(t\bmod2)
                  =3t+(t\bmod2).
\]
Using \(2k=w-2\) in \eqref{eq:strict-telescope} gives
\begin{equation}
 m(A)\le3N+h-w-r-3+((h-1)\bmod2)
       =3N-r-2.
 \label{eq:even-budget}
\end{equation}
The last equality holds for both \(h=w\) and \(h=w+1\).

It remains to consider even \(w\) with \(h=w+2\) and \(r=0\),
so \(N=w(w+2)\). There are \(w+1\) gaps. Apply
Lemma~\ref{lem:first3} to gap zero, then
Lemma~\ref{lem:pair6} to
\[
 (1,2),(3,4),\ldots,(w-1,w).
\]
The first route therefore has \(\delta_0\le3\), and each following
pair has combined \(\delta\) at most six. Consequently
\begin{align*}
 m(A)&\le2N-2+(w-2)(w+1)+3+3w-D(x^{(w+1)})\\
     &=3N-1-D(x^{(w+1)})\le3N-1.
\end{align*}
The saving can also be written as an exact nonnegative remainder:
\begin{equation}
 \begin{split}
 H'&=D(x^{(w+1)})+(3-\delta_0)
      +\sum_{b=0}^{w/2-1}(6-\delta_{2b+1}-\delta_{2b+2})\ge0,\\
 m(A)&=3N-1-H'.
 \end{split}
 \label{eq:strict-remainder}
\end{equation}
Placing the unpaired gap first is what makes the standard-start
estimate available. All subsequent estimates use the interfaces
actually returned by the preceding routes.

The geometric estimates in the proof of Theorem~\ref{thm:paired}
give aspect ratio at most \(4/3\) and the stated square minor.
The routing takes \(O(hw\log w)\) bit operations; the simultaneous
similarity and output take \(O(N^2)\) bit time and space as before.
Adding the seed synthesis gives \(O(N^3)\) bit time and
\(O(N^2)\) bit space. The construction and these bounds apply to
every supplied irreducible polynomial of degree \(N\).
\end{proof}

\subsection{A computer-assisted refinement for finite widths}

For widths six through eleven, three specialised finite tables improve
on the row-cost bound in Proposition~\ref{prop:transport-lift}. They use the
same support formulas, choosing a mask from the diagonal differences
and the incoming branch-set columns. The resulting total upper bound
is smaller than that in Proposition~\ref{prop:transport-lift} by
\(2(h-1)\). We state the properties verified for
these tables, then derive the construction for every height.

Put \(W=2\lfloor w/2\rfloor\) and \(k=(W-2)/2\).
A \emph{routing state} is an increasing tuple
\(x=(x_0,\ldots,x_{k-1})\) of columns in \(\{0,\ldots,W-1\}\).
Its entries are the designated incoming columns for the branch sets in
one complete row. A table row selects outgoing columns in that row and
incoming columns in the next row. Here incoming and outgoing refer to
the order of the rows in the grid-minor construction; the links used
for this routing are undirected. The designated initial states and the
sizes of the state classes
\(\mathcal S_W\) are given in Table~\ref{tab:policies}.
For \(W=6\), the class contains all two-subsets; for \(W=8\),
it contains all three-subsets except \((0,1,2)\) and \((5,6,7)\).
For \(W=10\), it is the explicit list of 128 four-subsets specified
by the \texttt{class\_indices} field of \texttt{TABLES.json}, using
lexicographic indices starting at zero. This file is part of the
computational certificate described in Appendix~\ref{app:verification}.

Use potential zero for \(W=6,10\). For \(W=8\), put
\[
 \Phi_8(x)=\min\left\{3,
  \sum_{b=0}^{2}\operatorname{dist}(x_b,[3b,3b+1])
  +\one_{\{x\in\{(2,3,5),(2,4,5)\}\}}\right\},
\]
where \(\operatorname{dist}\) is ordinary distance on the real line.
Write these potentials uniformly as \(\Phi_W\).
They are nonnegative and vanish at the designated initial states.

For a mask \(a\in\F^W\) with \(a_0=a_{W-1}=0\) and an
internal difference word \(q\in\F^{W-2}\), extend \(q\) by zero
at both ends and define, as ordinary integers,
\[
 \tau_j=a_j\mathbin\oplus a_{j+1}\quad(0\le j<W-1),\qquad
 v_j=\tau_{j-1}+\tau_j+a_jq_j,
 \qquad
 c(a,q)=2\sum_{j=0}^{W-2}\tau_j+\sum_{j=0}^{W-1}a_jq_j,
\]
with \(\tau_{-1}=\tau_{W-1}=0\). Here \(v_j\) counts the new
neighbours contributed by this gap to a cell in column \(j\), and
\(c(a,q)\) counts the directed dependencies added across the gap.
An available link \((s,t)\) joins column \(s\) in the current row to
column \(t\) in the next row. The available links are \((j,j+1)\)
and \((j+1,j)\) when \(\tau_j=1\), and \((j,j)\) when
\(a_jq_j=1\). These are the undirected links arising from the
coefficient formulas in Lemma~\ref{lem:support}.

\begin{table}[htbp]
\centering
\begin{tabular}{@{}rrrrrc@{}}
\toprule
\(W\) & \(k\) & States & Words per state & Table rows & Initial state\\
\midrule
6 & 2 & 15 & 16 & 240 & \((0,3)\)\\
8 & 3 & 54 & 64 & 3456 & \((0,3,6)\)\\
10 & 4 & 128 & 256 & 32768 & \((0,3,6,9)\)\\
\bottomrule
\end{tabular}
\caption{The complete finite policies used for the widths
\(6\le w\le11\). Each table contains one row for every pair of
an admissible state and an internal difference word.}
\label{tab:policies}
\end{table}

\begin{lemma}
\label{lem:policies}
For every \(W\in\{6,8,10\}\), \(x\in\mathcal S_W\) and
internal word \(q\), the corresponding table row specifies a mask
\(a\), an increasing \(k\)-tuple \(y\) of columns in
\(\{0,\ldots,W-1\}\), and a successor
\(z\in\mathcal S_W\) such that
\[
 \max(x_b,y_b)<\min(x_{b+1},y_{b+1})\quad(0\le b<k-1),
 \qquad v_j\le2\quad(0\le j<W),
\]
every \((y_b,z_b)\) is an available link, and
\begin{equation}
 c(a,q)+\Phi_W(z)\le W+\Phi_W(x).
 \label{eq:policy}
\end{equation}
\end{lemma}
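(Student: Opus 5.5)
This is a finite statement, and I would prove it by exhaustive exact verification of the three tables rather than by analysis. For each \(W\in\{6,8,10\}\), I enumerate the pairs \((x,q)\) with \(x\in\mathcal S_W\) and \(q\in\F^{W-2}\). That gives \(15\cdot16=240\), \(54\cdot64=3456\) and \(128\cdot256=32768\) pairs, matching Table~\ref{tab:policies}. I then read the corresponding row of \texttt{TABLES.json}.

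The verifier first checks the table itself. The row must exist and be unique for each pair. Its mask must have \(a_0=a_{W-1}=0\). The tuple \(y\) must be strictly increasing with \(k\) entries. The successor \(z\) must lie in \(\mathcal S_W\), checked via \texttt{class\_indices} for \(W=10\) and via the explicit descriptions for \(W=6,8\).

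Next, for each row, I compute \(\tau_j\), \(v_j\) and \(c(a,q)\) in integer arithmetic, using the zero-extended \(q\). I then check four conditions:
\begin{enumerate}
\item the interleaving condition \(\max(x_b,y_b)<\min(x_{b+1},y_{b+1})\);
\item the bound \(v_j\le2\) at every column;
\item that each \((y_b,z_b)\) is available, meaning either \(y_b=z_b\) with \(a_{y_b}q_{y_b}=1\), or \(|y_b-z_b|=1\) with \(\tau_{\min(y_b,z_b)}=1\);
\item inequality \eqref{eq:policy}, with \(\Phi_6=\Phi_{10}=0\) and \(\Phi_8\) evaluated from its closed form.
\end{enumerate}

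I would also record that \(\Phi_W\ge0\) and that \(\Phi_W\) vanishes at the initial states. For \((0,3,6)\) this holds because each \(x_b\) lies in \([3b,3b+1]\) and the exceptional indicator is off. This fact is used afterwards in the telescoping argument, which is not part of this lemma.

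The main obstacle is making this verification trustworthy rather than logically subtle. The checker must be independent of the search that produced the tables. It must recompute every quantity from the definitions, and it must confirm closure: every successor state lies back in \(\mathcal S_W\), so the potential inequality can be chained over arbitrary heights. For \(W=8\), I would tabulate \(\Phi_8\) over all 54 states once and use that table, so that the cap at \(3\) and the indicator term cannot be mishandled.

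If I had to produce the tables rather than only check them, I would generate them by a value-iteration or minimum-mean-cycle computation on the state graph. Edges would be weighted by \(c(a,q)-W\), and \(\Phi_W\) would be taken as a feasible potential. Exhaustive verification of the stored rows remains the proof.
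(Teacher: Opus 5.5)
Your proposal is correct and is essentially the paper's proof: an exhaustive exact-integer check of all 36,464 rows of \texttt{TABLES.json}. Like the paper, you confirm that the table's domain is exactly \(\mathcal S_W\times\F^{W-2}\) with every successor in \(\mathcal S_W\), and you recompute the mask, available links, incidences \(v_j\) and cost \(c(a,q)\) from the definitions to verify the ordering, incidence, link and potential conditions.
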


\begin{proof}[Computer-assisted verification]
The supplied verifier checks all 36,464 table rows using exact integer
arithmetic. It checks that each table has exactly one entry for every pair of an
admissible routing state and an internal difference word, with no other
entries, and that
each successor lies in the stated class. For each row it reconstructs
the mask, available links, incidences and cost, and verifies the displayed
ordering, incidence and potential inequalities. The file hash, command
and recorded results appear in Appendix~\ref{app:verification}.
This exhaustive verification proves the finite assertion.
\end{proof}

\begin{theorem}
\label{thm:finitewidth}
Let \(h\ge2\), \(6\le w\le11\), \(0\le r<w\), and
\(N=hw+r\). For every binary word \(d\in\F^N\), a deterministic
algorithm using the three fixed tables constructs a matrix \(A\) on
\(\Gamma_{h,w,r}\) similar to \(T(d)\). It is Moore-local with a
null boundary, retains the bidirectional spanning path, and has directed
and underlying maximum degrees at most six. Its underlying graph has
an explicit \(h\times\lfloor(w-2)/2\rfloor\) grid minor, and
\begin{equation}
 m(A)\le2N-2+W(h-1)
 =3N-\bigl[h(w-W)+r+W+2\bigr]<3N.
 \label{eq:finitewidth}
\end{equation}
Combining this construction with Proposition~\ref{prop:seed} uses
\(O(N^3)\) bit operations and \(O(N^2)\) bits of space for a
supplied irreducible polynomial. The proof depends on the finite
verification in Lemma~\ref{lem:policies}.
\end{theorem}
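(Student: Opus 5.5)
The plan follows the proof of Proposition~\ref{prop:transport-lift}, replacing the single-gap bound of Lemma~\ref{lem:transport-row} by the table inequality \eqref{eq:policy}.

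\emph{Placement and routing.} We place \(d\) in serpentine order on \(\Gamma_{h,w,r}\), giving \(B=H+V+D\). When \(w\) is odd (\(w=W+1\)), the routing uses only columns \(0,\ldots,W-1\), and column \(w-1=W\) carries no shear. The masks then still vanish at column \(w-1\), so the shears avoid the path turns there. One point needs care: the tables allow masks with \(a_0=1\)? No. Since \(a_0=a_{W-1}=0\), column \(0\) is never sheared. At even width, column \(W-1=w-1\) is also unsheared. At odd width, column \(W-1=w-2\) is internal and unsheared, so nothing changes. Hence \(R\) is supported on internal columns and complete rows, and Lemma~\ref{lem:nil} applies verbatim, giving \(R^2=RHR=RDR=RV=VR=0\). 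Consequently \(A=(I+R)B(I+R)\) is similar to \(B\), and hence to \(T(d)\).

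For each complete-row gap \(i\), set \(q^{(i)}_j=d_{i,j}\oplus d_{i+1,j}\) for the internal columns \(1\le j\le W-2\). Start from the designated initial state \(x^{(0)}\), and let the table row for \((x^{(i)},q^{(i)})\) supply \((a^{(i)},y^{(i)},z^{(i)})\), with \(x^{(i+1)}=z^{(i)}\in\mathcal S_W\). The shears are directed from odd rows to even rows.

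\emph{Support, degrees and count.} Lemma~\ref{lem:support} applies gap by gap, and its coefficient formulas yield exactly the available links and the cost \(c(a,q)\). In particular, the bound \(v_j\le2\) gives at most two new neighbours per gap. Each cell meets at most two gaps and has at most two path neighbours, so all three degree bounds are at most six. The count is
\[
 m(A)=2N-2+\sum_i c(a^{(i)},q^{(i)}).
\]
Summing \eqref{eq:policy} telescopes the potential terms. Since \(\Phi_W(x^{(0)})=0\) and \(\Phi_W\ge0\), we obtain \(\sum_i c\le W(h-1)\), which is \eqref{eq:finitewidth}. The stated identity is arithmetic with \(N=hw+r\), and strictness follows because the bracket is positive.

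\emph{Grid minor.} The minor is assembled as in Proposition~\ref{prop:transport-lift}. The ordering condition \(\max(x_b,y_b)<\min(x_{b+1},y_{b+1})\) lets us use disjoint ordered intervals joining \(x_b\) to \(y_b\) in each row, with singletons in the last row. We extend these to a partition of the row by left endpoints, and each link \((y_b,z_b)\) gives the vertical grid edge. The number of branch sets per row is \(k=(W-2)/2=\lfloor(w-2)/2\rfloor\).

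\emph{Main obstacle and complexity.} The main obstacle is the boundary bookkeeping at odd width and in the partial row. The unused column \(w-1\) must stay outside the support of \(R\), and the extended partitions must reach it; the last part \([l_{k-1},w-1]\) does so. No link may touch the partial row. Once these are checked, everything follows from Lemma~\ref{lem:policies}. The complexity claim comes from Proposition~\ref{prop:seed} together with constant-time table lookups and the \(O(N^2)\) products.
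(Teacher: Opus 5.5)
Your proposal is correct and follows essentially the same route as the paper's proof: you drive the gaps by the table rows, obtain similarity from row parity and internal support of \(R\), apply Lemma~\ref{lem:support} gap by gap with \(v_j\le2\), telescope \eqref{eq:policy} using \(\Phi_W(x^{(0)})=0\) and \(\Phi_W\ge0\), and extend the ordered intervals \([\min(x_b,y_b),\max(x_b,y_b)]\) to a row partition whose last part absorbs the appended column at odd width. The only difference is cosmetic: you bound the complexity via the \(O(N^2)\) products of Proposition~\ref{prop:transport-lift}, whereas the paper lists and sorts \(O(N)\) entries, and both fit within the stated cubic time and quadratic space.
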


\begin{proof}
Place \(T(d)\) in serpentine order and write the placed matrix as
\(B=H+V+D\). Start in the designated state \(x^0\).
For gap \(g\), read the internal bits of
\(q^g_j=d_{g,j}\mathbin\oplus d_{g+1,j}\), and use the table row
for \((x^g,q^g)\) to obtain the mask \(a^g\), outgoing columns
\(y^g\), and next state \(x^{g+1}\).
If \(w=W+1\), append a zero mask bit at column \(W\).
The mask is zero at both the right boundary of the width-\(W\) strip
and the right boundary of the physical region, so no extra transition
or cost appears outside the strip.

In each gap, put a shear at every selected column, directed from the
odd complete row to the even complete row. The masks need not be
complementary, and these shears need not form a matching. Nevertheless,
row parity gives \(R^2=RHR=RDR=0\), and their internal support gives
\(RV=VR=0\). Thus \(A=(I+R)B(I+R)\) is similar to \(B\).
Lemma~\ref{lem:support} gives its actual added entries and shows that
the path is retained and every dependency is local. The partial row
and its turn remain unchanged.

Across gap \(g\), a cell in column \(j\) has \(v_j\le2\) new
neighbours. At most two complete-row gaps meet a cell, and the path
contributes at most two neighbours. Hence the underlying maximum degree
is at most six, and so are both directed maximum degrees.
The exact count is
\[
 m(A)=2N-2+\sum_{g=0}^{h-2}c(a^g,q^g).
\]
Summing \eqref{eq:policy} cancels the intermediate potentials.
Since \(\Phi_W(x^0)=0\) and \(\Phi_W\ge0\), it gives
\(\sum_g c(a^g,q^g)\le W(h-1)\), proving
\eqref{eq:finitewidth}.

It remains to specify the full minor. For each \(i<h-1\), the intervals
\[
 [\min(x^i_b,y^i_b),\max(x^i_b,y^i_b)]\quad(0\le b<k)
\]
are disjoint and ordered by Lemma~\ref{lem:policies}.
For the last row, set \(y^{h-1}=x^{h-1}\).
Writing \(l_b=\min(x^i_b,y^i_b)\) in each row, extend these intervals to
\[
 [0,l_1-1],\ [l_1,l_2-1],\ \ldots,\ [l_{k-1},w-1].
\]
They partition the row into \(k\) nonempty connected branch sets,
each containing its designated incoming and outgoing columns.
Horizontal path edges give the internal trees and the adjacencies
between consecutive branch sets. The available link from \(y^i_b\)
to \(x^{i+1}_b\) gives every vertical grid adjacency.
By the support calculation, these are actual nonzero dependencies
after cancellation. An appended physical column belongs to the last
branch set, while the partial row remains outside the minor.
The algorithm outputs every branch vertex, internal tree edge and
connecting edge.

The three tables have fixed finite size. The algorithm reads one table
entry for each gap and lists the matrix entries, local rules and
grid-minor witnesses. These lists contain \(O(N)\) items with
\(O(\log N)\)-bit indices. Sorting costs
\(O(N\log^2N)\) bit operations and the additional storage is
\(O(N\log N)\) bits. Together with Proposition~\ref{prop:seed},
these costs give the stated cubic time and quadratic space bounds.
\end{proof}

\begin{corollary}
\label{cor:finitewidth}
For every \(36\le N\le143\) and every supplied monic irreducible
\(p\in\F[x]\) of degree \(N\), the conclusions of
Theorem~\ref{thm:main} hold with the stronger bound
\(m(A)\le2N-2+2\lfloor w/2\rfloor(h-1)\), where \(h,w\)
are the canonical parameters.
If \(p\) is primitive, the resulting automaton has one cycle through
all \(2^N-1\) nonzero states.
\end{corollary}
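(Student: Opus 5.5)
The plan is to verify that every \(N\) in the range \(36\le N\le143\) produces canonical parameters to which Theorem~\ref{thm:finitewidth} applies, and then to combine that theorem with the seed synthesis. From \eqref{eq:canonical-region} we have \(w=\lfloor\sqrt N\rfloor\). The range \(36\le N\le143\) gives \(6\le w\le11\), since \(143<144=12^2\). In the proof of Theorem~\ref{thm:paired} we already showed \(w\le h\le w+2\) and \(0\le r<w\), so in particular \(h\ge2\). Thus \((h,w,r)\) satisfies the hypotheses of Theorem~\ref{thm:finitewidth}, and \(\Gamma_N=\Gamma_{h,w,r}\) has exactly \(N\) cells.

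Next, apply Proposition~\ref{prop:seed} to the supplied irreducible \(p\) to obtain a word \(d\) with \(\chi_{T(d)}=p\). Feed \(d\) to the table construction of Theorem~\ref{thm:finitewidth}. The output \(A\) is similar to \(T(d)\), so \(\chi_A=p\). The same theorem gives Moore locality with a null boundary, the retained bidirectional spanning path (hence strong connectivity), all three degree bounds at most six, the explicit \(h\times\lfloor(w-2)/2\rfloor\) grid minor, and the \(O(N^3)\) time and \(O(N^2)\) space bounds. The square \(t\times t\) minor with \(t=\lfloor\sqrt N/3\rfloor\) follows by restriction, exactly as in the proof of Theorem~\ref{thm:paired}, using \(t=\lfloor w/3\rfloor\le k\le h\). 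The connection bound is \eqref{eq:finitewidth} with \(W=2\lfloor w/2\rfloor\). That is the stated stronger bound, and the identity in \eqref{eq:finitewidth} shows it is below \(3N-(r+W+2)\le 3N-1\). So item~4 of Theorem~\ref{thm:main} also holds.

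The primitive statement then follows exactly as in Corollary~\ref{cor:primitive}. Since \(\chi_A=p\) is irreducible, \(\F^N\) is a cyclic \(\F[A]\)-module isomorphic to \(\F[x]/(p)\), with \(A\) acting as multiplication by \(x\). Primitivity means \(x\) has order \(2^N-1\) in this field, so \(x\) generates the multiplicative group. Hence every nonzero state lies on a single orbit of length \(2^N-1\).

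No step here is a real obstacle, because everything reduces to earlier results. The only point needing care is the parameter check that \(N\le143\) forces \(w\le11\) while \(h\) remains admissible. The substantive content, namely the exhaustive table verification, is already contained in Lemma~\ref{lem:policies}.
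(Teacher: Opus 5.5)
Your proposal is correct and follows the paper's own proof. You check that $36\le N\le143$ places the canonical width $w=\lfloor\sqrt N\rfloor$ in $\{6,\ldots,11\}$, apply Theorem~\ref{thm:finitewidth} to the seed word from Proposition~\ref{prop:seed}, and reuse the field argument of Corollary~\ref{cor:primitive}. Your additional checks ($h\ge2$, restriction to the $t\times t$ square minor, and the use of the identity in \eqref{eq:finitewidth} to obtain $m(A)\le3N-1$) make explicit what the paper's three-line proof leaves implicit.
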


\begin{proof}
The canonical width \(\lfloor\sqrt N\rfloor\) lies between six
and eleven exactly in this range. Apply Theorem~\ref{thm:finitewidth}
to the diagonal word from Proposition~\ref{prop:seed}. The period
conclusion follows from the same field argument as
Corollary~\ref{cor:primitive}.
\end{proof}

These specialised tables give the smaller bound
\(2N-2+W(h-1)\) on the stated widths. The transport construction in
Section~\ref{sec:transport} applies to every width and establishes the
uniform strict bound \(m(A)\le3N-1\). The optimal leading
coefficient under the prescribed-polynomial constraint remains
undetermined, between the lower coefficient \(5/2\) of
Theorem~\ref{thm:lower} and the upper coefficient \(3\) of
Theorem~\ref{thm:main}.

\section{Maximum-period constructions}
\label{sec:period}

\subsection{A supplied primitive polynomial}

\begin{proof}[Proof of Corollary~\ref{cor:primitive}]
The minimal polynomial of \(A\) is a nonconstant monic divisor
of its irreducible characteristic polynomial \(p\), and hence
equals \(p\). The field
\[
 K=\F[x]/(p)\cong\F[A]
\]
therefore acts on \(\F^N\), which is a one-dimensional vector
space over \(K\). Write \(\alpha=x\bmod p\).
For a nonzero state \(v\) and an integer \(j>0\),
\[
 A^jv=v
 \quad\Longleftrightarrow\quad
 (\alpha^j-1)v=0
 \quad\Longleftrightarrow\quad
 \alpha^j=1.
\]
Primitivity gives \(\ord(\alpha)=2^N-1\), so every nonzero
state has that period. There are exactly \(2^N-1\) such states,
and they consequently form one cycle. The zero state is fixed
because the update is linear.
\end{proof}

\subsection{A supplied factorisation}

We give the arithmetic reduction explicitly to identify the input
required by Corollary~\ref{cor:factors}. Once the factor data have
been verified, the following reduction applies to every degree \(n\ge2\).

\begin{proposition}
\label{prop:primitive-acquisition}
Given \(n\ge2\) and a complete prime factorisation
\begin{equation}
  M=2^n-1=\prod_{j=1}^{\rho}q_j^{e_j},
  \label{eq:factorisation}
\end{equation}
where the \(q_j\) are distinct primes and \(e_j>0\), a
deterministic algorithm constructs a monic primitive polynomial
of degree \(n\) over \(\F\) in \(n^{O(1)}\) bit operations.
\end{proposition}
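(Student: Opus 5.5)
The plan is to combine Shoup's deterministic construction of an irreducible polynomial of degree \(n\) with a primitive-element test that uses the supplied factorisation \eqref{eq:factorisation}.

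\emph{Step 1: construct the field.} Use Shoup's deterministic algorithm~\cite{S90} to obtain a monic irreducible \(f\in\F[x]\) of degree \(n\) in \(n^{O(1)}\) bit operations. This gives a concrete model \(K=\F[x]/(f)\) of \(\mathbb F_{2^n}\), with multiplication in \(O(n^2)\) bit operations.

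\emph{Step 2: recognise generators.} An element \(\beta\in K^\times\) generates \(K^\times\) exactly when
\[
 \beta^{M/q_j}\ne1\qquad(1\le j\le\rho).
\]
Each test is one exponentiation with an exponent below \(2^n\), so it costs \(O(n)\) multiplications. Since \(\rho\le n\), checking a candidate costs \(n^{O(1)}\).

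\emph{Step 3: find a generator deterministically.} This is the main obstacle. A naive search over elements of small degree needs Shoup's theorem~\cite{S92}: under a bound of the form \(n^{O(1)}\) on the search length, the set of polynomials in \(x\) of degree \(O(\log n)\) contains a primitive element of \(\mathbb F_{2^n}\) represented via \(f\). If that bound is only available under a hypothesis, there is a fallback. Shoup's deterministic primitive-root search for extension fields, with the factorisation of \(2^n-1\) supplied, guarantees a polynomial-size candidate set containing a generator. Combining it with Step 2 then yields \(\beta\) in \(n^{O(1)}\) operations. I expect the precise citation and the constant in the candidate-set size to be the delicate point, since the argument must avoid any randomness or unproven hypotheses.

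\emph{Step 4: compute the minimal polynomial.} Compute the minimal polynomial \(p\) of \(\beta\) by solving the linear dependency among \(1,\beta,\ldots,\beta^n\) over \(\F\). Gaussian elimination on an \(n\times(n+1)\) matrix costs \(O(n^3)\). Because \(\beta\) has order \(M\), it lies in no proper subfield, so \(\deg p=n\). The residue of \(x\) in \(\F[x]/(p)\cong K\) corresponds to \(\beta\), hence has order \(2^n-1\), and so \(p\) is primitive. Summing the costs of Steps 1–4 gives \(n^{O(1)}\).
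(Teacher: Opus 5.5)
Your proposal is correct and follows the same route as the paper: Shoup's deterministic construction of an irreducible \(f\), then the order test \(\beta^{M/q_j}\ne1\) using the complete factorisation, then a search through residues of low-degree polynomials that Shoup's primitive-element theorem guarantees will succeed, and finally the minimal polynomial by Gaussian elimination. The point you flag as delicate in Step 3 is settled as in the paper: in characteristic two Shoup's theorem is unconditional, with threshold \(2^{\lambda}>c\rho^4(1+\log\rho)^4n^2\), and \(\rho<n\) bounds the search by \(O(n^{10})\) candidates, while small \(n\) with \(\lambda\ge n\) is covered by the cyclicity of \(K^\times\) over the full enumeration; your ``fallback'' is therefore the same theorem, not a separate argument.
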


\begin{proof}
Shoup's irreducible-polynomial algorithm~\cite[Theorem~3.2]{S90}
constructs a monic irreducible \(f\in\F[x]\) of degree \(n\)
using \(O(n^4 P(\log n))\) base-field operations for a fixed polynomial
\(P\), and hence \(O(n^5)\) bit operations over \(\F\).
Here we use the source's convention that its \(n^\epsilon\) factors
denote fixed polynomials in \(\log n\). We work in the polynomial-basis
representation \(K=\F[x]/(f)\).

We also use Shoup's primitive-element
theorem~\cite[Theorem~1]{S92}. Specialised to \(\F\), it states
that an absolute constant \(c>0\) exists such that, whenever
\begin{equation}
 2^{\lambda}>c\,\rho^4(1+\log \rho)^4n^2,
 \label{eq:shoup}
\end{equation}
some monic irreducible \(\theta\in\F[x]\) of degree \(\lambda\)
has primitive residue modulo \(f\).
This fixed-characteristic result is unconditional.
Since
\[
 2^\rho\le\prod_{j=1}^\rho q_j\le 2^n-1<2^n,
\]
we have \(\rho<n\). Increase \(c\) to at least one, set
\[
 B=c\,\rho^4(1+\log_2\rho)^4n^2,
\]
and let \(\lambda_*\) be the least positive integer with
\(2^{\lambda_*}>B\). Since \(B\ge4\),
\[
 2^{\lambda_*}\le2B.
\]
Enumerate the monic binary polynomials of degrees \(0,\ldots,n-1\)
in degree order, using a fixed coefficient-vector order within each
degree. They represent every nonzero element of \(K\) once.
For each candidate, let \(a\) be its residue modulo \(f\),
and test
\begin{equation}
 a^{M/q_j}\ne1
 \qquad(1\le j\le\rho).
 \label{eq:ordertest}
\end{equation}
Stop at the first successful candidate. Every nonzero element of
\(K\) has order \(d\mid M\). If \(d<M\), some prime \(q_j\)
divides \(M/d\), giving \(d\mid M/q_j\); conversely, an element
of order \(M\) passes every test. Hence \eqref{eq:ordertest} is
equivalent to primitivity.
If \(\lambda_*<n\), the candidate guaranteed by
\eqref{eq:shoup} occurs by that degree, and fewer than
\(2^{\lambda_*+1}\le4B\) candidates are tested.
If \(\lambda_*\ge n\), a primitive element occurs somewhere in
the full enumeration, because \(K^\times\) is cyclic; in this
case \(2^n\le2^{\lambda_*}\le2B\).
Thus the same bound of \(4B\) candidates covers both cases,
including the small degrees. The algorithm need not know \(c\)
or \(\lambda_*\): it simply stops when
\eqref{eq:ordertest} holds. The bound controls that stopping
time. Candidates need not themselves be irreducible.

Using schoolbook arithmetic, multiplication in \(K\) costs
\(O(n^2)\) bit operations. Each exponent in
\eqref{eq:ordertest} has at most \(n\) bits, so a modular
exponentiation costs \(O(n^3)\). Since \(\rho<n\), each candidate
requires \(O(n^4)\) bit operations.
Since \(1+\log_2\rho\le n\) for \(n\ge2\), we have \(B\le c n^{10}\).
The search up to the first successful candidate therefore costs
\(O(n^{14})\) bit operations.

Finally, the selected element \(a\) has degree \(n\) over
\(\F\). Otherwise \(\F(a)\) would have order \(2^m\) for some
\(m<n\), forcing \(\ord(a)\le2^m-1<M\).
Compute \(1,a,\ldots,a^n\) and solve
\[
 a^n=\sum_{i=0}^{n-1}b_i a^i
\]
by binary Gaussian elimination. The polynomial
\(x^n+\sum_{i=0}^{n-1}b_i x^i\) is the minimal polynomial
of \(a\), and is primitive. Computing the powers and solving the system cost \(O(n^3)\) bit
operations. This completes the running-time estimate.
\end{proof}

\begin{proof}[Proof of Corollary~\ref{cor:factors}]
Verify the factor data by Appendix~\ref{app:certificates} and
apply Proposition~\ref{prop:primitive-acquisition} with \(n=N\)
to obtain a primitive polynomial. Theorem~\ref{thm:main} then
synthesises the automaton in cubic time, retaining all the stated
geometric properties. Corollary~\ref{cor:primitive} gives the single
cycle through its nonzero states.
Reading and verifying the factor data costs \(O((N+L)^4)\);
constructing the field model, selecting a primitive element
and synthesising the automaton together cost \(O(N^{14})\)
bit operations.
These give \(O((N+L)^{14})\) elementary bit operations.
Charging an additional logarithmic factor for explicit indices
and addressing still gives \(K(N+L)^{16}\) for a fixed \(K\).
\end{proof}

The complete list of prime divisors in \eqref{eq:factorisation}
is essential to the order test. A test using only some of the
prime divisors may accept an element of smaller order.
The certificate check establishes this completeness before
the arithmetic search begins. The exponent sixteen is a conservative
bound for the combined algorithm, rather than a claim of practical
running time.

\section{Examples}
\label{sec:examples}

The three primitive inputs
\[
 \begin{aligned}
 p_{36}(x)&=x^{36}+x^{11}+1,\\
 p_{37}(x)&=x^{37}+x^5+x^4+x^3+x^2+x+1,\\
 p_{64}(x)&=x^{64}+x^4+x^3+x+1
 \end{aligned}
\]
give the examples in Table~\ref{tab:examples}.
These examples use the paired-shear construction of
Theorem~\ref{thm:paired}, with the three primitive polynomials supplied
as input.
The \(N=37\) example uses a \(6\times6\) square and one additional
cell, as in Figure~\ref{fig:fringe}. It illustrates the construction
at a prime degree with exactly \(37\) cells.

\begin{table}[htbp]
\centering
\begin{tabular}{@{}rrrrcc@{}}
\toprule
Region & \(N\) & \(m(A)\) & \(\ell(A)\) & Grid minor &
Maximum in/out degree\\
\midrule
\(6\times6\) & 36 & 102 & 14 & \(6\times2\) & \(6/6\)\\
\(6\times6+1\) cell & 37 & 103 & 16 & \(6\times2\) & \(6/6\)\\
\(8\times8\) & 64 & 195 & 38 & \(8\times3\) & \(6/6\)\\
\bottomrule
\end{tabular}
\caption{Three exact examples obtained by constructing the tridiagonal
path matrix and applying the paired shear. Degrees exclude self-dependence.}
\label{tab:examples}
\end{table}

For the same three inputs, the finite-width construction in
Theorem~\ref{thm:finitewidth} gives directed connection counts
\(94\), \(95\) and \(168\), respectively. Its separate data include
the masks, routing states and full minor certificates. The uniform
construction of Theorem~\ref{thm:main} gives counts
\(94\), \(96\) and \(166\),
respectively, on these inputs. The smaller worst-case bound of the specialised tables does not imply
a smaller connection count for each individual input.

For each example, the accompanying data specify the matrix entries,
the nine-bit local rules and the complete minor witness. The
input polynomial is checked for irreducibility and primitivity,
using complete integer factorisations and exact
modular arithmetic. A separate verifier computes the
characteristic polynomial from the full matrix and checks
locality, strong connectivity, rule encoding and every minor
witness edge. These checks illustrate the construction and its
implementation; the general theorem follows from the preceding
proofs. Appendix~\ref{app:verification} gives the precise
verification procedure and finite test ranges.

\appendix
\section{Deterministic synthesis of the path matrix}
\label{app:seed}

We prove Proposition~\ref{prop:seed}. All arithmetic in this
appendix is over \(\F\), and \(p\) is monic and irreducible of
degree \(n\ge2\). The linear system below is the construction of
Mesirov and Sweet~\cite{MS87}. We include the proof to specify a
deterministic implementation and its cost.

\subsection{The linear system}

Set \(u_j=x^j\bmod p\) for \(0\le j\le2n\), with coefficient
vectors in increasing order of degree. Form the
\((n+1)\times n\) matrix \(M\) with rows
\[
 M_0=(1,0,\ldots,0),\qquad
 M_i=\coeff(u_{i-1}+u_{2i-1}+u_{2i})
 \quad(1\le i\le n).
\]
Solve
\begin{equation}
 Mf=(1,0,\ldots,0,1)^{\mathsf T},
 \qquad f=(f_1,\ldots,f_n)^{\mathsf T},
 \label{eq:seed-system}
\end{equation}
with a fixed left-to-right pivot order, setting free variables
to zero. Define
\begin{equation}
 q(z)=\left[p(z)\sum_{i=1}^n f_i z^{-i}\right]_{\ge0},
 \label{eq:numerator}
\end{equation}
where the subscript retains the terms with nonnegative powers.
We will prove that the Euclidean algorithm for \((p,q)\)
has precisely \(n\) linear quotients, including its final
division by one.

\begin{lemma}
\label{lem:rank}
The system \eqref{eq:seed-system} is consistent and has exactly
two solutions.
\end{lemma}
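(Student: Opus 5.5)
The plan is to read \eqref{eq:seed-system} as conditions on a linear functional on the field \(K=\F[x]/(p)\), and then use the trace form to turn the homogeneous part into an Artin--Schreier-type equation in \(K\). A vector \(f\in\F^n\) is the same as the functional \(\lambda_f\colon K\to\F\) given by \(\lambda_f(u)=\sum_t f_{t+1}\,\coeff_t(u)\) on the polynomial basis. Row \(M_i\) of the system then reads \(\lambda_f(x^{i-1}+x^{2i-1}+x^{2i})\). So \eqref{eq:seed-system} becomes
\[
 \lambda(1)=1,\qquad \lambda\bigl(x^{i-1}(1+x^{i}+x^{i+1})\bigr)=0\ (1\le i<n),\qquad \lambda\bigl(x^{n-1}(1+x^{n}+x^{n+1})\bigr)=1 .
\]
Since \(p\) is irreducible, \(K\) is a finite field and the trace form is nondegenerate. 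Hence every functional has the form \(\lambda(u)=\operatorname{Tr}(\beta u)\) for a unique \(\beta\in K\), and it suffices to count the \(\beta\) satisfying these \(n+1\) conditions.

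The key step uses Frobenius invariance, \(\operatorname{Tr}(c^2)=\operatorname{Tr}(c)\), to halve the exponents \(2i-1,2i\). Here \(x\) and \(x+1\) are units of \(K\) because \(n\ge2\). Write \(\beta(1+x^{-1})=\gamma^2\), so that \(\beta=\gamma^2x/(x+1)\) and \(\gamma\) determines \(\beta\) uniquely. Then
\[
 \operatorname{Tr}\bigl(\beta(x^{2i-1}+x^{2i})\bigr)=\operatorname{Tr}\bigl(\gamma^2x^{2i}\bigr)=\operatorname{Tr}(\gamma x^{i}).
\]
The \(i\)-th condition therefore becomes \(\operatorname{Tr}\bigl((\gamma x+\beta)x^{i-1}\bigr)=0\) for \(1\le i<n\), with the value \(1\) at \(i=n\).

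Next I set \(\varepsilon=\gamma x+\beta=\gamma x+\gamma^2x/(x+1)\). The vanishing conditions for \(1\le i<n\) force \(\varepsilon\) into the one-dimensional orthogonal complement of \(\langle 1,x,\dots,x^{n-2}\rangle\). Together with the condition at \(i=n\), this determines \(\varepsilon\) uniquely; call this value \(\varepsilon_0\). The system then reduces to the single equation
\[
 \gamma^2+(x+1)\gamma=\varepsilon_0(x+1)/x
\]
together with the row-zero constraint \(\operatorname{Tr}(\beta)=1\). Substituting \(\gamma=(x+1)\delta\) converts this into \(\delta^2+\delta=c\) with \(c=\varepsilon_0/\bigl(x(x+1)\bigr)\). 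The map \(\delta\mapsto\delta^2+\delta\) is \(\F\)-linear with kernel \(\{0,1\}\). Hence this equation has either zero or exactly two solutions, and there are two precisely when \(\operatorname{Tr}(c)=0\). The substitution \(\delta\mapsto\delta+1\) changes \(\gamma\) by \(x+1\), and one checks that it preserves \(\operatorname{Tr}(\beta)\). So once consistency is established, the solution count is exactly two.

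The main obstacle is consistency, namely \(\operatorname{Tr}(c)=0\) together with \(\operatorname{Tr}(\beta)=1\). I would first try to show that the row-zero equation is automatically equivalent to this trace condition, by a trace computation on the explicit \(\varepsilon_0\) expressed through the dual basis of \(\{x^i\}\). That dual basis involves \(p'(x)^{-1}\) via Euler's formula, so the computation should be explicit. If that calculation proves unwieldy, the fallback is existential: Mesirov and Sweet~\cite{MS87} show that a tridiagonal realisation \(T(d)\) exists. Running the Euclidean-algorithm correspondence backwards from such a realisation produces an \(f\) satisfying \eqref{eq:seed-system}, which proves consistency. The count of exactly two then follows from the Artin--Schreier step above.
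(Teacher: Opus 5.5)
Your reduction is correct as far as it goes. The correspondences \(f\leftrightarrow\lambda\leftrightarrow\beta\leftrightarrow\gamma\leftrightarrow\delta\) are bijections, and rows \(1,\dots,n\) force \(\varepsilon=\varepsilon_0\). The equation \(\delta^2+\delta=c\) has no solution or exactly two, and \(\delta\mapsto\delta+1\) changes \(\beta=x(x+1)\delta^2\) by \(x+x^2\), which has trace zero. So you have shown that \eqref{eq:seed-system} has either no solution or exactly two.

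The genuine gap is consistency, which is the substantive half of the lemma: you prove neither \(\operatorname{Tr}(c)=0\) nor \(\operatorname{Tr}(\beta)=1\). Your first route is only announced. Your fallback relies on a converse that the paper never proves, since Appendix~\ref{app:seed} goes from a solution of the system to a tridiagonal matrix, not back. That the moments \(f_i=(J^{i-1})_{00}\) of a given \(J=T(d)\) with \(\chi_J=p\) satisfy the system is a separate claim. It is true: \(J\) is symmetric, and over \(\F\) the diagonal of \(J+J^2\) is the degree parity of the path vertices, which gives \((J^{i-1}+J^{2i-1}+J^{2i})_{00}=(J^{i-1})_{n-1,0}\). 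You neither state nor prove this, however. Importing the Mesirov--Sweet existence theorem would also make the appendix depend on the result it is re-deriving.

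The missing step is short in your own notation and needs neither the dual basis nor \(p'\). For every \(u\in K\), \(\beta(x+x^2)u^2=(\gamma xu)^2\), so \(\operatorname{Tr}\bigl(\beta u+\beta(x+x^2)u^2\bigr)=\operatorname{Tr}(\varepsilon u)\). By the definition of \(\varepsilon_0\), \(\operatorname{Tr}(\varepsilon_0u)=\coeff_{n-1}(u)\), the top coefficient of the reduced representative.
\begin{itemize}
\item Using \(1/(x(x+1))=x^{-1}+(x+1)^{-1}\), you get \(\operatorname{Tr}(c)=\coeff_{n-1}(x^{-1})+\coeff_{n-1}\bigl((x+1)^{-1}\bigr)\).
\item Taking \(u=x^{-1}\), for which \(u+(x+x^2)u^2=1\), gives \(\operatorname{Tr}(\beta)=\coeff_{n-1}(x^{-1})\) for every solution of rows \(1,\dots,n\).
\end{itemize}
Since \(p(0)=p(1)=1\), these inverses are represented by \((p(x)+1)/x\) and \((p(x)+1)/(x+1)\), both monic of degree \(n-1\). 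Hence \(\operatorname{Tr}(c)=0\) and \(\operatorname{Tr}(\beta)=1\).

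These are exactly the two facts in the paper's proof, used on the row side instead of the solution side. The paper identifies \(vM_*\) with \(\rho+a\rho^2\), where \(a=\alpha+\alpha^2\). The left kernel of \(M_*\) therefore consists of the coefficient vectors of \(0\) and \(a^{-1}\). Consistency is then the vanishing top coefficient of \(a^{-1}=\alpha^{-1}+(\alpha+1)^{-1}\). The choice \(\rho=\alpha^{-1}\) exhibits \(M_0\) as a combination of the other rows with \(v_n=1\), so row zero is redundant. That transposed argument needs no trace form or quadratic equation. Your dual version, once completed, additionally describes the two solutions explicitly as \(\delta\) and \(\delta+1\).
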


\begin{proof}
Let \(K=\F[x]/(p)\), \(\alpha=x\bmod p\), and
\(a=\alpha+\alpha^2\). Irreducibility and \(n\ge2\) imply
\(\alpha\ne0,1\), so \(a\ne0\).
Write \(M_*\) for the last \(n\) rows of \(M\).
For a row vector \(v=(v_1,\ldots,v_n)\), put
\(\rho=\sum_{i=1}^n v_i\alpha^{i-1}\).
Then \(vM_*\) is the coefficient vector of \(\rho+a\rho^2\).
This map is \(\F\)-linear, and
\[
 \rho+a\rho^2=0
 \quad\Longleftrightarrow\quad
 \rho=0\ \text{or}\ \rho=a^{-1}.
\]
Hence \(\rank M_*=n-1\).

Moreover,
\[
 a^{-1}=\alpha^{-1}+(\alpha+1)^{-1}.
\]
Since \(p(0)=p(1)=1\), the representatives of these last two
inverses are \((p(x)+1)/x\) and \((p(x)+1)/(x+1)\).
Both have degree \(n-1\) and leading coefficient one.
Their sum has zero coefficient at \(x^{n-1}\).
The left kernel of \(M_*\) is therefore orthogonal to
\(e_n=(0,\ldots,0,1)^{\mathsf T}\).
It follows that \(M_*f=e_n\) is consistent and has two
solutions.

Now choose \(v\) representing \(\rho=\alpha^{-1}\).
We have \(\rho+a\rho^2=1\) and \(v_n=1\), so
\(vM_*=M_0\). Every solution of \(M_*f=e_n\) satisfies
\(M_0f=ve_n=1\). Adjoining the first row of \(M\) thus changes
neither the rank nor the solution set.
\end{proof}

\subsection{A nondegenerate sequence of Hankel forms}

Define the linear functional \(L:K\to\F\) by
\(L(\alpha^{i-1})=f_i\) for \(1\le i\le n\).
The system gives \(L(1)=1\) and
\[
 L\bigl(\alpha^j+(\alpha+\alpha^2)\alpha^{2j}\bigr)=0
 \qquad(0\le j\le n-2).
\]
By linearity of squaring over \(\F\), every polynomial \(u\)
of degree at most \(n-2\) satisfies
\begin{equation}
 L\bigl(u(\alpha)+(\alpha+\alpha^2)u(\alpha)^2\bigr)=0.
 \label{eq:moment}
\end{equation}

\begin{lemma}
\label{lem:hankel}
For every \(1\le m\le n\), the Hankel matrix
\[
 H_m=\bigl(L(\alpha^{i+j})\bigr)_{0\le i,j<m}
\]
is nonsingular.
\end{lemma}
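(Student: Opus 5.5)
The plan is to argue by contradiction, using the nondegenerate trace-like pairing on \(K\). The pairing \((\beta,\gamma)\mapsto L(\beta\gamma)\) on \(K\) is nondegenerate, because \(L\ne0\) (since \(L(1)=1\)) and \(K\) is a field. Suppose \(H_m\) is singular for some \(m\le n\). Then there is a nonzero polynomial \(u\) of degree \(d<m\) with \(L(u(\alpha)\alpha^j)=0\) for \(0\le j<m\). Because \(d<n\), the element \(u(\alpha)\) is nonzero in \(K\). So it suffices to show that this annihilation propagates: \(L(u(\alpha)\alpha^j)=0\) for all \(0\le j<n\). Then \(u(\alpha)\) lies in the radical of the pairing, forcing \(u(\alpha)=0\), a contradiction.

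The propagation will come from the moment identity \eqref{eq:moment}, read as \(L(g(\alpha))=L(a\,g(\alpha)^2)\) for \(\deg g\le n-2\), where \(a=\alpha+\alpha^2\). In characteristic two, squaring is additive. Every polynomial \(w\) decomposes uniquely as \(w=s^2+x t^2\), and every \(g\) of degree at most \(n-2\) satisfies \(L(g)=L(ag^2)\). I will first use this with \(g=u v\) for \(\deg v<m\), \(\deg(uv)\le n-2\), which gives \(L(a\,u^2v^2)=0\). I will then use it with \(g\) ranging over products of \(u\) with suitable monomials, to express \(L(u\,\alpha^{j})\) for \(j\ge m\) in terms of values \(L(u\,\alpha^{j'})\) with \(j'<j\), or in terms of the already vanishing values \(L(a u^2 v^2)\). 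Writing \(\alpha^j u=\) (square part) \(+\,\alpha\cdot\)(square part) and applying the identity to the square roots lowers the exponent roughly by half. This makes an induction on \(j\) from \(m\) up to \(n-1\) plausible.

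As a cross-check and alternative route, I would use the normalisation \(L(1)=1\) together with \(L(\alpha^{-1})\)-type relations coming from \(\rho=\alpha^{-1}\) in Lemma~\ref{lem:rank}. These handle the top-degree boundary cases \(j=n-1\) and \(j=n-2\), where \(\deg g\le n-2\) is no longer available directly.

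I expect the main obstacle to be the inductive propagation step. One has to control exactly which exponents the squaring decomposition reaches, and keep every polynomial fed into \eqref{eq:moment} of degree at most \(n-2\). If the direct induction stalls, my fallback is to prove the stronger statement that the sequence \(L(\alpha^i)\) has a Berlekamp–Massey/continued-fraction profile with all jumps equal to one. Concretely, I would show by induction on \(m\) that \(\det H_m=1\) via the three-term recurrence for the monic orthogonal polynomials \(P_{m+1}=(x+c_m)P_m+P_{m-1}\). In that induction, \eqref{eq:moment} is used to show that \(L(P_m^2)=1\) at each stage.
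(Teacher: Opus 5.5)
Your reduction to the radical of the pairing \((\beta,\gamma)\mapsto L(\beta\gamma)\) is sound, but it carries none of the weight. Everything rests on the propagation \(L(u(\alpha)\alpha^j)=0\) for \(m\le j<n\), and the mechanism you describe cannot deliver it. Write \(s_i=L(\alpha^i)\). Then \eqref{eq:moment} says exactly \(s_j+s_{2j+1}+s_{2j+2}=0\) for \(0\le j\le n-2\). This trades an even-indexed moment for an odd-indexed one plus a moment of half the index, and it leaves the odd-indexed moments free. Splitting \(u\alpha^j\) into even and odd parts therefore lowers the degree only for elements of the special form \(g+ag^2\). In general, \(L(u\alpha^j)\) for \(j\ge m\) involves odd moments that the identity does not control. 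No kernel vector actually exists, so any valid propagation would first have to extract a contradiction from the level-\(m\) kernel relations themselves. Your induction on \(j\) never does this, and the remark about \(\rho=\alpha^{-1}\) does not supply it either.

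Your fallback names the right framework: monic orthonormal polynomials, which the paper constructs from this lemma afterwards. But its only nontrivial step, \(L(P_m^2)=1\) ``by the moment identity'', is asserted without argument, and that step is the entire content of the lemma. The missing idea is to split off the constant term and apply \eqref{eq:moment} to the quotient, not to \(u\) times monomials. Take a nonzero kernel polynomial \(v\) of least degree. Since \(L(1)=1\), \(v\) is nonconstant, so \(v=b+xw\) with \(\deg w\le m-2\le n-2\). The kernel relations with \(u=v\) and \(u=w\), together with \eqref{eq:moment} applied to \(w\), give
\[
 0=L(v^2)=b+L(w)+L(\alpha w^2),\qquad 0=L(vw)=bL(w)+L(\alpha w^2).
\]
Adding them yields \(b+(1+b)L(w)=0\), which forces \(b=0\) and \(L(w)=0\). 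Moreover \(L(\alpha^jw)=L(\alpha^{j-1}v)=0\) for \(1\le j<m\), so \(w\) is a kernel polynomial of smaller degree, a contradiction. This descent is the paper's proof. In your inductive formulation, the same two identities with \(P_m=b+xw\) give \(L(P_m^2)=b+(1+b)L(w)\). When \(b=0\), nonsingularity of \(H_m\) forces \(L(w)=1\), since otherwise \(w\) would lie in the kernel of \(H_m\).
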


\begin{proof}
Suppose that \(H_m\) is singular. Identify its kernel vectors with
polynomials of degree less than \(m\), and choose a nonzero such
polynomial \(v\) of least degree. Thus
\[
 L(v(\alpha)u(\alpha))=0\qquad(\deg u<m).
\]
The polynomial \(v\) is not constant because \(L(1)=1\).
Write \(v(x)=b+xw(x)\), where \(b\in\F\) and
\(\deg w\le m-2\). Suppress evaluation at \(\alpha\) inside
\(L\). The kernel relations with \(u=v,w\), together with
\eqref{eq:moment}, yield
\[
 0=L(v^2)=b+L(\alpha^2w^2)=b+L(w)+L(\alpha w^2),
 \qquad
 0=L(vw)=bL(w)+L(\alpha w^2).
\]
Adding gives \(b+(1+b)L(w)=0\). The case \(b=1\) is
impossible, so \(b=0\) and \(L(w)=0\).
For \(1\le j<m\),
\[
 L(\alpha^j w)=L(\alpha^{j-1}v)=0.
\]
Hence the nonzero polynomial \(w\), whose degree is smaller
than that of \(v\), lies in the same kernel, a contradiction.
\end{proof}

\subsection{The tridiagonal multiplication matrix}

Equip \(K\) with the symmetric bilinear form
\(\langle u,v\rangle=L(uv)\).
Apply Gram--Schmidt to \(1,\alpha,\ldots,\alpha^{n-1}\), in this order.
Lemma~\ref{lem:hankel} shows that the restriction of the form to
each initial polynomial subspace is nondegenerate.
At each step, let \(v\) be the vector remaining after subtracting the
earlier projections. Then \(\langle v,v\rangle\ne0\); otherwise \(v\)
would lie in the radical of the corresponding initial subspace.
Since the base field is \(\F\), we have \(\langle v,v\rangle=1\). We obtain monic polynomial
representatives \(P_0,\ldots,P_{n-1}\) with
\[
 P_0=1,\qquad \deg P_j=j,\qquad
 \langle P_i,P_j\rangle=\delta_{ij}.
\]

Multiplication by \(\alpha\) is self-adjoint for this form.
For \(i\le j-2\), the element \(\alpha P_i\) has a polynomial
representative of degree at most \(j-1\), giving
\(\langle P_i,\alpha P_j\rangle
=\langle\alpha P_i,P_j\rangle=0\).
For \(j<n-1\), the coefficient of \(P_{j+1}\)
in \(\alpha P_j\) is one by monicity.
Self-adjointness gives the same entry on
the other adjacent diagonal. Thus its matrix \(J\) in this
basis is symmetric tridiagonal, with ones on both adjacent
diagonals. In the polynomial basis, the same linear map has
the companion matrix of \(p\). Therefore \(\chi_J=p\).

\subsection{Euclidean quotients and the output word}

Extend \(f_i=L(\alpha^{i-1})\) to all \(i\ge1\).
Writing \(p(x)=x^n+\sum_{j=0}^{n-1}p_jx^j\) gives
\[
 f_{n+i}=\sum_{j=0}^{n-1}p_j f_{i+j}\qquad(i\ge1).
\]
The formal Laurent series \(F(z)=\sum_{i\ge1}f_i z^{-i}\)
then satisfies \(p(z)F(z)=q(z)\), with \(q\) as in
\eqref{eq:numerator}. Since \(f_1=1\), the polynomial \(q\)
is monic of degree \(n-1\).
In the orthonormal basis \(P_0,\ldots,P_{n-1}\), the coefficient of
\(P_0=1\) in \(u\) is \(L(u)\). Let \(e_0\) be the first standard
basis vector. Expanding the resolvent therefore gives
\[
 F(z)=e_0^{\mathsf T}(zI-J)^{-1}e_0.
\]
Cramer's rule identifies \(q\) as the characteristic
polynomial of the trailing \((n-1)\times(n-1)\) submatrix
of \(J\).

Write the diagonal of \(J\) as \(a_1,\ldots,a_n\), using
one-based indices in this paragraph. Put
\[
 Q_j(z)=\det\bigl(zI-J[\{j+1,\ldots,n\}]\bigr)
 \quad(0\le j\le n),\qquad Q_{n+1}=0.
\]
Thus \(Q_0=p\), \(Q_1=q\), and \(Q_n=1\).
Expansion of the tridiagonal determinant gives
\[
 Q_j=(z+a_{j+1})Q_{j+1}+Q_{j+2}
 \qquad(0\le j<n).
\]
For \(0\le j\le n\), the polynomial \(Q_j\) is monic of degree \(n-j\).
These equations are precisely the steps of the Euclidean
algorithm for \((p,q)\), including the division by \(Q_n=1\).
It follows that its quotients are
\(z+a_1,\ldots,z+a_n\).
Returning the word
\[
 d=(a_n,a_{n-1},\ldots,a_1)
\]
gives the reversal of the same path matrix, and hence
\(\chi_{T(d)}=p\). This procedure computes \(d\) without
constructing the orthogonal basis.

Computing \(x^j\bmod p\) successively for \(0\le j\le2n\)
takes \(O(n^2)\) bit operations and uses \(O(n^2)\) bits of space.
The binary system has \(n+1\) rows and \(n\) unknowns, so
Gaussian elimination takes \(O(n^3)\) bit operations and
\(O(n^2)\) bits of space. The numerator and the Euclidean
divisions also fit within this bound. This proves
Proposition~\ref{prop:seed}.

\section{Verifying the supplied factor data}
\label{app:certificates}

We use a recursive certificate based on the complete factorisation of
an integer's predecessor, following the method of Pratt~\cite{P75}.
The format below is chosen to make the input cost in
Corollary~\ref{cor:factors} explicit.

\subsection{Encoding and verification}

Fix the degree \(n\ge2\) and put \(M=2^n-1\).
All integers are encoded in canonical binary form, and the records and
child lists have explicit length delimiters. Certificates are trees:
every child is written in full, with no references to other nodes.
The total length \(L\) includes the top-level factor list and every
certificate tree. A parser checks all delimiters and consumes the
entire input.

The label \(2\) has a leaf certificate. A certificate for a label
\(m>2\) gives a witness \(2\le a<m\), pairwise distinct child labels
\(s_j\), positive exponents \(b_j\), and a certificate for every child.
The verifier requires \(m\) to be odd, all node labels to lie in
\([2,M]\), every child label to be smaller than its parent, and
all exponents to lie in \([1,n]\). It verifies the children recursively,
checks
\begin{equation}
 m-1=\prod_j s_j^{b_j},
 \label{eq:cert-product}
\end{equation}
and tests
\begin{equation}
 a^{m-1}\equiv1\pmod m,\qquad
 \gcd\bigl(a^{(m-1)/s_j}-1,m\bigr)=1
 \quad\text{for every }j.
 \label{eq:cert-tests}
\end{equation}
In the greatest common divisor tests, the powers are first reduced
modulo \(m\).

The top-level list gives distinct labels \(q_j\), positive exponents
\(e_j\le n\), and a certificate for each \(q_j\).
After validating these records, the verifier checks that
\(\prod_jq_j^{e_j}=M\). This establishes the completeness needed
in \eqref{eq:ordertest}.
Malformed records, noncanonical integers, invalid labels and
unconsumed trailing data are rejected.

\subsection{Correctness and cost}

We prove by induction on the certificate tree that every accepted
label is prime. The leaf label \(2\) is prime. Suppose a nonleaf
label \(m\) passes, and let \(p_0\) be any prime divisor of \(m\).
The order \(d\) of \(a\) modulo \(p_0\) divides both \(m-1\)
and \(p_0-1\). For every \(s_j^{b_j}\) in
\eqref{eq:cert-product}, the greatest common divisor test implies
\(d\nmid(m-1)/s_j\). Since the children are prime, this forces
\(s_j^{b_j}\mid d\) for each \(j\), and hence \(d=m-1\).
Thus \(m-1\mid p_0-1\), giving \(p_0\ge m\).
As \(p_0\mid m\), we obtain \(p_0=m\), proving primality.

Conversely, every prime label has such a certificate.
For a prime \(m>2\), factor \(m-1\) completely, choose a generator
of \((\mathbb Z/m\mathbb Z)^\times\), and recursively certify the
prime factors of \(m-1\). The generator satisfies
\eqref{eq:cert-tests}. This proves existence of the input
certificates; their production is not a step of the synthesis algorithm.
The cyclicity used here, and for \(K^\times\) in
Proposition~\ref{prop:primitive-acquisition}, follows from the
root bound for polynomials over a field. For a finite field \(F\)
and each prime power \(\pi^b\) dividing \(|F^\times|\),
some element has order divisible
by \(\pi^b\); otherwise every element would be a root of
\(x^{|F^\times|/\pi}-1\). Taking elements of these prime-power
orders and multiplying them gives a generator.

There are \(O(L)\) nodes and child records.
Before any integer power is formed, the verifier checks its exponent
against \(n\). Product calculations are capped at their target:
a multiplication exceeding \(m-1\), or \(M\) at the top level,
causes rejection. Consequently, the retained operands have \(O(n)\)
bits, including for invalid input. Schoolbook modular exponentiation
and greatest common divisor calculations take \(O(n^3)\) bit
operations per child record. Pairwise distinctness tests cost at
most \(O(L^2n)\). Reading, parsing and all verification therefore
use \(O((n+L)^4)\) bit operations.
This bound also covers rejection; an incomplete list cannot reach
the primitive-element search.

\section{Exact degree formulas}
\label{app:degrees}

The following formulas give the individual degrees in
Proposition~\ref{prop:counts}, including boundary cells and
the possible partial row.
Let \(b(i,j)=1\) at the two endpoints of the serpentine path
on \(\Gamma_{h,w,r}\), and \(b(i,j)=2\) elsewhere.
Define
\[
 \alpha_j=a^+_j,\qquad \beta_j=a^-_j,
\]
with both masks zero outside \(1,\ldots,2k\). Put
\[
 \tau_\alpha(j)=\alpha_j\mathbin\oplus\alpha_{j+1},\qquad
 \tau_\beta(j)=\beta_j\mathbin\oplus\beta_{j+1}
 \quad(0\le j<w-1),
\]
and set both transition indicators to zero outside this range. Define
\[
 F_\alpha(j)=\tau_\alpha(j-1)+\tau_\alpha(j),\qquad
 F_\beta(j)=\tau_\beta(j-1)+\tau_\beta(j).
\]
These ordinary integer sums count the cross-column entries in
\eqref{eq:four} incident with a cell at column \(j\).

For an odd complete row \(i\), let
\begin{align*}
 \delta_o(i,j)&=F_\alpha(j)+\one_{\{i+1<h\}}F_\beta(j),\\
 \eta_o(i,j)&=\alpha_j(d_{i,j}\mathbin{\oplus}d_{i-1,j})
  +\one_{\{i+1<h\}}\beta_j(d_{i,j}\mathbin{\oplus}d_{i+1,j}).
\end{align*}
Then
\[
 \degin(i,j)=b(i,j),\qquad
 \degout(i,j)=b(i,j)+\delta_o(i,j)+\eta_o(i,j).
\]
For an even complete row \(i\), let
\begin{align*}
 \delta_e(i,j)&=\one_{\{i+1<h\}}F_\alpha(j)+\one_{\{i>0\}}F_\beta(j),\\
 \eta_e(i,j)&=\one_{\{i+1<h\}}\alpha_j(d_{i+1,j}\mathbin{\oplus}d_{i,j})
  +\one_{\{i>0\}}\beta_j(d_{i-1,j}\mathbin{\oplus}d_{i,j}).
\end{align*}
Then
\[
 \degout(i,j)=b(i,j),\qquad
 \degin(i,j)=b(i,j)+\delta_e(i,j)+\eta_e(i,j).
\]
Every term whose indicator is zero is omitted before any
diagonal entry is read. In particular, these formulas never
refer to an exterior cell. Every partial-row vertex has
\(\degin=\degout=b\), since no shear meets that row. We have
\(\delta_o,\delta_e\le3\) and \(\eta_o,\eta_e\le1\).
Counting self-dependence increases either directed degree bound
from six to seven. These formulas concern the paired shear;
for the transport and finite-width constructions, incidences are counted
gap by gap as in Proposition~\ref{prop:transport-lift} and
Theorem~\ref{thm:finitewidth}.

\section{Finite-state proofs of the routing inequalities}
\label{app:transport}

We first prove Lemma~\ref{lem:transport-row} using a dynamic program with a
fixed set of states, then synchronise two copies for
Lemma~\ref{lem:pair6} and restrict the input to the standard word
for Lemma~\ref{lem:first3}. In each case a finite certificate bounds the minimum accepting
cost for every input allowed by the lemma. The state transitions describe
the interval and link conditions exactly. The certificate is checked by integer arithmetic;
its closure under each input letter then gives the result for every
width by induction.

\subsection{States, transitions and weights}

Immediately before column \(j\), let \(X,Y,Z\) count the ones
already read in \(x,y,z\), respectively. Use the state
\[
 s=(\rho,\sigma,\alpha,\beta)
   =(X-Y,Y-Z,a_{j-1},a_j),
\]
where \(\rho,\sigma\in\{-1,0,1\}\) and
\(\alpha,\beta\in\{0,1\}\). There are \(36\) states, ordered
lexicographically and numbered from zero. The initial and accepting
state is \(s_*=(0,0,0,0)\), with index \(16\). The virtual mask
bit before column zero is zero.

At a column, the input consists of its bits \(x_j,q_j\) and the
balance \(b=B_j(x)\). Since \(X-Z=\rho+\sigma\), the current
output balance is \(b_z=b-2(\rho+\sigma)\). Choose
\((\gamma,y_j,z_j)=(a_{j+1},y_j,z_j)\), and put
\[
 L=\alpha\mathbin\oplus\beta,\qquad
 R=\beta\mathbin\oplus\gamma,\qquad V=\beta q_j.
\]
Retain only choices with \(L+R+V\le2\). The successor differences
\(\rho',\sigma'\) are determined by the following rules.

For the horizontal intervals, use
\[
\begin{array}{c|c|c}
 \rho&\text{requirement}&\rho'\\ \hline
 0&\text{none}&x_j-y_j\\
 1&x_j=0&1-y_j\\
 -1&y_j=0&-1+x_j
\end{array}
\]
For the links to the next row, a nonzero \(\sigma\) first requires
\(L=1\). If \(\sigma=0\), then \(y_j=z_j=0\) gives
\(\sigma'=0\); \(y_j=z_j=1\) requires \(V=1\) and gives
\(\sigma'=0\); and unequal bits require \(R=1\) and give
\(\sigma'=y_j-z_j\). If \(\sigma=1\), require \(z_j=1\),
and also \(R=1\) when \(y_j=1\), and set \(\sigma'=y_j\).
If \(\sigma=-1\), require \(y_j=1\), and also \(R=1\) when
\(z_j=1\), and set \(\sigma'=-z_j\).

The input and output balances, both before and after the column, must
lie in \([-3,3]\). Their next values are
\(b+2x_j-1\) and \(b+2x_j-1-2(\rho'+\sigma')\).
The next state is \((\rho',\sigma',\beta,\gamma)\).
At the first and last columns impose \(z_j=0\); at the last column
also impose \(\beta=\gamma=0\). Begin and end at \(s_*\).

These rules are equivalent to the geometric conditions of
Lemma~\ref{lem:transport-row}. For the horizontal intervals joining the one positions of \(x\) and
\(y\), strict separation allows at most one unfinished interval. Its orientation is recorded by \(\rho\). If an \(x\)
endpoint has opened it, another \(x\) endpoint is forbidden until
after it closes; the symmetric rule applies to a \(y\) endpoint.
The prohibition also applies in the closing column, excluding a shared
endpoint. Simultaneous endpoints at \(\rho=0\) give a singleton.
This proves the three transition rules for \(\rho\) in both
directions.

For the links joining the one positions of \(y\) and \(z\), displacement
at most one means that a pending link must close at column \(j\) using
an available link between columns \(j-1\) and \(j\). If column \(j\)
also opens the next link, an available link between columns \(j\) and
\(j+1\) is required. With no pending link, simultaneous endpoints require vertical
support. These are precisely the rules for \(\sigma\), and they
preserve label order. Conversely, they reconstruct the ordered links
one by one. The incidence condition counts all added edges, whether or
not a chosen link uses them. Acceptance at \(s_*\) equates the
three numbers of ones and closes every pending interval and link.
Neither endpoint of \(y\) is forced to zero.

Assign each transition the integer weight
\begin{equation}
 \omega=2R+V-2x_j
       +|b-2(\rho+\sigma)+1|z_j-|b+1|x_j.
 \label{eq:transport-weight}
\end{equation}
The sum along an accepting path is exactly
\(c(a,q)-2k+D(z)-D(x)\), by \eqref{eq:transport-local}.
Each mask transition is counted at its left column, and the final
virtual transition is zero. The endpoint bits in both potentials are
zero. Thus minimum-cost dynamic programming computes the minimum of
this expression over all legal outputs. Negative weights cause no
difficulty: the input has fixed finite length and each transition
advances one column.

\subsection{A finite certificate for arbitrary width}

A cost vector has one coordinate in \(\mathbb Z\cup\{+\infty\}\)
for each of the \(36\) states. For an interior input letter
\((x,q)\), let \(T_{b,x,q}\) assign to each successor state the
minimum, over all permitted transitions into that state, of the
predecessor cost plus the transition weight.
Unreachable states have cost \(+\infty\). This update is monotone
in the componentwise order. Let \(\mathcal T_b\) append the actual
last zero column and then take the cost at \(s_*\); this terminal
operation is also monotone.

After the first zero column and some interior columns, record a context
\((b,\ell)\), where \(b\) is the input balance and \(\ell\)
is the number of processed interior columns, capped at four. The
initial context is \((-1,0)\). An interior letter \((x,q)\) is
allowed if \(-3\le b+2x-1\le3\); either value of \(q\) is
allowed. Its new context is
\((b+2x-1,\min\{4,\ell+1\})\).

The file \texttt{cover.json} in \texttt{supplement/transport/ft}
specifies a directed graph with \(277\) nodes and \(1072\) edges.
Each node has a context and a full \(36\)-coordinate vector \(V\).
Each edge is labelled by one of the four input letters
\((0,0),(0,1),(1,0),(1,1)\). Exact verification establishes:
\begin{enumerate}
 \item Every node is reachable from the designated root. The root has
       context \((-1,0)\), and its vector is a componentwise upper bound for
       the minimum-cost vector after the first column, whose finite entries are \(v_9=2\),
       \(v_{16}=0\), and \(v_{17}=2\).
 \item Each node has exactly one outgoing edge for each allowed letter,
       no edge for a forbidden letter, and the correct successor context.
 \item For each edge \(u\xrightarrow{(x,q)}v\),
       \begin{equation}
        V_v\ge T_{b_u,x,q}(V_u)
        \label{eq:transport-cover}
       \end{equation}
       holds in every coordinate, with \(+\infty\) larger than every
       integer.
 \item Every node with \(\ell=4\) and \(b\in\{-1,-2\}\) has
       finite terminal cost \(\mathcal T_b(V)\le4\). There are
       \(94\) such nodes. Both values of the last \(q\) bit give
       the same terminal cost because the last mask bit is zero.
\end{enumerate}
The checker reconstructs the transitions in two different encodings
and verifies these statements directly from the certificate entries.
It retains every vector coordinate, including infinite coordinates.
Reproduction details appear in Appendix~\ref{app:verification}.

To deduce the unrestricted result, take any input in
Lemma~\ref{lem:transport-row} and follow the certificate edges for its
interior letters. Its true minimum-cost vector is componentwise at
most the certificate vector. This holds at the root by the first check.
If it holds at a node, monotonicity and \eqref{eq:transport-cover}
give it at the next node. Since \(w\ge6\), the final interior
context has \(\ell=4\). The input has \(k\) ones and a last zero,
so its balance just before that zero is \(-1\) for even \(w\)
and \(-2\) for odd \(w\). The terminal check and monotonicity
therefore give a finite accepting cost at most four. By
\eqref{eq:transport-weight}, this is \eqref{eq:transport-row}.
The first and last \(q\) bits have no effect because the corresponding
mask bits vanish, so the argument covers every \(q\in\F^w\).

The certificate bounds the vector of minimum costs; it need not choose
an output before the whole input is known. To compute an output, run
the dynamic program on the supplied word, retain a minimising predecessor
at each state and column, and backtrack from the final state. Fixing an
enumeration order resolves ties deterministically. The number of states
and choices per column is bounded by an absolute constant, and every
transition weight lies
between \(-5\) and \(6\). Hence finite intermediate costs use
\(O(\log w)\) bits. The time and space bounds are
\(O(w\log w)\), completing the proof of
Lemma~\ref{lem:transport-row}.

\subsection{Joint routing across two gaps}
\label{app:pair6}

For the two routes in Lemma~\ref{lem:pair6}, keep a pair
\((s_0,s_1)\) of row states. There are \(36^2=1296\) states,
indexed by \(36s_0+s_1\); the initial and accepting state
\((s_*,s_*)\) has index \(592\). At a column with input balance
\(b=B_j(x)\), the first row transition reads \((x_j,q^0_j)\)
and outputs \(u_j\). If its current differences are
\(\rho_0,\sigma_0\), the second transition reads precisely
\[
 (u_j,q^1_j),\qquad b_1=b-2(\rho_0+\sigma_0).
\]
The second transition outputs \(z_j\). Both transitions use the
actual first- and last-column conditions above. Thus the middle bit
and its prefix balance are shared, and all three interface words
satisfy their balance constraints.

Every accepting path gives two legal routes by the single-row
correspondence. Acceptance equates the numbers of ones in
\(x,y^0,u,y^1,z\) and closes all pending intervals and links.
Conversely, two legal routes with the same middle word determine an
accepting path. In particular, the middle interface is not selected
independently by the two row computations.

Give a joint transition the sum of its two weights
\eqref{eq:transport-weight}. The contribution of \(u_j\) to the
two potentials cancels at each column. Along a complete path, the
cardinality terms sum to \(-2\#x-2\#u=-4k\), so the total
weight is exactly
\[
 c(a^0,q^0)+c(a^1,q^1)-4k+D(z)-D(x).
\]
This includes every support entry in both gaps, even when it is not
selected as a minor witness.

Let \(P_{b,x,q^0,q^1}\) be the resulting min-plus operator on
\(1296\)-coordinate cost vectors, and let \(\mathcal P_b\)
append the true last zero column and take the cost at state \(592\).
Both operators are monotone. The endpoint bits of \(q^0,q^1\)
have no effect because the endpoint masks are zero. The exact
post-first-column vector is the product of the two single-row
vectors: its nine finite coordinates are \(36s_0+s_1\), with
\(s_0,s_1\in\{9,16,17\}\), and their costs are the sums of the
corresponding entries \((2,0,2)\).

The joint certificate has \(11287\) nodes, \(86636\) labelled
edges and \(2098\) terminal nodes. As before, a context
\((b,\ell)\) records the input balance and the number of interior
columns read, capped at four. Each node carries a literal extended-integer
vector \(V\). Exact verification checks the following assertions:
\begin{enumerate}
 \item The root has context \((-1,0)\), dominates the exact
       post-first-column vector, and reaches every certificate node.
 \item There is exactly one outgoing edge for every allowed letter
       \((x,q^0,q^1)\). The successor context is
       \((b+2x-1,\min\{4,\ell+1\})\), and an edge is allowed
       precisely when the new balance lies in \([-3,3]\).
 \item Every edge \(v\xrightarrow{(x,q^0,q^1)}v'\) satisfies
       \(V_{v'}\ge P_{b_v,x,q^0,q^1}(V_v)\) in all coordinates.
 \item At every node with \(\ell=4\) and \(b=-1\),
       \(\mathcal P_{-1}(V)\) is finite and at most six.
\end{enumerate}
The terminal values range from \(-4\) to six. The checker constructs
the synchronised transitions from a separate count-equation encoding
of the row geometry. All arithmetic is exact; omitted coordinates
represent \(+\infty\), and no finite costs are discarded or shifted.

For any even \(w\ge6\) and any input in
Lemma~\ref{lem:pair6}, follow the certificate edges for its
\(w-2\) interior letters. The true minimum-cost vector is bounded
above by the certificate vector at the root. Monotonicity and the
third check preserve this relation at each letter. Since
\(2k=w-2\), the final interior context is \((-1,4)\). The
fourth check therefore gives a finite accepting cost at most six,
which proves \eqref{eq:pair6} for every even width.

For the deterministic construction, retain a minimising predecessor
at each state and column and backtrack from the accepting state.
Fixed enumeration resolves ties. The state space and the transition
weights are bounded independently of \(w\), so finite accumulated
costs have \(O(\log w)\) bits. Time and space are
\(O(w\log w)\). As in the single-row proof, the output is chosen
after both input words are known; the certificate does not assert an
online choice of routes.

\subsection{The standard initial interface}
\label{app:first3}

For Lemma~\ref{lem:first3}, the input is \(s_w=0(10)^k0\),
with \(k\ge2\). Use the same \(36\)-state row model and the
operators \(T_{b,x,q}\). After the true first zero column, its
exact cost vector \(V^0\) has
\[
 V^0_9=2,\qquad V^0_{16}=0,\qquad V^0_{17}=2,
\]
with all other entries infinite. An interior \(10\) block begins
at balance \(-1\), passes through balance zero and returns to
\(-1\). For its two difference bits \((q_1,q_0)\), define
\begin{equation}
 U_{q_1,q_0}=T_{0,0,q_0}\circ T_{-1,1,q_1}.
 \label{eq:first3-block}
\end{equation}
Both column transitions retain their original conditions, including
the intermediate output balance and the incidence bound. The grouping
changes only the way the fixed input pattern is read.

The certificate has \(43\) nodes and \(172\) edges. A node
contains a full \(36\)-coordinate vector and a count
\(e\in\{0,1,2\}\) of completed blocks, capped at two. Its
independent checker verifies:
\begin{enumerate}
 \item The root has count zero and vector at least \(V^0\), and
       every node is reachable from it.
 \item Every node has exactly one edge for each of the four labels
       \((q_1,q_0)\), with target count \(\min\{2,e+1\}\).
 \item On each edge \(v\xrightarrow{(q_1,q_0)}v'\),
       \(V_{v'}\ge U_{q_1,q_0}(V_v)\) in every coordinate.
 \item At every count-two node, appending the true last zero gives
       \(\mathcal T_{-1}(V)\le3\), with a finite value.
\end{enumerate}
There are \(38\) terminal checks, with values between zero and
three. The comparisons use literal integers and infinity, without
normalising the vectors or omitting finite entries.

Given any \(k\ge2\) and any \(q\in\F^{2k+2}\), follow the
edge for each interior block. Monotonicity of
\eqref{eq:first3-block} proves inductively that the actual cost
vector is at most the current certificate vector. After \(k\)
blocks the count is two. The terminal check bounds the accepting cost
by three. Since \(D(s_w)=0\), this is exactly
\eqref{eq:first3}. The endpoint bits of \(q\) remain irrelevant
because their mask bits vanish. The original single-row optimiser
minimises this same objective over all legal outputs, so it produces
the required route with its existing \(O(w\log w)\) bounds.

\section{Exact verification and reproducibility}
\label{app:verification}

OpenAI Codex AI assistants were used to support literature searches, prepare the synthesis, and develop verification code. The author checked key sources and computational results. The paired-shear result in
Theorem~\ref{thm:paired} has an analytic proof; its finite tests check
the implementation and examples. Theorem~\ref{thm:main} depends on
the three exact finite certificates in Appendix~\ref{app:transport}.
The specialised table verification establishes
Lemma~\ref{lem:policies}, used in Theorem~\ref{thm:finitewidth}.
The tests of individual constructed matrices are distinct from these
proof-critical finite verifications.

The accompanying implementation uses the Python standard
library and requires Python 3.10 or later.
The seed routine uses a fixed pivot order and sets all free
variables to zero. The geometric routine outputs the occupied
coordinates, the sparse matrix, its local rules, and the complete
rectangular minor witness.
It does not use a polynomial table or an integer factor table
in the supplied-polynomial synthesis.

For each cell, the source offsets are ordered as
\[
 \mathrm{NW},\mathrm{N},\mathrm{NE},\mathrm{W},\mathrm{C},
 \mathrm{E},\mathrm{SW},\mathrm{S},\mathrm{SE}.
\]
The first bit, \(\mathrm{NW}\), is the least significant bit
of the integer mask. The central bit records self-dependence;
offsets outside the occupied region have coefficient zero.
The data store the diagonal word in serpentine order and
matrix indices in row-major order.

The original full-matrix verifier imports neither the seed generator nor
the shear generator. It checks every nonzero matrix entry for locality
and verifies strong connectivity using forward and reverse reachability
in the dependency graph. It also
checks rule masks, disjointness and internal connectivity of
branch sets, and every edge between adjacent branch sets.
Its characteristic-polynomial computation uses the
division-free Berkowitz recurrence~\cite{B84}, independently
of the path determinant recurrence.
For a block matrix
\[
 M=\begin{pmatrix}a&r\\ c&D\end{pmatrix},
\]
the recurrence multiplies the coefficient vector of the characteristic
polynomial of \(D\) by the lower-triangular Toeplitz matrix with
first column \((1,a,rc,rDc,\ldots)^{\mathsf T}\).
In characteristic two, the usual minus signs are plus signs.
The identity follows by expanding
\((xI-D)^{-1}=\sum_{j\ge0}D^j x^{-j-1}\) in the Schur
determinant formula and comparing the polynomial parts.

\subsection{Checks for the paired shear}

The paired-shear test suite has the following scope:
\begin{enumerate}
 \item All \(256\) and \(512\) binary diagonal words are checked on
       \(\Gamma_{2,4,0}\) and \(\Gamma_{2,4,1}\), respectively.
       These auxiliary regions have eight and nine cells.
 \item For every \(36\le N\le255\), four fixed words are checked
       on \(\Gamma_N\): the zero word, the all-one word, the physical
       row-parity word, and the word whose path-index bit at \(t\) is
       \(\lfloor(17t^2+13t+5)/4\rfloor\bmod2\).
       This gives \(880\) geometric outputs.
 \item The three supplied inputs in Table~\ref{tab:examples} are
       checked separately, including irreducibility and the full-matrix
       characteristic polynomial.
\end{enumerate}
For all \(1651\) instances, the test suite checks similarity over
\(\F\), retention of the spanning path, the matching property of the
shear pairs, the degrees and edge counts, and the full rectangular minor.
The \(880\) geometric tests use
similarity rather than a separate characteristic-polynomial computation;
the remaining \(771\) instances recompute that polynomial by the
division-free verifier. The eight- and nine-cell cases check boundary
behaviour and do not extend Theorem~\ref{thm:main} below \(N=36\).
The recorded results are in \texttt{CHECKS.json}.

For each of the three examples, irreducibility is checked by
\[
 x^{2^n}\equiv x\pmod p,\qquad
 \gcd(x^{2^{n/q}}-x,p)=1
 \quad\text{for every prime }q\mid n.
\]
The first congruence forces \(p\) to be square-free and every
irreducible factor degree to divide \(n\).
If \(p\) were reducible, one such degree would be a proper
divisor of \(n\), and would divide \(n/q\) for some prime
\(q\mid n\), contradicting the corresponding greatest
common divisor test. Primitivity is then checked by
\eqref{eq:ordertest}, with the complete prime factorisation
of \(2^n-1\); the primality of the listed factors is also
checked exactly.

The supplement contains the generators, independent verifiers, example
data and exact arithmetic records. From a writable copy, the paired-shear
checks can be repeated with
\begin{verbatim}
python3 -B seed.py --self-test
python3 -B check_suite.py
python3 -B check_examples.py
python3 -B verify_all.py example-N36.json
python3 -B verify_all.py example-N37.json
python3 -B verify_all.py example-N64.json
\end{verbatim}
Before running \texttt{check\_suite.py} in the writable copy,
remove the following four files:
\begin{verbatim}
CHECKS.json
certificate-N36.json
certificate-N37.json
certificate-N64.json
\end{verbatim}
The script refuses to overwrite existing output files. Running it in
the writable copy preserves the delivered records separately.
The seed and original matrix-verifier files have SHA-256 hashes
\begin{verbatim}
seed.py
a79bb00c65c10d4058009aea62bc14cfcd4c69f25fa910f2892b8e73797558d3
verify_all.py
61e38ba5fef1907871aee732d3f2d42b860a273a3c8305b99e9712e0a7ac83c2
\end{verbatim}
The supplement's \texttt{SHA256SUMS} records every delivered file.

\subsection{The finite-width certificate}

The proof-critical finite data for Lemma~\ref{lem:policies} are in
\texttt{TABLES.json}, in the supplement's \texttt{fixed-width} directory.
The file's SHA-256 hash is
\begin{verbatim}
50653e742db5efbc826407b69757a1fdfed9797d1781abd79796a228d9f15898
\end{verbatim}
The certificate lists all states in lexicographic order, the indices of
the admissible states, the initial states, their potential values and
every policy row. A row records the current state index, internal word,
mask, outgoing tuple index, successor index and cost. Bit position zero, the least significant bit of the stored word or
mask, represents physical column one.
The independent checker reconstructs all available links and verifies
the complete domains and every obligation in Lemma~\ref{lem:policies}.
From the \texttt{fixed-width} directory, run
\begin{verbatim}
python3 -B verify.py --tables
python3 -B run_checks.py
\end{verbatim}
The first command must return \texttt{PASS}, with exactly \(36,464\)
verified policy rows. This finite calculation is a premise of Theorem~\ref{thm:finitewidth}.
The theorem's proof uses the potential inequality and the grid-minor
construction to extend the verified properties to every height.

The second command regenerates a separate integration record and
\(27\) JSON output files. They cover the three supplied inputs of
degrees \(36,37,64\); three boundary cases at each physical width
\(6,\ldots,11\), with \((h,r)=(2,0),(2,1),(3,w-1)\); and the
six canonical endpoint sizes \(48,63,80,99,120,143\).
For the three boundary cases at each width, the diagonal words are,
respectively, the zero word, the all-one word and the physical
row-parity word. At each endpoint size, the diagonal bit at cell
\((i,j)\) is one exactly when \((5i+3j+ij)\bmod7\in\{0,1,4\}\).
All \(27\) outputs have independent similarity, degree, rule and
full-minor checks. Ten full-matrix characteristic polynomials are
recomputed: the three supplied inputs, the six endpoint outputs and
the auxiliary \(\Gamma_{2,6,0}\) output. The \(24\)
direct-diagonal cases carry no irreducibility or primitivity claim.
These integration tests check the implementation; the exhaustive policy
verification, rather than extrapolation from these outputs, supplies
the finite premise of the theorem.

\subsection{The all-width transport certificate}

The directory \texttt{transport} contains the single-gap construction,
its independent checker based on sets of matrix entries, and the finite
certificate used in Appendix~\ref{app:transport}. The file \texttt{ft/cover.json} has
SHA-256 hash
\begin{verbatim}
8a9d5799230c3317331dde9076a7f72643bd6773c70d4e122c39509774548dc3
\end{verbatim}
It stores the \(277\) contexts and cost vectors, the \(1072\)
labelled transitions, and the designated root. The \(94\) terminal
inequalities are checked at the nodes with the terminal contexts specified
in Appendix~\ref{app:transport}. From the \texttt{transport}
directory, run
\begin{verbatim}
python3 -B ft/verify.py
python3 -B verify_all.py
\end{verbatim}
The first command returns \texttt{PASS\_COMPLETE\_FT\_CERTIFICATE}.
It compares all \(4032\) local transition rows in two encodings,
then checks the initial vector, every certificate transition and every
eligible terminal. The maximum terminal value is four. The fixed
local potential contribution is \(v|b+1|\), for prefix balance
\(b\) and current bit \(v\); it is independent of width and of
the preceding bit. The proof of Lemma~\ref{lem:transport-row} uses
the verified inequalities and monotone induction, rather than an
extrapolation from tested widths.

As an implementation check, it also compares the dynamic program with
direct endpoint enumeration on all \(6400\) allowed inputs at widths
six, seven and eight. The included general-potential normalisation
calculation is not a premise of the transport bound.

The second command returns \texttt{PASS\_A2\_ASSEMBLY\_CHECKS}.
It repeats the finite certificate verification and checks \(192\)
general-layout outputs, \(224\) diagonal words on \(56\) canonical
sizes, and the three supplied polynomial examples. The independent
checker reconstructs matrix products using sets and symmetric differences,
checks every final support entry and degree, and verifies the path,
local rules and full minor certificates. It also checks the public
synthesis command and rejects three altered certificates. The recorded
output is \texttt{transport/CHECKS.json}; the three example matrices
are in \texttt{transport/examples}.

The constructor checks the identities of its fixed dependencies before
using them. Script time and memory limits govern reproduction and do
not restrict the theorem. The geometric routine accepts every binary
path diagonal; polynomial synthesis assumes irreducibility.

\subsection{Joint routing and the strict connection bound}
\label{app:strict-reproduction}

The directory \texttt{strict3n} contains the construction for
Theorem~\ref{thm:main}, its independent matrix checker and the
standard-start certificate. Its subdirectory \texttt{baseline\_even3n}
contains the joint routing algorithm and the exact certificate for
Lemma~\ref{lem:pair6}, together with the unchanged single-row
dependencies. These source files are retained byte for byte.
From \texttt{strict3n}, run
\begin{verbatim}
python3 -B baseline_even3n/pair/verify.py
python3 -B check_first3.py
python3 -B verify_all.py
python3 -B construction.py --polynomial 0x1000000801 \
    --output automaton.json
\end{verbatim}
The first command returns \texttt{PASS\_PAIR6\_CLOSED\_COVER} and
checks all \(11287\) nodes, \(86636\) edges and \(2098\)
terminal obligations of Appendix~\ref{app:pair6}. Its compressed
certificate \texttt{baseline\_even3n/pair/cover.jsonl.gz} has
SHA-256 hash
\begin{verbatim}
6e02b2e16d1489f815c6b2f0589cd805ef10827da450a04b29986e0c5a729d28
\end{verbatim}
The storage format records exact signed costs, with unlisted
coordinates interpreted as infinity. The checker reserves an integer
marker for infinity, validates every finite entry, and performs
transition arithmetic using arbitrary-precision integers. Compression
is lossless and changes none of the inequalities.

The second command returns \texttt{PASS\_FIRST3\_CLOSED\_COVER}
and checks all \(43\) nodes, \(172\) edges and \(38\) terminals
in Appendix~\ref{app:first3}. The file \texttt{first3-cover.json}
has SHA-256 hash
\begin{verbatim}
c3c7a102a1338113740b867b46a1cd23ee53bcea1ee4556b3dc8919df4f0014c
\end{verbatim}
It stores all vector coordinates, using JSON null for infinity. The
checker composes the independently encoded column transitions in the
order specified by \eqref{eq:first3-block}.

The third command checks the construction and returns
\begin{verbatim}
PASS_STRICT3N_NEW_INTERFACES
\end{verbatim}
It compares the standard-start optimiser with direct ordered-position
enumeration on \(320\) inputs at widths six and eight, or \(80\)
after irrelevant endpoint-bit duplicates are removed. It also checks
\(24\) boundary-family diagonals at sizes
\(48,80,120,168,224,288\), \(64\) instances of the other branches,
the three supplied polynomials and the public synthesis command.
The independent checker reconstructs the simultaneous similarity,
counts the actual support after cancellation, and verifies all degree,
path, local-rule and full-minor requirements. For the boundary-family
instances, it checks that gap zero is treated alone and the remaining gaps are grouped in consecutive
pairs, and rejects an incorrect grouping, a deleted
path direction and a damaged certificate terminal. The recorded
results are in \texttt{strict3n/CHECKS.json}, with the separate joint
certificate check in \texttt{strict3n/PAIR6-CHECKS.json}.
These finite construction tests check implementation correspondence;
the unrestricted result uses the inductions in
Appendix~\ref{app:transport} and the exact budgets in
Section~\ref{sec:transport}.

All synthesis routines assume that the supplied polynomial is
irreducible. The fixed primitive examples have separate arithmetic
certificates. The implementation does not include Shoup's
irreducible-polynomial construction used in Corollary~\ref{cor:factors},
and no running-time benchmark is reported for the certified-factor
algorithm. The finite-width tables cover only \(6\le w\le11\).

\section*{Code and data availability}
The companion package~\cite{ChenCertificates}, version v010, contains
the programs, exact finite certificates and fixed example outputs used
in this manuscript. It includes the paired-shear and finite-width
constructions, the single-gap transport certificate, and the joint
two-gap and standard-start certificates for the strict connection bound,
together with their independent checkers and complete fixed dependency
trees. The README files give the reproduction commands, and the archive
manifest records the SHA-256 hashes of the files. The LaTeX source includes
the TikZ code for all three schematic figures.

\section*{Declaration of generative AI and AI-assisted technologies}
OpenAI Codex was used to assist with manuscript organisation, literature searches, and revision of the English prose. Its use in the research is described in Appendix~\ref{app:verification}. The author reviewed the manuscript, references, and computational results and takes full responsibility for the content of the article.

\end{document}